\newtheoremstyle{theorem}
  {12pt}          
  {12pt}  
  {\sl}  
  {\parindent}     
  {\bf}  
  {. }    
  { }    
  {}     
\theoremstyle{theorem}
\newtheorem{theorem}{Theorem}
\newtheorem{corollary}[theorem]{Corollary}
\newtheorem{remark}[theorem]{Remark}
\newtheorem{proposition}[theorem]{Proposition}
\newtheorem{lemma}[theorem]{Lemma}
\newtheorem{definition}[theorem]{Definition}
\newcommand{\ic}{\ensuremath{\mathcal{I}}}
\newcommand{\gc}{\ensuremath{\mathcal{G}}}
\newcommand{\oc}{\ensuremath{\mathcal{O}}}
\newcommand{\ac}{\ensuremath{\mathcal{A}}}
\newcommand{\fc}{\ensuremath{\mathcal{F}}}
\newcommand{\ec}{\ensuremath{\mathcal{E}}}
\newcommand{\lc}{\ensuremath{\mathcal{L}}}
\newcommand{\nc}{\ensuremath{\mathcal{N}}}
\newcommand{\jc}{\ensuremath{\mathcal{J}}}
\newcommand{\Pt}{\mathbb{P}^3}
\newcommand{\Pun}{\mathbb{P}^1}
\newcommand{\Pn}{\mathbb{P}^n}
\newcommand{\Pk}{\mathbb{P}^k}
\newcommand{\bZ}{\mathbb{Z}}
\newcommand{\bC}{\mathbb{C}}
\newcommand{\aG}{\alpha}
\newcommand{\bG}{\beta}
\newcommand{\rG}{\rho}
\newcommand{\oG}{\omega}
\newcommand{\lG}{\lambda}
\newcommand{\fG}{\varphi}
\newcommand{\lag}{\langle}
\newcommand{\rag}{\rangle}
\newcommand{\bds}{\begin{displaystyle}}
\newcommand{\eds}{\end{displaystyle}}
\title[Buchsbaum index of rank two vector bundles.]{On the Buchsbaum index of rank two vector bundles on $\Pt$.}
\author{Philippe Ellia - Laurent Gruson}
\address{Dipartimento di Matematica, 35 via Machiavelli, 44100 Ferrara}
\email{phe@unife.it}
\address{L.M.V., Université de Versailles-St Quentin, 45 Av. des Etats-Unis, 78035 Versailles, France}
\email{Laurent.Gruson@uvsq.fr}
\subjclass[2010] {14J60} \keywords{Rank two vector bundles, projective space, Buchsbaum index, minimal monad.}
\begin{document}
\maketitle


\thispagestyle{empty}

{\it Dedicated to Emilia Mezzetti on her sixtieth birthday.}

\section{Introduction.} We work over an algebraically closed field of characteristic zero. Let $E$ be a rank two vector bundle on $\Pt$. The Buchsbaum index of $E$ is $b(E):= min\,\{k\mid \frak m^k.H^1_*(E)=0\}$ (in the literature one often says that $E$ is ''$k$-Buchsbaum''). By Horrock's theorem $b(E)=0$ if and only if $E$ is the direct sum of two line bundles. Then we have (see \cite{Ellia-Sarti}):

\begin{theorem}
\label{T-b<3}
Let $E$ be a normalized rank two vector bundle on $\Pt$.\\
(1) If $b(E)=1$, then $E$ is a null-correlation bundle.\\
(2) If $b(E)=2$, then $E$ is stable with $c_1=0, c_2=2$ (an instanton with $c_2=2$).
\end{theorem}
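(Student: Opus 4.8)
The plan is to analyze the Horrocks monad (minimal monad) of a normalized rank two bundle $E$ on $\Pt$. Recall that such a bundle admits a minimal monad whose display encodes $H^1_*(E)$ as the cohomology of a free module complex; the graded module $H^1_*(E)$ together with the maps induced by multiplication by linear forms controls the Buchsbaum behavior. The key invariant is the annihilator of $H^1_*(E)$ as a module over the polynomial ring $S = k[x_0,\dots,x_3]$, and $b(E)$ measures how many times the maximal ideal $\frak m$ must be applied to kill this module.

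First I would set up the minimal monad and express $b(E)$ in terms of the minimal free resolution of $H^1_*(E)$. The condition $b(E) \le 2$ says $\frak m^2 \cdot H^1_*(E) = 0$, which forces $H^1_*(E)$ to be a module supported only at the irrelevant maximal ideal and annihilated by the square of $\frak m$. This is a very strong structural restriction: the module has finite length and its multiplication maps $H^1(E(t)) \to H^1(E(t+1)) \to H^1(E(t+2))$ compose to zero. Next I would use the Riemann--Roch theorem and the stability/normalization hypotheses to bound the Chern classes. Normalization fixes $c_1 \in \{0, -1\}$; for $c_1 = -1$ the null-correlation-type analysis of part (1) already constrains things, so the main interest is $c_1 = 0$, where I would show the smallest admissible $c_2$ compatible with a nonzero but $\frak m^2$-annihilated cohomology module is $c_2 = 2$.

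The central computation is to pin down the dimension of $H^1(E(t))$ across the relevant twists and show that only $c_2 = 2$ survives. I would invoke the spectrum of $E$ (the Rao/cohomological invariants) and the fact that for stable bundles with $c_1 = 0$ the function $t \mapsto h^1(E(t))$ is symmetric and unimodal; the requirement that multiplication by two linear forms annihilates the module restricts the width of the nonzero range of this function to at most a couple of consecutive degrees. Combined with the numerical constraints from $\chi(E(t))$, this isolates $c_2 = 2$ and identifies $E$ as an instanton (i.e.\ a stable bundle with $c_1 = 0$, $c_2 = 2$, and $H^1(E(-2)) = 0$).

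The main obstacle I expect is ruling out the a priori possibility of higher $c_2$ with a cohomology module that is narrow in degree but still $\frak m^2$-torsion: one must show that stability plus the monad structure forbid a thin, tall cohomology module for $c_2 > 2$. Concretely, the difficulty lies in controlling the maps in the monad finely enough to exclude configurations where $h^1$ is concentrated in one or two degrees yet the module fails to be annihilated by $\frak m^2$ unless $c_2 = 2$; here I would lean on the classification of low-charge instantons and on Theorem \ref{T-b<3}(1) to handle the boundary cases, reducing everything to a finite check once the degree-width of $H^1_*(E)$ has been bounded.
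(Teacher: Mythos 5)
The paper itself does not prove Theorem \ref{T-b<3}: it quotes it from the Ellia--Sarti reference, so your sketch has to be judged against what a complete argument requires (and against the machinery this paper builds for the analogous case $b=3$). Measured that way, there are two genuine gaps. The first is that stability is part of the \emph{conclusion}, not a hypothesis: every tool you invoke --- the spectrum, the symmetry and unimodality of $t\mapsto h^1(E(t))$, Barth-type restriction, Theorem \ref{T-res} --- is only available for stable bundles, so your plan silently assumes what it must prove. A complete proof must first exclude normalized bundles that are not stable; this is exactly the role of an argument like Lemma \ref{L-h^1=0 unstable}: writing $-r$ ($r\geq 0$) for the least twist with a section, one gets $-2b+2r+6-c_1\leq 0$, which for $c_1\in\{0,-1\}$ forces $b\geq 3$, so no non-stable bundle has $b\leq 2$. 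Nothing in your sketch plays this role. Moreover part (1) is never actually proved, and your remark that ``for $c_1=-1$ the null-correlation-type analysis of part (1) already constrains things'' is off: a null-correlation bundle has $c_1=0$, $c_2=1$. Part (2) also requires ruling out \emph{stable} bundles with $c_1=-1$, which you do not address; for instance those with $c_1=-1$, $c_2=2$ have $b=3$ (Lemma \ref{c1=-1 c2=2 stable}), but that has to be shown, not assumed.

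The second gap is the pivotal step ``$\mathfrak{m}^2\cdot H^1_*(E)=0$ restricts the nonzero range of $h^1$ to a couple of consecutive degrees.'' This is unjustified, and as a general statement about graded modules it is false: the free inequality is $b(E)\leq d(E)$, not the reverse, and an $\mathfrak{m}^2$-torsion module can have nonzero components in many consecutive degrees with all multiplication maps vanishing. To bound the diameter in terms of $b$ one needs Buraggina's connectedness (Theorem \ref{T-Buraggina}) combined with control of the minimal generators of $H^1_*(E)$ (Lemma \ref{L-spe et gene H^1}) and the injection/kernel estimates of Proposition \ref{P-generalities b(E)} --- e.g.\ $h^1(E(-k))=0$ for $k\geq b$ and vanishing in degrees $\geq \alpha+b$, with $\alpha$ the top degree of a minimal generator. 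Only after these vanishings do Riemann--Roch counts such as $\chi(E(1))=8-3c_2$ actually isolate $c_2\leq 2$; without them your ``finite check'' never becomes finite. Finally, leaning on ``the classification of low-charge instantons and on Theorem \ref{T-b<3}(1)'' is circular, since part (1) is part of the statement under proof and is itself not established in your proposal. In short, what you have is a plausible road map, but the three load-bearing steps --- excluding non-stable bundles, converting $b\leq 2$ into a diameter bound, and the case $c_1=-1$ --- are all missing.
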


This classification is quite simple. However since every bundle is $k$-Buchsbaum for some $k$ it is clear that soon or later we will reach a point where the classification will be intractable. Since there were some echoes on Buchsbaum bundles during the conference we were curious to see if it was possible to push the classification a little bit further. Our result is as follows:

\begin{theorem}
\label{T-the thm}
let $E$ be a rank two vector bundle on $\Pt$ with $b(E)=3$. Then $E$ is stable and:\\
(i) if $c_1(E)=0$, $E$ is an instanton with $3 \leq c_2(E) \leq 5$. Moreover for any $3\leq c_2 \leq 5$, there exists an instanton, $E$, with $c_2(E)=c_2$ and $b(E)=3$.\\
(ii) if $c_1(E)=-1$, then $c_2=2$. Every stable bundle $E$ with $c_1=-1, c_2=2$ has $b(E)=3$.
\end{theorem}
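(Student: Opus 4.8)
The plan is to work throughout with the graded $R$-module $M := H^1_*(E)$ over $R=k[x_0,x_1,x_2,x_3]$, which has finite length because $E$ is locally free, and to read $b(E)=3$ as the single condition $\mathfrak m^3M=0\neq\mathfrak m^2M$. I will use repeatedly Serre duality in the form $H^2(E(l))\cong H^1(E(-c_1-l-4))^\vee$ (for rank two $E^\vee\cong E(-c_1)$), so that $H^2_*(E)$ is the graded dual of $M$ up to a shift; in particular $\mathfrak m^k$ kills $M$ iff it kills $H^2_*(E)$. First I would settle stability. Since $M\neq 0$, $E$ is not a sum of line bundles (Horrocks). If $E$ were not stable then, after normalization, $H^0(E)\neq 0$ and a section vanishes on a curve $Y$, giving $0\to\oc\to E\to\ic_Y(c_1)\to 0$; hence $M\cong H^1_*(\ic_Y)$ is the Rao module of $Y$ while $H^2_*(E)\cong H^1_*(\oc_Y)$, and local freeness forces $Y$ subcanonical with $\omega_Y\cong\oc_Y(c_1-4)$. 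A short analysis of this Serre correspondence (as in Theorem~\ref{T-b<3}) shows such a $Y$ is either arithmetically Cohen--Macaulay, whence $M=0$, or has Rao module of Loewy length $\neq 3$; either way $b(E)=3$ is impossible, so $E$ is stable.

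The heart of the argument, and what I regard as the main obstacle, is the claim that $M$ is concentrated in three consecutive degrees: $-1,0,1$ when $c_1=0$, and $-2,-1,0$ when $c_1=-1$. I would extract this from the minimal monad of $E$: the monad exhibits $M$ as generated in a single (bottom) degree with socle in its top degree, the stability and Serre-duality vanishings fix that bottom degree at $-1$ (resp.\ $-2$), and $\mathfrak m^3M=0$ together with $\mathfrak m^2M\neq 0$ then forces the width to be exactly three. The symmetry and absence of gaps in the spectrum of a stable bundle with $c_1=0$ (Barth--Elencwajg, Hartshorne) is precisely what prevents $M$ from occupying a longer connected range while remaining annihilated by $\mathfrak m^3$. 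Granting the claim, $h^1(E(-2))=0$, so for $c_1=0$ the bundle $E$ is an instanton.

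With the window in hand the Chern class is controlled by Riemann--Roch, $\chi(E(l))=2\binom{l+3}{3}-c_2(l+2)$ for $c_1=0$. Evaluating where cohomology is forced to vanish gives
\[
h^1(E(-1))=c_2,\qquad h^1(E(0))=2c_2-2,
\]
and, since the window stops at degree $1$, $h^1(E(2))=0$; then $h^0(E(2))=\chi(E(2))=20-4c_2\geq 0$ yields $c_2\leq 5$. On the other hand $b(E)=3>2$ excludes by Theorem~\ref{T-b<3} the null-correlation bundle ($c_2=1$) and the $c_2=2$ instanton, so $c_2\geq 3$. For $c_1=-1$ the same computation with $\chi(E(l))=\tfrac{1}{6}\bigl(3c_2(1-2c)+c(2c-1)(c-1)\bigr)$, $c=l+2$, gives $h^0(E(1))=\chi(E(1))=\tfrac{5}{2}(2-c_2)\geq 0$, whence $c_2\leq 2$, and integrality of $\chi(E(1))$ rules out $c_2=1$, forcing $c_2=2$.

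Finally I would prove sharpness by exhibiting bundles and computing $b$ directly. For each $c_2\in\{3,4,5\}$ one takes an instanton with minimal spectrum, so that $h^1(E(2))=0$; since $M$ is generated in degree $-1$ and $h^1(E(1))=h^0(E(1))+3c_2-8\geq 3c_2-8\geq 1$ is nonzero, one gets $\mathfrak m^2M\neq 0=\mathfrak m^3M$, i.e.\ $b(E)=3$, and such instantons exist for every $c_2\leq 5$. For $c_1=-1$ the classical irreducible family of stable bundles with $c_1=-1,c_2=2$ has $M$ of Hilbert function $(1,2,2)$ in degrees $(-2,-1,0)$, generated in degree $-2$, so $\mathfrak m^2M\neq 0=\mathfrak m^3M$ and $b(E)=3$ for every such bundle. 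I expect the genuine difficulty to lie entirely in the window claim; once it is established, the numerics and the existence statements are short.
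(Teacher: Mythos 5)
Your reduction of everything to the ``window claim'' is the right instinct, but the argument you offer for that claim is false, and this is precisely where the content of the theorem lies. You assert that the minimal monad exhibits $M=H^1_*(E)$ as generated in a single bottom degree, so that $\mathfrak{m}^3M=0\neq\mathfrak{m}^2M$ forces $M$ to occupy exactly three consecutive degrees. But single-degree generation is exactly what can fail: for $c_1=0$ with $h^1(E(-2))\neq 0$ the minimal monad has left-hand term $\oc (-2)\oplus (c_2-4).\oc (-1)$, i.e.\ generators in two distinct degrees (Lemma \ref{L-minMonad c1=0 -1}, Lemma \ref{L-reso -1^2}), and for $c_1=-1$ a negative instanton may have, besides its generators of degree $-1$, $c>0$ minimal generators of degree $0$ (the monad (\ref{eq: min monad})). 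When generation is spread over two degrees one only has $b(E)\le d(E)$; the equality $b(E)=d(E)$ of Proposition \ref{P-generalities b(E)}(1) requires one-degree generation, so $b(E)=3$ does not bound the width of $M$, and your vanishings $h^1(E(\pm 2))=0$ are unproven. Eliminating or bounding these extra generators is the bulk of the paper: it needs the restriction theorem \ref{T-res} ($h^0(E_H(1))\le 2+c_1$), the spectrum analysis for $c_1=0$, and for $c_1=-1$ the bound $c\le c_2/2-1$ on degree-zero generators (Proposition \ref{P-bd c n<4}, Corollary \ref{C-pf Prop}), which is a substantial result in its own right.

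Two further concrete failures. First, your window for $c_1=-1$ is misplaced: by Proposition \ref{P-generalities b(E)}(iv) (and by Hartshorne--Sols for $c_2=2$) the module sits in degrees $-1,0,1$ with Hilbert function $(1,2,1)$, not in degrees $-2,-1,0$ with $(1,2,2)$, and it is generated in degree $-1$, not $-2$. Consequently $h^1(E(1))$ need not vanish, the identity $h^0(E(1))=\chi(E(1))$ fails (for $c_2=2$ one has $\chi(E(1))=0$ but $h^0(E(1))=1$), and your derivation of $c_2\le 2$ collapses. The paper only gets $c_2\le 6$ from $\chi(E(3))\ge 0$ and must then eliminate $c_2=4$ (via Banica--Manolache together with the degree-zero generator bound) and $c_2=6$ (via the restriction analysis and Lemma \ref{L-coh ordre jline}) by separate, genuinely hard arguments that your proposal skips entirely. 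Second, the unstable case is asserted rather than proved: your dichotomy (``$Y$ is ACM or its Rao module has Loewy length $\neq 3$'') is a restatement of the conclusion, whereas the paper's Proposition \ref{P-unstable case b=3} needs Strano's theorem to obtain $-2b+2r+6-c_1\le 0$ and then the fact that the Rao module of a double line of arithmetic genus $-3$ has diameter $5$.
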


This answers a conjecture made in \cite{JM}. The main tools we use are a restriction theorem (Theorem \ref{T-res}) to control $h^0(E_H(1))$ in the stable case, some general properties (see Proposition \ref{P-generalities b(E)}) and a careful study of the minimal monad of Horrocks build from the minimal free resolution of the module $H^1_*(E)$.

In particular in Section \ref{sec-neg inst} we investigate the minimal monad of ''negative instantons'' (a negative instanton is a stable bundle $E$ with $c_1(E)=-1$ and $h^1(E(-2))=0$). Contrary to what happens in the case of ''positive'' instanton ($c_1=0$) the $H^1_*$ module is not necessarily generated by its elements of degree -1, some generators of degree zero may occur. If $c$ denotes the number of generators of degree zero, it is easy to show that $c \leq c_2/2$. In fact in a forthcoming paper (\cite{EG-2}) we prove:

\begin{theorem}
\label{T-c<n}
Let $E$ be a negative instanton with $c_2 \geq 2$. Then $c \leq c_2/2 -1$. Moreover if $c=c_2/2-1$, then $h^0(E(1)) =1$. Finally for every $c_2 \geq 2$ there exists a negative instanton with $c = c_2/2 -1$.
\end{theorem}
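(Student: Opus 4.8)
The plan is to read everything off the finite-length module $M:=H^1_*(E)$ over $S=k[x_0,\dots ,x_3]$ and its minimal free resolution, exploiting the self-duality forced by $E^\vee\cong E(1)$. First I would fix the numerics. Since $E$ is stable with $c_1=-1$ one has $h^0(E(t))=0$ for $t\le 0$, hence $h^3(E(t))=0$ for $t\ge -3$ by Serre duality, and $h^2(E(t))=h^1(E(-t-3))$; combining these with the instanton vanishing $h^1(E(-2))=0$, Riemann--Roch gives
$$ h^1(E(-1))=\tfrac{c_2}{2},\qquad h^1(E)=\tfrac{3c_2}{2}-1 $$
(so in particular $c_2$ is even). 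As $M_{-2}=0$, the module $M$ is generated in degrees $-1$ and $0$: the $c_2/2$ classes of $M_{-1}$ are minimal generators, and the number of degree-zero generators is exactly
$$ c=\dim_k\operatorname{coker}\bigl(\mu_{-1}\colon H^0(\mathcal{O}_{\Pt}(1))\otimes M_{-1}\to M_0\bigr). $$
The easy bound $c\le c_2/2$ is equivalent to $\operatorname{rank}\mu_{-1}\ge M_0-M_{-1}=c_2-1$, so the content of the theorem is one further unit of rank, together with the identification of the equality case.

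To gain that unit I would use the \emph{self-duality} of Horrocks' minimal monad attached to $M$: because $E^\vee\cong E(1)$, the monad is isomorphic to its own dual twisted by $\mathcal{O}(-1)$, so the minimal free resolution of $M$ and that of $H^2_*(E)\cong M^\vee(3)$ are interchanged by dualizing, and $\mu_{-1}$ is transposed to the ``top'' multiplication of $M$ under the pairing coming from Serre duality. I would then restrict to a general plane $H$: by Theorem \ref{T-res} the sheaf $E_H$ is semistable, so $h^0(E_H)=h^0(E_H(-1))=0$ and the plane module $H^1_*(E_H)$ is self-dual about degree $-1$. Comparing $\operatorname{coker}\mu_{-1}$ with $H^1(E_H)=\operatorname{coker}(\,\cdot h\colon M_{-1}\to M_0)$ and using this central symmetry is what must push $\operatorname{rank}\mu_{-1}$ from $c_2-1$ up to $c_2$, i.e. $c\le c_2/2-1$. \textbf{The main obstacle is precisely this last unit}: showing that the self-dual pairing carried by $\mu_{-1}$ cannot be maximally degenerate. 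I expect to argue by contradiction through Proposition \ref{P-generalities b(E)}: maximal degeneracy of $\mu_{-1}$ (the case $c=c_2/2$) would split off a summand, or produce a sub-line-bundle of $E$ of slope $\ge -\tfrac12$, contradicting stability.

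For the equality case $c=c_2/2-1$, the residual one-dimensional defect in the rank of $\mu_{-1}$ is, under the self-dual pairing, an isotropic line equal to its own annihilator. Tracing it back through the monad yields a distinguished extension class, hence a nonzero section of $E(1)$; and the same self-duality shows this isotropic line, and therefore the section, is unique, giving $h^0(E(1))=1$. Here the bookkeeping I must get right is that no \emph{further} degree-zero generator can coexist with this section, which again I would enforce via the semistability of $E_H$ and the central symmetry of $H^1_*(E_H)$.

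Finally, for existence I would, for each even $c_2\ge 2$, produce an extremal bundle by the Serre correspondence from a suitable curve $C$ (equivalently, by writing down an explicit self-dual monad whose cohomology is locally free of rank two), built so as to carry exactly the section of $E(1)$ predicted above. It then remains to verify directly on this family that $h^1(E(-2))=0$ and that $H^1_*(E)$ has precisely $c_2/2-1$ minimal generators in degree $0$. Choosing the curve (or the monad maps) so that these two cohomological identities hold simultaneously is the delicate point of the construction, but once the shape is fixed the verification is a routine Riemann--Roch and restriction computation.
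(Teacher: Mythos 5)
Your proposal is an outline, not a proof: each of the three assertions of the theorem is reduced to a step that you name but never carry out. For the bound $c\le c_2/2-1$ you correctly translate the problem into showing $\operatorname{rank}\mu_{-1}\ge c_2$ (one unit more than the easy bound), but at exactly that point you write that the central symmetry of $H^1_*(E_H)$ ``is what must push'' the rank up, flag this yourself as the main obstacle, and offer only the expectation that maximal degeneracy of $\mu_{-1}$ ``would split off a summand, or produce a sub-line-bundle of slope $\ge -1/2$''. No mechanism is given for extracting a destabilizing subsheaf (or a splitting) from degeneracy of the multiplication map; that implication \emph{is} the content of the theorem, so the argument has a hole precisely where the difficulty sits. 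The equality case has the same status: you assert that the one-dimensional rank defect is an isotropic line whose trace through the monad ``yields'' a section of $E(1)$, but you neither construct that section nor prove it is nonzero (for $c_2\ge 4$ the inequality $h^0(E(1))\le 1$ is already Theorem \ref{T-res}, so the real claim is non-vanishing). Finally the existence statement is pure intent: choosing a curve, or a self-dual monad, realizing simultaneously $h^1(E(-2))=0$ and exactly $c_2/2-1$ minimal generators in degree $0$ is the whole problem, and you defer it to an unspecified ``routine'' verification.

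For calibration: this paper does not prove Theorem \ref{T-c<n} either --- the general statement is explicitly deferred to the forthcoming paper \cite{EG-2} --- and what is proved here is only the range needed for Theorem \ref{T-the thm}, namely Proposition \ref{P-bd c n<4} via Corollary \ref{C-pf Prop} (the argument actually covers $4\le c_2\le 8$). That argument is concrete and quite different from your sketch: from the display of the minimal monad (\ref{eq: min monad}) one extracts $\tilde\beta\colon c.\oc(-1)\hookrightarrow (c+n+1).\oc$ and the image sheaf $J\subset n.\oc(1)$ of $\psi$; the self-duality $E^*(-1)\simeq E$ enters only to show that $\operatorname{Coker}(\tilde\beta)$ is locally free (Lemma \ref{L-cok bG tilde loc free}); then the kernel $K$ of $(c+n+1).\oc\twoheadrightarrow J$ satisfies $c\le h^0(K(1))\le c+1$ (Lemma \ref{L-h0 K,N,E}), and the elementary section bound of Lemma \ref{L-F dans n.O Pk} gives $4n+3c+3\le 10\,\operatorname{rk}(J)$, while the case $\operatorname{rk}(J)=n$ is settled by Corollary \ref{C-rkJ=n}, whose boundary case $c=n-1$ (where $K\simeq c.\oc(-1)\oplus\oc(-1)$) is also what produces the section of $E(1)$. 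This numerical squeeze only works for $n\le 4$, which is exactly why the authors leave the general case to \cite{EG-2}. So your duality idea is in the same spirit as the paper's use of the monad, but to salvage your attempt you would have to make the ``degeneracy implies destabilization'' step precise --- and the evidence of the paper is that no such soft argument is known even to the authors.
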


However to prove Theorem \ref{T-the thm}, we need this result just for $c_2 \leq 6$. So to keep this paper self-contained we will prove this particular case with an ad-hoc argument (see Proposition \ref{P-bd c n<4}, Corollary \ref{C-pf Prop}).

To conclude let us make this curious remark: every vector bundle $E$, with $1 \leq b(E) \leq 3$ is an instanton (positive or negative).


\section{Generalities.}

A first bound on the Buchsbaum index of $E$ is given by the diameter of $H^1_*(E)$:

\begin{definition}
\label{D-diameter}
The diameter of the indecomposable rank two vector bundle $E$ is $d(E):= c-c'+1$, where $c = max\{k \mid h^1(E(k)) \neq 0\}$, $c'=min\{k \mid h^1(E(k)) \neq 0\}$.
\end{definition}

We have (see \cite{Buraggina}):

\begin{theorem}
\label{T-Buraggina}
Let $E$ be a rank two vector bundle on $\Pt$, then $H^1_*(E)$ is connected (i.e. if $h^1(E(k))=0$ for some $k > c'$, then $h^1(E(m))=0$ for $m \geq k$).
\end{theorem}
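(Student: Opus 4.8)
The statement is equivalent to saying that the finite length graded module $M:=H^1_*(E)$ over $R=\bigoplus_k H^0(\oc_{\Pt}(k))$ has support an interval of integers. It therefore suffices to rule out a configuration of degrees with $M_{k-1}\neq 0$, $M_k=0$ and $M_m\neq 0$ for some $m>k$: once such a thing is excluded, the first internal zero above $c'$ propagates upward and forces all higher pieces to vanish. Two structural inputs are available for a rank two bundle and I would use both throughout: $M$ has finite length because $E$ is locally free, and the self-duality $E^*\cong E(-c_1(E))$ together with Serre duality on $\Pt$ gives $H^2(E(k))\cong H^1(E(-c_1(E)-k-4))^*$, so that $H^2_*(E)$ is, up to a twist, the graded dual of $M$.

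The basic tool is restriction to a general plane $H\cong\Ptw$. From $0\to E(k-1)\xrightarrow{\,\cdot h\,}E(k)\to E_H(k)\to 0$ one extracts
\[
H^0(E_H(k))\xrightarrow{\ \partial\ }M_{k-1}\xrightarrow{\ \cdot h\ }M_k\xrightarrow{\ r\ }N_k\xrightarrow{\ \partial'\ }H^2(E(k-1)),
\]
where $N:=H^1_*(E_H)$. This yields two one-sided principles. First, if $N_k=0$ then $r=0$, hence $\cdot h\colon M_{k-1}\to M_k$ is surjective and $M_{k-1}=0$ forces $M_k=0$. Second, if $H^0(E_H(k))=0$ then $\partial=0$, hence $\cdot h$ is injective and $M_k=0$ forces $M_{k-1}=0$. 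To make the ranges explicit I would restrict further to a general line $\ell$, where $E_\ell\cong\oc(a)\oplus\oc(b)$ with $a\ge b$, $a+b=c_1(E)$: then $H^0(E_\ell(k))=0$ for $k\le -a-1$, which propagates to $H^0(E_H(k))=0$ in that range, so the second principle gives the connectedness of $M$ from below for all $k\le -a-1$. Dually, Castelnuovo--Mumford regularity (equivalently the vanishing of $N$ above its top degree, once connectedness on $\Ptw$ is known) makes the first principle give connectedness of $M$ from above for all sufficiently large $k$.

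The crux is then the connectedness of $N$ on $\Ptw$, which I would attack by the same descent, restricting $E_H$ to a general line $\ell$. Here the essential non-formal input appears:
\[
H^1(E_\ell(k))=H^1(\oc(a+k))\oplus H^1(\oc(b+k))
\]
has ray-shaped, hence connected, support, and the resulting sharp threshold breaks the circularity that the self-duality $N_k\cong N^*_{-c_1(E)-3-k}$ on $\Ptw$ produces on its own. Exactness of $N_{k-1}\xrightarrow{\cdot h}N_k\to H^1(E_\ell(k))\to H^2(E_H(k-1))$ shows $\cdot h\colon N_{k-1}\to N_k$ is surjective for $k\ge -b$, giving connectedness of $N$ from above down to degree $-b-1$; the self-duality on $\Ptw$ transports this to connectedness from below up to degree $-a-2$.

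The main obstacle I anticipate is the remaining middle band $-a-1\le k\le -b-2$, which is nonempty precisely when the generic splitting type is unbalanced, $a>b$. There the two symmetric halves no longer overlap, this is exactly the range in which $H^1$ and $H^2$ both contribute, and neither surjectivity nor injectivity of $\cdot h$ is available for free; resolving it requires a finer tracking of the connecting maps $\partial,\partial'$ in the restriction sequences on $\Ptw$ and $\Pun$, showing that the surviving summand of $H^1(E_\ell(k))$ cannot be annihilated and hence that $N_k$ does not vanish inside the band. This is the delicate heart of the argument; once it is settled, feeding the resulting top degree of $N$ back into the first principle and combining with the bottom estimate from the second principle assembles the connectedness of $M=H^1_*(E)$.
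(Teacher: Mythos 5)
First, a remark on the comparison you are being judged against: the paper does not prove this theorem at all. It is quoted from \cite{Buraggina}, where it is obtained by liaison-theoretic (biliaison) techniques for reflexive sheaves. So any argument you give must stand entirely on its own, and yours does not: by your own closing admission (``this is the delicate heart of the argument''), the central step is deferred rather than proved, which already makes the proposal a strategy outline, not a proof.

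The more serious issue is that the unproved part is larger than where you locate it. Your two principles exclude an internal zero of $M=H^1_*(E)$ at degree $k$ only if either (a) $h^0(E_H(j))=0$ for \emph{every} $j\le k$, so the zero propagates all the way down, or (b) $N_j=0$ for \emph{every} $j>k$, so it propagates all the way up. For semistable normalized $E$, Grauert--M\"ulich forces the generic splitting type to be balanced ($a=0$, $b\in\{0,-1\}$), so the line-restriction argument gives (a) at best for $k\le 0$ (Barth's theorem at $k=0$); Theorem \ref{T-res} bounds $h^0(E_H(1))$ but does not make it vanish, so (a) stops there. On the other side, (b) requires $k\ge \sigma_+:=\max\{j : h^1(E_H(j))\neq 0\}$, and Riemann--Roch on the plane gives (for $c_1=0$) $\chi(E_H(j))=(j+1)(j+2)-c_2$, hence $h^1(E_H(j))\neq 0$ whenever $(j+1)(j+2)<c_2$; thus $\sigma_+$ grows like $\sqrt{c_2}$. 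Consequently, even granting full connectedness of $N=H^1_*(E_H)$, and even when your ``middle band'' $-a-1\le k\le -b-2$ is empty (balanced splitting), there remains an uncovered range of degrees roughly $1\le k<\sigma_+$, of length unbounded in $c_2$, where $h^1(E_H(k))\neq 0$ and $h^0(E_H(j))$ need not vanish for all $j\le k$: neither principle applies, and hypothetical internal zeros of $M$ --- which, for bundles whose $H^1_*$ reaches high degrees, would sit exactly in such a range --- are not excluded. So your final claim, that settling the plane case ``assembles the connectedness of $M$,'' is false as stated; and for unstable bundles (which the theorem also covers) the splitting type is unbalanced, so even your own band $[-a-1,-b-2]$ is unbounded. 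Restriction arguments of this kind are precisely what the paper itself exploits (Proposition \ref{P-generalities b(E)}, Lemma \ref{L-h^1=0 unstable}), and they only ever yield such one-sided statements; Buraggina's theorem requires a genuinely different global input, which in \cite{Buraggina} comes from liaison.
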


It follows that the diameter counts the number of non-zero (successive) pieces in the module $H^1_*(E)$ and that $b(E) \leq d(E)$.
\medskip

The following result, which may be considered as a complement to Barth's restriction theorem, will play an important role:

\begin{theorem}
\label{T-res}
Let $E$ be a stable, normalized, rank two vector on $\Pt$ with $c_2 \geq 4$. If $H$ is a general plane then: $h^0(E_H(1)) \leq 2+c_1$. In particular $h^0(E(1)) \leq 2+c_1$.
\end{theorem}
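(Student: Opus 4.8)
The plan is to reduce to a general plane and then to analyze the zero locus of a section. First, by the restriction theorem for stable rank two bundles on $\Pt$ (Barth, in characteristic zero), $E_H$ is again stable for $H$ general, and after normalizing $c_1\in\{0,-1\}$. Since a stable normalized bundle has $h^0(E_H)=0$, the restriction sequence $0\to E\to E(1)\to E_H(1)\to 0$ gives $h^0(E(1))\le h^0(E_H(1))$, so the closing ``in particular'' follows once the bound for $E_H(1)$ is proved. Everything therefore reduces to showing $h^0(E_H(1))\le 2+c_1$ on $\Ptw\cong H$.

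Next I would use stability of $E_H(1)$, whose slope is $(c_1+2)/2\le 1$. A nonzero $s\in H^0(E_H(1))$ cannot vanish along a divisor, since a sub-line-bundle $\oc(e)\hookrightarrow E_H(1)$ forces $e\le 0$; hence $(s)_0=Z$ is $0$-dimensional of length $c_2(E_H(1))=c_2+c_1+1$, and the Koszul sequence $0\to\oc\xrightarrow{\,s\,}E_H(1)\to\ic_Z(c_1+2)\to 0$ yields $h^0(E_H(1))=1+h^0(\ic_Z(c_1+2))$. Suppose for contradiction that $h^0(E_H(1))\ge 3+c_1$. For $c_1=-1$ this produces a line through $Z$, which is unique and contains $Z$ because $\deg Z=c_2\ge 4>1$. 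For $c_1=0$ it produces a pencil of conics through $Z$ with $\deg Z=c_2+1\ge 5>4$, so by Bezout the pencil has a fixed line $\ell$ and $Z$ lies on $\ell$ off at most one point. In both cases there is a line $\ell$ with $\deg(Z\cap\ell)\ge c_2$.

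Restricting $s$ to $\ell$ (the restriction is nonzero, as $s$ has no divisorial zeros) gives a section of $E(1)|_\ell$ vanishing at $\ge c_2$ points, that is, a sub-line-bundle of degree $\ge c_2$. Hence $E|_\ell=\oc_\ell(a)\oplus\oc_\ell(c_1-a)$ with $a\ge c_2-1$: the line $\ell$ is a jumping line of $E$ of order at least $c_2-1\ge 3$.

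The main obstacle, and the true content of the theorem, is to exclude such a line for \emph{general} $H$. In the Grassmannian $\mathbb{G}(1,3)$ of lines in $\Pt$ (dimension $4$), the lines lying in a fixed plane form a $2$-plane, while the jumping lines of $E$ of order $\ge c_2-1$ form a closed subset $J$. Since the planes through a fixed line form a $\bP^1$, the incidence variety $\{(\ell,H):\ell\subset H,\ \ell\in J\}$ has dimension $\dim J+1$; if it dominated the space $\check{\bP}^3$ of planes we would get $\dim J\ge 2$. It therefore suffices to prove $\dim J\le 1$, i.e. that requiring jumping order $\ge c_2-1$ imposes codimension $\ge 3$ in $\mathbb{G}(1,3)$, which is exactly where $c_2\ge 4$ should enter, through the cohomological description of the jump (the integer $h^0(E(-1)|_\ell)$ grows with the order) and a count of conditions. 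I expect this quantitative jumping-line estimate to be the hard and delicate point. That genericity of $H$ cannot be dropped, and that no purely planar argument can work, is shown by the stable bundle $F=\ker(\oc^2\to\oc_\ell(c_2))$ on $\Ptw$, which has $c_1=-1$, $c_2(F)=c_2$ and $h^0(F(1))\ge 2$ (from $0\to F(1)\to\oc(1)^2\to\oc_\ell(c_2+1)\to 0$, the middle map having image of dimension $\le 4$); the role of the estimate in the last paragraph is precisely to show that such bundles do not arise as general plane sections of $E$.
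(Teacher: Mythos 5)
Your proposal is not a proof but a reduction, and the step you defer is the entire theorem. The first part of your argument is sound: by stability a nonzero section of $E_H(1)$ vanishes in codimension two, the Koszul sequence gives $h^0(E_H(1))=1+h^0(\ic_Z(c_1+2))$, Bezout then produces a line $\ell\subset H$ with $\deg (Z\cap \ell)\geq c_2$, hence $E_\ell\simeq \oc _\ell(a)\oplus\oc _\ell(c_1-a)$ with $a\geq c_2-1\geq 3$, and the incidence count correctly shows that the theorem is equivalent to the bound $\dim J\leq 1$ for the locus $J\subset \mathbb{G}(1,3)$ of jumping lines of order $\geq c_2-1$. Your closing observation is also correct and valuable: the bundle $F=\ker (2.\oc \to \oc _\ell(c_2))$ on $\Ptw$ is stable with $c_1=-1$, $c_2(F)=c_2$ and $h^0(F(1))\geq 2$, so no argument using only the stability of $E_H$ (i.e.\ Barth restriction plus planar considerations) can possibly prove the theorem; the genericity of $H$ with respect to $E$ must enter in an essential way.

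The genuine gap is that the bound $\dim J\leq 1$ is never established: you state it as the target, observe that $c_2\geq 4$ "should enter" there, and stop. This is not a routine "count of conditions". The locus $J$ is a degeneracy locus (where $h^0(E_\ell(-1))$, resp.\ $h^0(E_\ell)$, jumps by $c_2-1$), and degeneracy loci only have their \emph{expected} codimension under genericity hypotheses that are exactly what is in question here; stable bundles do possess excess families of jumping lines in general, so one needs a quantitative argument, specific to stable rank two bundles with $c_2\geq 4$, to exclude a two-dimensional family of jumping lines of order $\geq c_2-1$. Nothing in your text supplies such an argument. Note that the paper itself gives no proof either: its "proof" is the citation \cite{EG}, an entire separate paper devoted to precisely this restriction theorem, which is a fair indication that the step you label "hard and delicate" is the substantive mathematical content rather than a finishing touch. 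As it stands, your proposal should be regarded as a correct reformulation of the theorem (failure of the bound for general $H$ forces a $\geq 2$-dimensional family of high-order jumping lines), together with a good explanation of why weaker methods must fail — but not as a proof.
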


\begin{proof} See \cite{EG}.
\end{proof}

Here we collect some general properties:

\begin{proposition}
\label{P-generalities b(E)}
Let $E$ be a normalized, rank two vector bundle on $\Pt$.
\begin{enumerate}
\item Assume all the minimal generators of $H^1_*(E)$ are concentrated in one and the same degree (i.e. $H^1(E(c'))$ generates $H^1_*(E)$)). Then $d(E)=b(E)$.\\
\item Let $\aG$ be the greatest degree of a minimal generator of $H^1_*(E)$. Then $h^1(E(n))=0$, if $n \geq \aG+b(E)$.\\
\item If $E$ is stable, then $h^1(E(-k))=0$ for $k \geq b(E)$. Moreover $h^1(E(-b+1)) \leq h^0(E_H(1))$; if $c_2 \geq 4$ then: $h^1(E(-b+1))\leq h^0(E_H(1)) \leq 2+c_1$ ($H$ a general plane).\\
\item If $E$ is stable with $c_1=-1, c_2\geq 4$ and if $b(E) \geq 3$, then $h^1(E(-b+1))=0$.
\end{enumerate} 
\end{proposition}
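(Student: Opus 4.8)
The plan is to argue by contradiction, refining the estimate of part (3). By Proposition \ref{P-generalities b(E)}(3), with $c_1=-1$ and $c_2\ge 4$ we have $h^1(E(-b+1))\le h^0(E_H(1))\le 2+c_1=1$ for a general plane $H$. Suppose, for contradiction, that $h^1(E(-b+1))=1$. Write $S$ for the homogeneous coordinate ring of $\Pt$, $\mathfrak{m}$ for its irrelevant ideal, and set $M:=H^1_*(E)$. Since $h^1(E(-b))=0$ by part (3), the degree $-b+1$ is the initial degree of $M$, the line $M_{-b+1}=H^1(E(-b+1))$ is spanned by a single minimal generator $g$, and the displayed inequalities force $h^0(E_H(1))=1$ for the general $H$.

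First I would measure how far $g$ propagates. From the twisted restriction sequences $0\to E(k)\to E(k+1)\to E_H(k+1)\to 0$ one sees that, for a general linear form $\ell$ (with $H=\{\ell=0\}$), the connecting map identifies $\ker(\ell\colon M_k\to M_{k+1})$ with a quotient of $H^0(E_H(k+1))$. For general $H$ the bundle $E_H$ is semistable on $H\cong\Ptw$ by Barth's theorem, hence stable since $c_1$ is odd; thus $h^0(E_H(k+1))=0$ whenever $k\le -1$, and multiplication by $\ell$ is injective on every $M_k$ with $-b+1\le k\le -1$, so $\ell^{\,b-1}g\ne 0$ in $M_0$. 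As $\mathfrak{m}^{b}M=0$, this element lies in $\ker(\ell\colon M_0\to M_1)$, whose dimension is at most $h^0(E_H(1))=1$; therefore $\ker(\ell\colon M_0\to M_1)=\langle \ell^{\,b-1}g\rangle$ is exactly one–dimensional, and likewise $S_{b-1}\cdot g\subseteq\ker(\ell\colon M_0\to M_1)$ is at most a line. A Riemann–Roch computation (using $h^0(E)=0$ and, when $b=3$, $h^2(E)=h^1(E(-3))=0$) shows $\dim M_0$ to be large — e.g. $\dim M_0=\tfrac32 c_2-1$ when $b=3$ — so $g$ is very far from generating $M$, and $M$ must carry further minimal generators in degrees $>-b+1$.

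The decisive step is to transport this rigidity into the minimal monad of Horrocks attached to $E$, built from the minimal free resolution of $M$. The unique bottom generator in degree $-b+1$ contributes a distinguished summand $\oc(b-1)$ to the display, and the self–duality of the monad, $B\cong B^\vee(c_1)$ with $c_1=-1$, forces the dual summand $\oc(-b)$; through the connecting maps of the restriction sequences the maps attached to these summands are controlled by the sections of $E_H(1)$. With only the single section available ($h^0(E_H(1))=1$), I would show that minimality of the monad is incompatible with having $c_2\ge 4$ and Buchsbaum index exactly $b$: either a second independent section of $E_H(1)$ is produced — contradicting the sharp bound $h^0(E_H(1))\le 2+c_1=1$ of Theorem \ref{T-res} — or $g$ is already annihilated by $\mathfrak{m}^{\,b-1}$, which lowers the index below $b$. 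Either alternative contradicts our assumptions, giving $h^1(E(-b+1))=0$.

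The main obstacle is exactly this last step. The estimates on the multiplication maps and the Riemann–Roch data are, by themselves, consistent with $h^1(E(-b+1))=1$ and do not close up into a contradiction, so the vanishing genuinely exploits the finer self–dual structure of the monad together with the sharpness of Theorem \ref{T-res} for $c_1=-1$. The delicate part is the monad bookkeeping: tracking the degrees of the generators and relations of $M$ and of their duals, and the ranks of the induced maps between the line–bundle summands, so as to pin down the contradiction.
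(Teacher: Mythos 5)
Your proposal only addresses part (4), taking parts (1)--(3) as given (you cite the proposition itself for the bound $h^1(E(-b+1))\le h^0(E_H(1))\le 1$); since those parts are also in the statement, they would need their own (short) proofs, but this is a minor point compared with what follows.

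For part (4) your setup is correct and in fact reproduces the first half of the paper's argument: injectivity of $\ell\colon M_k\to M_{k+1}$ for $k\le -1$ (via Barth and the restriction sequence), the fact that $\ell^{b-1}g\ne 0$ lies in $\ker(\ell\colon M_0\to M_1)$, and the bound $\dim\ker(\ell\colon M_0\to M_1)\le h^0(E_H(1))=1$. But at this point you have only produced a \emph{one}-dimensional subspace of a kernel that is allowed to be one-dimensional, and, as you yourself concede, the argument never closes: the proposed escape into the minimal monad is not carried out, and one of its two alternatives is not even a contradiction --- if $\mathfrak{m}^{b-1}g=0$ this does \emph{not} lower $b(E)$, since the Buchsbaum index is computed from the whole module $M$, which (as your own Riemann--Roch count shows) has many generators besides $g$. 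So the decisive step is genuinely missing.

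The missing idea is elementary and requires no monad: one must show that the multiplication map $V\otimes\langle g\rangle\to M_{-b+2}$ has image $W$ of dimension at least \emph{two}. The paper gets this from a general \emph{line} $L$ rather than a single general plane: combining $0\to\ic_L\to\oc\to\oc_L\to 0$ with the Koszul resolution $0\to\oc(-2)\to 2.\oc(-1)\to\ic_L\to 0$, twisted by $E(-b+2)$, and using $h^1(E(-b))=0$ (part (3)) together with $h^0(E_L(-b+2))=0$ (here $b\ge 3$ and $E_L\simeq\oc_L\oplus\oc_L(-1)$ for $L$ general are used), one obtains an injection $2.H^1(E(-b+1))\hookrightarrow H^1(E(-b+2))$, i.e.\ two general linear forms act \emph{jointly} injectively on $g$, so $\dim W\ge 2$. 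Then $\ell^{\,b-2}W\subseteq\ker(\ell\colon M_0\to M_1)$ is $\ge 2$-dimensional by exactly the injectivity you established, contradicting your bound $\dim\ker(\ell\colon M_0\to M_1)\le 1$. In short: your single-form propagation can never beat the bound $1$; the two-form (general line) refinement is what produces the contradiction, and with it your argument closes without any appeal to the self-dual monad.
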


\begin{proof} (1) Assume $H^1(E(k))$ generates $H^1_*(E)$, then $c'=k$. The natural map $H^1(E(k))\otimes S^{c-k}(V) \to H^1(E(c))$ is surjective and non-zero. It follows that $b(E)=c-c'+1=d(E)$.\\
(2) We have the minimal free resolution: $...\to \bigoplus  S(-a_i) \oplus k .S(-\aG ) \to H^1_*(E) \to 0$, where $a_i < \aG$. Twisting by $\aG +b$ and using the fact that $\frak m^b.\xi =0$ for any generator $\xi$, we get $H^1(E(\aG +b))=0$. We conclude with Theorem \ref{T-Buraggina}.\\
(3) It is enough to show $h^1(E(-b))=0$ ($b=b(E)$). Since $h^0(E_H)=0$ by Barth's theorem if $H$ is a general plane, we have an injection $H^1(E(-b)) \stackrel{.H^b}{\hookrightarrow} H^1(E)$. Since $.H^b=0$, $h^1(E(-b))=0$.

In the same way we have an injection $H^1(E(-b+1))\stackrel{.H^{b-1}}{\hookrightarrow} H^1(E)$. Composing with $H^1(E) \stackrel{.H}{\to} H^1(E(1))$ we must get zero, so the image of $H^1(E(-b+1))$ in $H^1(E)$ is contained in the kernel $K_H$ of $H^1(E) \stackrel{.H}{\to} H^1(E(1))$. Since $H^0(E_H(1))$ surjects $K_H$ we get $h^0(E_H(1)) \geq h^1(E(-b+1))$. We conclude with Theorem \ref{T-res}.\\
(4) If $h^1(E(-b+1))\neq 0$, by (3) $h^1(E(-b+1))=1$. Let $L$ be a general line. By combining $0 \to \ic _L \to \oc \to \oc _L\to 0$ and $0 \to \oc (-2) \to 2.\oc (-1) \to \ic _L \to 0$ twisted by $E(-b+2)$ we get:
$$\begin{array}{ccccccccc}
 & & & & &  &0 & & \\
 & & & & &  &\downarrow & & \\
0 & \to & E(-b) & \to& 2.E(-b+1) & \to & \ic _L\otimes E(-b+2)& \to & 0\\
 & & & & &  &\downarrow & & \\
 & & & & &  &E(-b+2) & & \\
 & & & & &  &\downarrow & & \\
 & & & & &  &E_L(-b+2) & & \\
 & & & & &  &\downarrow & & \\
 & & & & &  &0 & & \end{array}$$
Taking cohomology since $h^1(E(-b))=0$ and $h^0(E_L(-b+2))=0$ (indeed $E_L(-b+2) \simeq \oc _L(-b+2)\oplus \oc _L(-b+1)$), we get $2.H^1(E(-b+1)) \hookrightarrow H^1(E(-b+2))$. It follows that the map $H^1(E(-b+1))\otimes V \to H^1(E(-b+2))$ has an image, $W$, of dimension at least two. Now we have an injective map $H^1(E(-b+2)) \stackrel{.H^{b-2}}{\hookrightarrow} H^1(E)$. So $W':= .H^{b-2}(W) \subset H^1(E)$ has dimension at least two. Since $W'$ has to be contained in the kernel, $K_H$, of $H^1(E) \stackrel{.H}{\to}H^1(E(1))$ and since $h^0(E_H(1)) \geq \dim (K_H)$, we get a contradiction (see (3)).      
\end{proof}

We recall the following fact (see for instance \cite{Ha-Rao} Prop. 3.1, this is stated for $c_1=0$ but works also for $c_1=-1$):

\begin{lemma}
\label{L-spe et gene H^1}
Let $E$ be a stable, normalized, rank two vector bundle on $\Pt$. Let $\{k_i\}$ be its spectrum. Set $k_+=max\,\{k_i\}$. Then $H^1_*(E)$ is generated in degrees $\leq k_+-c_1-1$.\\
Let $\rho (k)$ denote the number of minimal generators of $H^1_*(E)$ in degree $k$, then: $\rho (-1-j)\leq s(j)-1$, for $0\leq j \leq k_+$ (here $s(j) =\#\,\{j\mid k_i=j\}$.
\end{lemma}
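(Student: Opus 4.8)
The plan is to read both assertions off the spectrum by restricting $E$ to a general plane $H$ and comparing multiplication maps. Recall that the spectrum $\{k_i\}$ is constructed by restricting $E$ to a general plane and then to a general line $L\subset H$, on which $E_L\simeq \oc_L(a)\oplus\oc_L(b)$ with $a+b=c_1$ and $|a-b|\le 1$; the integers $k_i$ record how the intermediate cohomology is distributed along the flag $L\subset H\subset\Pt$, so that the spectrum computes $h^1(E_H(l))$ for general $H$. The basic mechanism is the restriction sequence $0\to E(-1)\to E\to E_H\to 0$: twisting by $l$ and taking cohomology, the cokernel of multiplication by the equation $h$ of $H$ on $H^1(E(l-1))\to H^1(E(l))$ injects into $H^1(E_H(l))$. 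Since the number $\rho(l)$ of minimal generators of $H^1_*(E)$ in degree $l$ is the dimension of the cokernel of the \emph{full} map $V\otimes H^1(E(l-1))\to H^1(E(l))$, and this is a quotient of the cokernel of $\cdot h$ for a general $h\in V$, I obtain the master inequality $\rho(l)\le h^1(E_H(l))$, which I will feed the spectral data into.

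First I would prove the generation statement, i.e. that $\rho(l)=0$ for $l>k_+-c_1-1$. Here the single-form inequality is not enough (already for instantons $h^1(E_H(l))$ may be nonzero past the threshold while $\rho(l)=0$), so I must show that the full multiplication $V\otimes H^1(E(l-1))\to H^1(E(l))$ is surjective. I would descend to the general line: the top spectral value being $k_+$ means that beyond the threshold every class of $E_L$, through the split bundle $\oc_L(a)\oplus\oc_L(b)$, is already a product of a linear form with a class of lower degree, and I would push this surjectivity up the two restriction sequences (line into plane, plane into $\Pt$) by a descending induction on $l$. The vanishing that starts the induction is the Castelnuovo-type vanishing read from the spectrum above $k_+$, and Theorem~\ref{T-Buraggina} guarantees there are no gaps to break the induction.

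For the refined inequality I would compute $h^1(E_H(-1-j))$ from the spectrum and insert it into $\rho(-1-j)\le h^1(E_H(-1-j))$; the dominant contribution of this plane cohomology is governed by the multiplicity $s(j)$ of $j$ in $\{k_i\}$, together with the generic splitting data on $L$. The delicate point, and the step I expect to be the main obstacle, is to gain the extra $-1$, that is, to see that one of these classes is never a new generator. This is exactly where connectedness (Theorem~\ref{T-Buraggina}) should enter: because $H^1_*(E)$ has no internal gaps, in the relevant range there is cohomology in the degree immediately below $-1-j$ whose image under multiplication accounts for at least one dimension of $H^1(E(-1-j))$, which therefore cannot carry a minimal generator. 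Isolating \emph{precisely} one such dimension, and verifying that the correction coming from the generic splitting type does not spoil the count (using Theorem~\ref{T-res} to control $h^0(E_H(1))$ in the borderline degree), is what yields the sharp bound $\rho(-1-j)\le s(j)-1$; keeping track of the shift by $c_1$ then gives the $c_1=-1$ version as well.
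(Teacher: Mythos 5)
There is nothing to compare on the paper's side: the paper does not prove this lemma at all, it recalls it from Hartshorne--Rao (\cite{Ha-Rao}, Prop.~3.1), with the remark that the $c_1=0$ statement there also works for $c_1=-1$. So your proposal has to stand on its own, and it contains two genuine gaps.

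First, your description of the spectrum is wrong, and the error is load-bearing. The spectrum is not the splitting data of $E$ along a general flag $L\subset H\subset\Pt$: as you yourself write, the general splitting on a line is the balanced one ($|a-b|\le 1$, Grauert--M\"ulich) \emph{whatever} the spectrum is, so ``descending to the general line'' cannot detect $k_+$, and the surjectivity you want to push up the two restriction sequences (``beyond the threshold every class of $E_L$ is a product of a linear form with a class of lower degree'') has no content. What the spectrum actually computes (\cite{SRS}, 7.1) is $h^1(E(l))=\sum_i h^0(\oc_{\mathbb{P}^1}(k_i+l+1))$ for $l\le -1$, and dually $h^2$ in the complementary range, via the pencil of planes through a general line; it does not compute $h^1(E_H(l))$ for a general plane.

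Second, and decisively, your master inequality $\rho(l)\le h^1(E_H(l))$ (which is correct) can never yield the refined bound $s(j)-1$. For \emph{any} plane $H$ the restriction sequence gives
$$h^1(E_H(-1-j))\;\ge\;\dim\mathrm{coker}\bigl(H^1(E(-2-j))\stackrel{\cdot h}{\to} H^1(E(-1-j))\bigr)\;\ge\;h^1(E(-1-j))-h^1(E(-2-j))\;=\;\#\{i:k_i\ge j\}\;\ge\;s(j),$$
the equality being the spectral computation of $h^1$ in negative degrees. So the right-hand side of your inequality is always at least $s(j)$, never $s(j)-1$: the whole content of the lemma is a lower bound on the rank of the \emph{full} multiplication map $V\otimes H^1(E(-2-j))\to H^1(E(-1-j))$ that beats multiplication by every single linear form by at least one, and you never produce such a bound. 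Your proposed mechanism (Buraggina's connectedness supplies ``one dimension from below'') is a non sequitur --- connectedness of $H^1_*(E)$ says nothing about ranks of multiplication maps --- and it visibly breaks at the top of the spectrum: for $j=k_+$ one has $h^1(E(-2-k_+))=0$, so every element of $H^1(E(-1-k_+))$ is a minimal generator and $\rho(-1-k_+)=s(k_+)$ exactly. (This computation also shows that the inequality as transcribed in the paper, with range $0\le j\le k_+$, fails at the endpoint $j=k_+$: an instanton has $\rho(-1)=c_2=s(0)$. The correct range is $j<k_+$, which is the only way the paper ever uses it, namely $j=0$, $k_+=1$.)
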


Finally let us recall Horrock's construction of the ''minimal monad'' for a rank two vector bundle $E$ on $\Pt$ with $-1\leq c_1\leq 0$. Let
$$\cdots \to L_2 \to L_1 \to L_0 \to H^1_*(E) \to 0$$
be the minimal free resolution. Then $L_1 \simeq L_1^*(c_1)$, $L_2$ has a direct summand isomorphic to $L_0^*(c_1)$ which induces a minimal monad
$$\tilde L_0^*(c_1) \hookrightarrow \tilde L_1 \twoheadrightarrow \tilde L_0$$
whose cohomology is $E$. Furthermore $rk(L_1) = 2rk(L_0)+2$. See for instance \cite{Horrocks}, \cite{Decker}, \cite{Ha-Rao}. 

\section{Unstable bundles.}

First of all let us recall the following useful fact:

\begin{lemma}
\label{L-res Strano}
Let $\ec$ be a rank two vector bundle on $\Pt$ with $c_1(\ec )=c_1$. Assume $\ec$ has a section vanishing in codimension two. If $h^0(\ec _H(-c_1+1)) \neq 0$ for $H$ a general plane, then $\ec$ is the direct sum of two line bundles.
\end{lemma}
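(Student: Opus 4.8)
The plan is to translate the section into its zero locus via the Serre correspondence and then use the very particular twist $-c_1+1$ to force that locus to be degenerate. A section of $\ec$ vanishing in codimension two yields
$$0 \to \oc \to \ec \to \ic_C(c_1) \to 0,$$
where $C\subset\Pt$ is a locally complete intersection curve with $\omega_C\cong\oc_C(c_1-4)$ and $\deg C=c_2(\ec)$. By the Serre correspondence $\ec$ is a sum of two line bundles if and only if $C$ is a complete intersection; and a curve contained in a plane $\pi$ is automatically cut out by $\pi$ together with a surface restricting to its equation, hence is a complete intersection (here necessarily of type $(1,c_1-1)$). So it suffices to prove that \emph{$C$ lies in a plane}.

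The heart of the argument is a wedge computation on a general plane $H\cong\Ptw$. Write $s\in H^0(\ec)$ for the given section, so that $Z:=C\cap H=Z(s|_H)$ is a finite scheme of $\deg C$ points, and let $0\neq\sigma\in H^0(\ec_H(-c_1+1))$ be furnished by the hypothesis. Because $\wedge^2\ec_H\cong\oc_H(c_1)$, the product $s|_H\wedge\sigma$ is a section of $\oc_H(c_1)\otimes\oc_H(-c_1+1)=\oc_H(1)$, i.e.\ a linear form on $H$; it vanishes wherever $s|_H$ and $\sigma$ are dependent, in particular on all of $Z$. As long as this form is not identically zero its zero locus is a line $\ell\supseteq Z$, so the general plane section of $C$ is collinear. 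This is exactly why the twist $-c_1+1$ is chosen: it sends the wedge into $\oc_H(1)$, producing a \emph{line} through $Z$. The form is automatically nonzero once $c_1\geq 2$ (there being no section $f$ of $\oc_H(1-c_1)$ with $\sigma=f\cdot s|_H$); the remaining values of $c_1$, where $\ec$ is very unstable, have to be treated by a separate and easier ad hoc argument.

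It then remains to deduce that $C$ itself is planar from the collinearity of its general plane section. This is the crux, and it is here that Strano's (Laudal-type) general position theorem enters. For $C$ integral and nondegenerate of degree at least three, the general plane section consists of points in uniform position, of which no three are collinear; collinearity of $Z$ is therefore impossible unless $C$ is degenerate. Integral curves of degree $\leq 2$ are planar in any case, so an integral $C$ must lie in a plane, and the splitting follows from the first paragraph.

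The main obstacle is this last passage, and in particular its reliance on integrality: for reducible $C$ the implication fails, as the bundle $N(1)$ attached to a pair of skew lines satisfies all the hypotheses (its general plane section being two, trivially collinear, points) yet does not split. Consequently the argument must be run only after reducing to, or otherwise ensuring, that the zero locus $C$ is integral; establishing this in the situations where the lemma is applied is the delicate point, whereas the Serre correspondence and the wedge computation are routine.
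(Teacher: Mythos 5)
Your argument is, in substance, the paper's argument: the paper also passes through the Serre sequence $0\to\oc\to\ec\to\ic_C(c_1)\to 0$, deduces that the general plane section $Z=C\cap H$ is collinear (your wedge $s|_H\wedge\sigma$ is precisely the composite $H^0(\ec_H(-c_1+1))\to H^0(\ic_{Z}(1))$ obtained by restricting the Serre sequence to $H$ and twisting), and then quotes Strano's theorem to conclude that $C$ is a plane curve, hence that $H^1_*(\ec)=0$ and $\ec$ splits by Horrocks. So the two proofs differ only in packaging.

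The ``main obstacle'' of your last paragraph is therefore not a weakness of your write-up relative to the paper: it is a gap in the paper's own proof, and your counterexample is correct. Strano's theorem, like any uniform-position argument, is proved for \emph{integral} curves, whereas the zero scheme of a section of a rank two bundle is only a locally complete intersection curve; and for $\ec\cong N(1)$, $N$ a null correlation bundle, the section vanishes along two skew lines, whose every plane section is collinear. Concretely, restricting $0\to\oc(-1)\to\Omega^1_{\Pt}(1)\to N\to 0$ to any plane gives $h^0(N_H)=1$, so $h^0(\ec_H(-c_1+1))\neq 0$, yet $N(1)$ does not split: the Lemma as stated is false, and no completion of your integral-curve argument can prove it. Three precisions. (a) Your proposed remedy (``ensure $C$ is integral'') is not available here: in the paper's application (Lemma \ref{L-h^1=0 unstable}) one has $\ec=E(-r)$ with $c_1(\ec)\le 0$, so two skew lines (which force $c_1(\ec)=2$) cannot occur, but non-planar double lines with $\omega_C=\oc_C(c_1(\ec)-4)$, of arithmetic genus $\le -3$, can -- and this is exactly the curve the paper must later exclude by hand (the diameter $5$ computation) at the end of Proposition \ref{P-unstable case b=3}. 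The correct repair is to weaken the conclusion to ``$\ec$ splits or $\deg C=2$'': for $\deg C\ge 3$, collinearity of the general section does force planarity of an arbitrary locally Cohen--Macaulay curve (integral components of degree $\ge 3$ are planar by your uniform-position argument, components of degree $\le 2$ are planar anyway, and one checks that collinearity of the union, multiple structures included, forces a single common plane), and this weaker statement suffices for every use made of the Lemma in the paper. (b) For $c_1\le 1$ the hypothesis as literally written is vacuous, since $\oc_H(1-c_1)\subset\ec_H(1-c_1)$ already has sections; the intended hypothesis is the one your wedge isolates, namely the existence of $\sigma$ with $s|_H\wedge\sigma\neq 0$, equivalently $h^0(\ic_{Z}(1))\neq 0$, and with that reading your case distinction on $c_1$ (and the promised ``ad hoc argument'' for small $c_1$) disappears. (c) None of this endangers the paper's main theorems, precisely because the degree-two loci are disposed of separately where they arise; but the Lemma, and its invocation in Lemma \ref{L-h^1=0 unstable}, should be restated with the degree-two exception.
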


\begin{proof} We have an exact sequence: $0 \to \oc \to \ec \to \ic _X(c_1)\to 0$, where $X \subset \Pt$ is a curve. The assumption implies $h^0(\ic _{X\cap H}(1))\neq 0$, for $H$ a general plane. By a result of Strano (\cite{Strano}) this implies ($ch(k)=0$) that $X$ is a plane curve. So $H^1_*(\ec )=0$ and $\ec$ is decomposed.
\end{proof}

\begin{remark} The assumption $ch(k)=0$ is necessary, see \cite{Ha-Fe}.
\end{remark}

From now on $E$ will denote a normalized, unstable rank two vector bundle. hence $h^0(E(-r))\neq 0$ for some $r \geq 0$ and we will assume that $-r$ is the least twist having a section.

\begin{lemma}
\label{L-h^1=0 unstable}
With notations as above, if $b(E)=b$, then $h^1(E(r-b+1-c_1))=0$. Moreover: $-2b+2r+6-c_1 \leq 0$.
\end{lemma}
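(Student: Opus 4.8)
The plan is to read off $H^1_*(E)$ from the section of $E(-r)$ and to trap its support between a vanishing coming from the first part and an explicit \emph{nonvanishing} interval. First I would record the geometric input. Since $-r$ is the least twist carrying a section, that section cannot vanish on a divisor (otherwise dividing by its equation would produce a section of a smaller twist), so it vanishes in codimension two along a curve $X$ and we obtain the Serre sequence
$$0 \to \oc(r) \to E \to \ic_X(c_1-r) \to 0.$$
As $E$ is indecomposable the section is nowhere zero-free, so $X \neq \emptyset$ and $\deg X = c_2(E(-r)) \ge 1$. Because $H^1(\oc(k))=0$ on $\Pt$ this identifies $h^1(E(t))$ with $h^1(\ic_X(c_1-r+t))$, and combining the sequence with $0 \to \ic_X \to \oc \to \oc_X \to 0$ gives $h^2(E(s)) = h^1(\oc_X(c_1-r+s))$ whenever $r+s \ge -3$ (so that $H^3(\oc(r+s))=0$).

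For the first assertion I would invoke Lemma \ref{L-res Strano}: $E(-r)$ is indecomposable and carries a section vanishing in codimension two, hence $h^0(E_H(r-c_1+1))=0$ for a general plane $H$, and therefore $h^0(E_H(m))=0$ for all $m \le r-c_1+1$ (twisting down by a general line only drops sections). This makes $\cdot H\colon H^1(E(t)) \to H^1(E(t+1))$ injective for $t \le r-c_1$, so the composite $\cdot H^b\colon H^1(E(t)) \to H^1(E(t+b))$ is injective as soon as $t \le r-c_1-b+1$ (the intermediate source degrees then all lie $\le r-c_1$). Since $\frak m^b H^1_*(E)=0$ this composite is also zero, whence $h^1(E(t))=0$ for every $t \le r-b+1-c_1$; in particular $h^1(E(r-b+1-c_1))=0$.

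For the \emph{Moreover} I would manufacture a whole interval on which $H^1_*(E)$ is nonzero. As $X$ is a local complete intersection, hence Cohen--Macaulay, Serre duality on $X$ gives $h^1(\oc_X(m)) = h^0(\omega_X(-m))$, and Riemann--Roch gives $h^0(\omega_X(-m)) \ge \chi(\omega_X(-m)) = p_a-1-m\deg X$; using the standard bound $p_a \ge 1-\deg X$ together with $\deg X \ge 1$ this is $\ge 1$ as soon as $-m \ge 2$. Feeding this into $h^2(E(s))=h^1(\oc_X(c_1-r+s))$ (valid for $r+s\ge -3$) yields $h^2(E(s)) \neq 0$ for all $-r-3 \le s \le r-c_1-2$, and Serre duality on $\Pt$ then gives $h^1(E(t)) \neq 0$ for every $t \in [-r-2,\,r-c_1-1]$.

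It remains to combine the two facts. The vanishing point $r-b+1-c_1$ of the first part cannot lie in the nonvanishing interval $[-r-2,\,r-c_1-1]$. An unstable indecomposable bundle is not a null-correlation bundle, so by Theorem \ref{T-b<3} we have $b \ge 2$, and hence $r-b+1-c_1 \le r-c_1-1$; the point must therefore fall below the interval, i.e. $r-b+1-c_1 < -r-2$, which gives $b \ge 2r-c_1+4$. Since $r \ge 0$ and $c_1 \le 0$ this implies $-2b+2r+6-c_1 \le -2r+c_1-2 \le 0$, as claimed. I expect the only delicate step to be the uniform Riemann--Roch estimate $h^0(\omega_X(k)) \ge 1$ for $k \ge 2$: it must hold for \emph{every} curve $X$ that can occur, including non-reduced or disconnected ones, which is exactly why I rely on $p_a \ge 1-\deg X$ (equality for disjoint unions of lines) rather than on any connectedness of $X$.
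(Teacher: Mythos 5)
Your first step misapplies Lemma \ref{L-res Strano}. Taking its contrapositive with $\ec = E(-r)$ you deduce $h^0(E_H(r-c_1+1))=0$; but this statement is false regardless of how the lemma is read: restricting $0 \to \oc(r) \to E \to \ic_C(c_1-r) \to 0$ to a plane $H$ exhibits $\oc_H(2r-c_1+1)$ as a subsheaf of $E_H(r-c_1+1)$, so $h^0(E_H(r-c_1+1)) \geq h^0(\oc_H(2r-c_1+1)) > 0$. The operative content of the lemma (visible in its proof, which goes through Strano's theorem, and in the way the paper invokes it) is that for $E$ indecomposable one may assume $h^0(\ic_{C\cap H}(1))=0$ for $H$ general, hence $h^0(E_H(k)) = h^0(\oc_H(k+r))$ for $k \leq r-c_1+1$: the sections of $E_H(k)$ do not vanish, but they all come from $\oc_H(k+r)$ and therefore lift to $H^0(E(k))$, and it is this surjectivity of $H^0(E(k)) \to H^0(E_H(k))$ --- not the (false) vanishing --- that makes $\cdot H : H^1(E(k-1)) \to H^1(E(k))$ injective in the stated range. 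Your skeleton (injectivity of $\cdot H^b$ against $\frak m^b$-annihilation) is exactly the paper's, so this part is repairable, but as written it rests on a false intermediate claim.

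The ``Moreover'' is where the real damage lies, precisely at the step you flagged as delicate. The bound $p_a(X) \geq 1 - \deg X$ holds for reduced curves but fails for non-reduced locally Cohen--Macaulay curves, and in this situation it fails for \emph{every} curve that can occur: the zero scheme of the section of $E(-r)$ is subcanonical with $\omega_X \simeq \oc_X(c_1-2r-4)$, which forces $1-p_a(X) = d(4+2r-c_1)/2 \geq 2d$ (where $d = \deg X$), i.e. $p_a \leq 1-2d < 1-d$. (The paper itself meets such a curve in Proposition \ref{P-unstable case b=3}: a double line of degree $2$ and genus $-3$.) Plugging the true value of $p_a$ into your Riemann--Roch estimate gives $\chi(\omega_X(-m)) = d\left(-m - (4+2r-c_1)/2\right)$, which is $\leq 0$ whenever $m \geq c_1/2 - r - 2$, in particular at $m=-2$ and on the whole upper portion of your claimed non-vanishing interval; so the interval is not established (its truth is at best unproven here) and your final comparison of the vanishing point against it collapses. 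The paper needs no lower bound on $p_a$ at all: it runs the genus relation in the opposite direction, deducing from the first part that $h^0(\oc_C(-b+1))=0$, hence $\chi(\oc_C(-b+1)) \leq 0$, and then computing exactly $\chi(\oc_C(-b+1)) = d(-2b+2r+6-c_1)/2$ via $1-p_a = d(4+2r-c_1)/2$, which yields the inequality at once.
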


\begin{proof} By assumption we have:
$$0 \to \oc \to E(-r) \to \ic _C(-2r+c_1) \to 0$$
where $C$ is a curve with $\oG _C(4+2r-c_1)\simeq \oc _C$. In particular: $1-p_a(C) = d(4+2r-c_1)/2\,\,(*)$, where $d=\deg (C)$.

We may assume $h^0(\ic _C(1))=0$ (otherwise $E$ is decomposed). It follows that $h^0(E(k))= h^0(\oc (k+r))$ if $k \leq r-c_1+1$. We may assume $h^0(\ic _{C\cap H}(1))=0$ if $H$ is a general plane (Lemma \ref{L-res Strano}). Hence $h^0(E_H(k))=h^0(\oc _H(k+r))$ if $k \leq r-c_1+1$. This shows that:
$$0 \to E(k-1) \to E(k) \to E_H(k) \to 0$$
induces an exact sequence on global sections if $k \leq r-c_1+1$. So $H^1(E(k-1))\stackrel{.H}{\hookrightarrow} H^1(E(k))$ if $k \leq r-c_1+1$. Then $H^1(E(r-b-c_1+1)) \stackrel{.H^b}{\hookrightarrow} H^1(E(r+1-c_1))$ is injective. Since $.H^b\equiv 0$, $h^1(E(r-b-c_1+1))=0$.

It follows that $h^1(\ic _C(-b+1))=0 = h^0(\oc _C(-b+1))$. This implies $\chi (\oc _C(-b+1)) \leq 0$. Since $\chi (\oc _C(-b+1))=d(-b+1) -p_a(C)+1$, from $(*)$ we get: $-2b+2r-c_1+6 \leq 0$.
\end{proof}

This gives us the complete classification when $b \leq 3$:

\begin{proposition}
\label{P-unstable case b=3}
There is no unstable rank two vector bundle $E$ with $1\leq b(E) \leq 3$.
\end{proposition}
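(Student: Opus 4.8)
The plan is to extract everything from the numerical inequality of Lemma~\ref{L-h^1=0 unstable} and then to dispose of a single boundary case. Write $b=b(E)$. The Lemma gives $-2b+2r+6-c_1\le 0$, i.e. $2b\ge 2r+6-c_1$. Since $r\ge 0$ and, $E$ being normalized, $c_1\in\{-1,0\}$, the right-hand side is $\ge 6$, so $b\ge 3$; in particular $b=1$ and $b=2$ are impossible for an unstable $E$. If moreover $b=3$, then $6\ge 2r+6-c_1$, that is $2r\le c_1\le 0$, which (as $r\ge 0$) forces $c_1=0$ and $r=0$. Thus it only remains to exclude an unstable $E$ with $c_1=0$, $r=0$ and $b=3$.

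In this case $E$ is strictly semistable and, by the proof of Lemma~\ref{L-h^1=0 unstable}, its section vanishes along a curve $C$ with $0\to\oc\to E\to\ic_C\to 0$, $\deg C=c_2=:d$ and $\omega_C\simeq\oc_C(-4)$. Hence $\chi(\oc_C(t))=d(t+2)$ and, by duality on $C$, $h^1(\oc_C(t))=h^0(\oc_C(-4-t))$; note $d\ge 2$, since $d=1$ would give $\omega_C\simeq\oc_C(-2)$. As $H^1(\oc(t))=H^2(\oc(t))=0$ on $\Pt$, the sequence yields $H^1(E(t))\simeq H^1(\ic_C(t))$ for all $t$, so that $H^1_*(E)$ is the deficiency module of $C$; and we may assume $C$ non-degenerate ($h^0(\ic_C(1))=0$), for otherwise $E$ splits by Lemma~\ref{L-res Strano}. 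I would also record, from the same proof, that multiplication by a general linear form is injective, $H^1(E(t-1))\hookrightarrow H^1(E(t))$, for every $t\le 1$.

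Next I would locate this module. Lemma~\ref{L-h^1=0 unstable} already gives $h^1(E(-2))=0$. Using $h^1(\ic_C(t))=h^0(\oc_C(t))-h^0(\oc(t))+h^0(\ic_C(t))$ with $h^0(\oc_C(t))\ge\chi(\oc_C(t))=d(t+2)$ and $h^0(\ic_C(t))=0$ for $t\le 1$, one checks $h^1(E(t))\ne 0$ for $t=-1,0,1$ (the estimates being $\ge d$, $\ge 2d-1$, $\ge 3d-4$), and likewise for $t=2$ as soon as $d\ge 3$ (estimate $\ge 4d-10$). By the connectedness Theorem~\ref{T-Buraggina} together with $h^1(E(-2))=0$ this forces $c'=-1$ and $d(E)=c-c'+1\ge 4$. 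Combined with the injectivity above, the spaces $H^1(E(-1))\hookrightarrow H^1(E(0))\hookrightarrow H^1(E(1))$ have non-decreasing dimensions of size $\ge d$, while $\frak m^3H^1_*(E)=0$ means $.H^3$ annihilates $H^1(E(-1))$.

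To turn this into a contradiction I would bound the degrees of the minimal generators of $H^1_*(E)$: adapting the spectrum estimate of Lemma~\ref{L-spe et gene H^1} to the present (semistable) situation should place all generators in low degree, say $\le\aG$ with $\aG$ small; Proposition~\ref{P-generalities b(E)}(2) then gives $h^1(E(n))=0$ for $n\ge\aG+3$, which is incompatible with $d(E)\ge 4$ and the explicit non-vanishing just obtained. Equivalently, $.H^2H^1(E(-1))$ (nonzero, of dimension $\ge d$ by injectivity) must lie in the kernel of $.H\colon H^1(E(1))\to H^1(E(2))$, and bounding this kernel by a restriction argument in the spirit of Theorem~\ref{T-res} should yield the same contradiction. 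The main obstacle is exactly this last step: one must control the module structure of $H^1_*(E)$ — the degrees of its generators, not merely the dimensions $h^1(E(t))$ — in the strictly semistable boundary case where Lemma~\ref{L-spe et gene H^1} and Proposition~\ref{P-generalities b(E)}(3)--(4) are not directly available; the extreme value $c_2=2$, where the positivity of $h^1(E(2))$ is not automatic, must be settled by a direct postulation computation for the degree-two subcanonical curve $C$.
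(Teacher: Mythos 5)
Your reduction to the case $c_1=0$, $r=0$, $b=3$ is exactly the paper's, and your postulation estimates for the curve $C$ (giving $h^1(E(t))\neq 0$ for $-1\le t\le 2$ when $c_2\ge 3$, hence $d(E)\ge 4$) are correct. But the proof does not close, and you say so yourself: knowing $d(E)\ge 4$ contradicts nothing, since in general one only has $b(E)\le d(E)$, and the tools you propose for converting diameter information into Buchsbaum information --- Lemma \ref{L-spe et gene H^1}, Theorem \ref{T-res}, Proposition \ref{P-generalities b(E)} (3)--(4) --- all require $E$ stable, which is precisely what fails here. So the ``main obstacle'' you flag is a genuine gap, not a technicality.

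The idea you are missing is elementary and avoids stability altogether: show that $H^1_*(E)$ is generated by its degree $-1$ piece. Since $E$ is properly semistable with $c_1=0$, a general line $L$ has $E_L\simeq 2.\oc _L$ (Grauert--M\"ulich), and Serre duality together with $h^1(E(-2))=0$ (plus your injectivity $H^1(E(k-1))\hookrightarrow H^1(E(k))$ for $k\le 1$) gives $h^2(E(k))=0$ for $k\ge -2$. Combining $0\to E(m)\otimes \ic _L\to E(m)\to E_L(m)\to 0$ with $0\to E(m-2)\to 2.E(m-1)\to E(m)\otimes \ic _L\to 0$ then shows that $2.H^1(E(m-1))\to H^1(E(m))$ is surjective for $m\ge 0$, so $H^1(E(-1))$ generates the module, and Proposition \ref{P-generalities b(E)} (1) yields $b(E)=d(E)$. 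Now $b=3$ forces $h^1(E(2))=0$; your own estimates already exclude $c_2\ge 3$ at this point (or one argues as the paper does: $h^0(E(2))=\chi (E(2))=20-4c_2\ge h^0(\oc (2))=10$, whence $c_2\le 2$). The remaining case $c_2=2$ --- which you defer to ``a direct postulation computation'' --- is the double line of arithmetic genus $-3$: its Rao module is $(k[z,w]/(\alpha ,\beta ))(2)$ with $\alpha ,\beta$ coprime cubics, which has diameter $5$ (and is nonzero in degree $-2$), contradicting $d(E)=3$ (and $h^1(E(-2))=0$). Without the generation statement, and without this last computation, the argument as written does not establish the Proposition.
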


\begin{proof} From $-2b+2r+6-c_1 \leq 0$ (Lemma \ref{L-h^1=0 unstable}), since $r \geq 0$ and $-1\leq c_1\leq 0$, we see that if $b \leq 3$ the only possibility is $b=3$, $c_1=r=0$. So $E$ is properly semi-stable, with $h^1(E(-2))=0$ (Lemma \ref{L-h^1=0 unstable}). By Serre's duality we have $h^2(E(k))=0$ if $k\geq -2$. Also $E_L \simeq 2.\oc _L$ for a general line. Combining the exact sequences: $0 \to E(m)\otimes \ic _L \to E(m) \to E_L(m) \to 0$ and $0 \to E(m-2) \to 2.E(m-1) \to E(m)\otimes \ic _L \to 0$, we see that $2.H^1(E(m-1)) \to H^1(E(m))$ is surjective. Hence $H^1_*(E)$ is generated by $H^1(E(-1))$. It follows (Proposition \ref{P-generalities b(E)}) that $d(E)=b(E)$. If $b(E)=3$, then $h^1(E(2))=0$. Finally we get $\chi (E(2)) = h^0(E(2)) = 20-4c_2$. Since $h^0(E(2)) \geq h^0(\oc (2))=10$, we get $c_2 \leq 2$. So the section of $E$ vanishes along a curve of degree two with $\oG _C(4)=\oc _C$. So $C$ is a double line of arithmetic genus -3. But the Hartshorne-Rao module of such a curve has diameter 5.

\end{proof}

\section{Stable bundles with $c_1=0$ and $b=3$.}

Let $E$ be a stable bundle with $c_1=0$ and $b(E)=3$. By Proposition \ref{P-generalities b(E)}, $h^1(E(-3))=0$ and $h^1(E(-2)) \leq 2$. We will distinguish two cases: (a) $h^1(E(-2)) > 0$, (b) $h^1(E(-2))=0$.

\subsection{Stable bundles with $c_1=0, b=3$ and $h^1(E(-2)) > 0$.}\quad \\

We first observe that by the properties of the spectrum (\cite{SRS}, 7.1, 7.2, 7.5) the spectrum of $E$  is of the form $\{-1^u, 0^{c_2-2u}, 1^u\}$, where $u := h^1(E(-2))\leq 2$. It follows (Lemma \ref{L-spe et gene H^1}) that $H^1_*(E)$ is generated in degrees $\leq 0$ and $h^1(E(3))=0$ (Proposition \ref{P-generalities b(E)}). 

\begin{lemma}
\label{L-bd c2 c1=0}
Let $E$ be a stable rank two vector bundle with $c_1=0$ and $b(E)=3$. Then $c_2(E) \leq 8$ and if $H^1_*(E)$ is generated in degrees $\leq -1$, $c_2(E)\leq 5$.
\end{lemma}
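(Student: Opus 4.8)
The plan is to reduce both inequalities to a single Riemann--Roch evaluation at the twists $\ell=3$ and $\ell=2$, exploiting that at these twists the outer cohomology groups vanish. Recall that for a rank two bundle with $c_1=0$ one has $\chi(E(\ell)) = 2\binom{\ell+3}{3}-c_2(\ell+2)$, so in particular $\chi(E(3)) = 40-5c_2$ and $\chi(E(2)) = 20-4c_2$. Since $E$ is stable with $c_1=0$ it is self-dual, $E \simeq E^*$, whence Serre duality gives $h^2(E(\ell)) = h^1(E(-\ell-4))$ and $h^3(E(\ell)) = h^0(E(-\ell-4))$.

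First I would assemble the vanishing of the two ends of $H^1_*(E)$. From $b(E)=3$ and Proposition \ref{P-generalities b(E)}(3) together with Theorem \ref{T-Buraggina} one gets $h^1(E(k))=0$ for all $k\leq -3$. At the other end, the facts recorded just before the statement give that $H^1_*(E)$ is generated in degrees $\leq 0$ (the vanishing $h^1(E(-3))=0$ forces the spectrum into $\{-1,0,1\}$, so $k_+\leq 1$, and Lemma \ref{L-spe et gene H^1} applies), and hence Proposition \ref{P-generalities b(E)}(2) with $\aG\leq 0$ and $b=3$ yields $h^1(E(n))=0$ for $n\geq 3$.

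Now evaluate at $\ell=3$. We have $h^1(E(3))=0$, while $h^2(E(3))=h^1(E(-7))=0$ and $h^3(E(3))=h^0(E(-7))=0$ by the tail vanishing and self-duality. Therefore $h^0(E(3))=\chi(E(3))=40-5c_2$, and $h^0\geq 0$ forces $c_2\leq 8$. For the refined bound, assume $H^1_*(E)$ is generated in degrees $\leq -1$, so $\aG\leq -1$ and Proposition \ref{P-generalities b(E)}(2) now gives $h^1(E(n))=0$ for $n\geq 2$; in particular $h^1(E(2))=0$. The same Serre-duality computation gives $h^2(E(2))=h^1(E(-6))=0$ and $h^3(E(2))=0$, so $h^0(E(2))=\chi(E(2))=20-4c_2\geq 0$, whence $c_2\leq 5$.

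The only genuinely delicate point is the upper vanishing $h^1(E(3))=0$: it rests on controlling the top of the spectrum (no $k_i$ exceeding $1$, equivalently $h^1(E(-3))=0$) and feeding this into the generation bound of Lemma \ref{L-spe et gene H^1}. Once that degree bound is secured, both inequalities drop out of a single Riemann--Roch evaluation, and, notably, no appeal to the restriction Theorem \ref{T-res} is needed.
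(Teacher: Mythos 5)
Your proof is correct and follows essentially the same route as the paper: both derive $h^1(E(3))=0$ (and, under the extra generation hypothesis, $h^1(E(2))=0$) from the spectrum bound $k_+\leq 1$ via Lemma \ref{L-spe et gene H^1} and Proposition \ref{P-generalities b(E)}(2), and then conclude by the Riemann--Roch computations $\chi(E(3))=40-5c_2\geq 0$ and $\chi(E(2))=20-4c_2\geq 0$. You merely make explicit the Serre-duality vanishings ($h^2=h^3=0$) that the paper leaves implicit, and correctly observe that the bound $h^1(E(-2))\leq 2$ from Theorem \ref{T-res} is not actually needed for this particular lemma.
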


\begin{proof} As already said $h^1(E(3))=0$. This implies $\chi (E(3)) = 40 -5c_2 \geq 0$, hence $c_2 \leq 8$. If $H^1_*(E)$ is generated in degrees $\leq -1$, by Proposition \ref{P-generalities b(E)}, we have $h^1(E(2))=0$, this implies $\chi (E(2)) = 20-4c_2 \geq 0$, hence $c_2 \leq 5$.
\end{proof}

The following is well known (\cite{Ellia-vanish}) but for the convenience of the reader we include a proof:

\begin{lemma}
\label{L-minMonad c1=0 -1}
Let $E$ be a stable rank two vector bundle with $c_1=0$ and spectrum $\{-1, 0^{c_2-2},1\}$. Then $H^1_*(E)$ is generated in degrees $\leq -1$. More precisely:\\
(i) If the natural map $\mu :H^1(E(-2))\otimes V \to H^1(E(-1))$ is injective the minimal monad has the following shape:
$$(c_2-4).\oc (-1)\oplus \oc (-2) \hookrightarrow (2c_2-4).\oc \to (c_2-4).\oc (1)\oplus \oc (2)$$
(ii) If $\mu$ is not injective it has rank three and the minimal monad is:
$$(c_2-3).\oc (-1)\oplus \oc (-2) \hookrightarrow \oc (-1)\oplus (2c_2-4).\oc \oplus \oc (1) \to (c_2-3).\oc (1)\oplus \oc (2)$$
\end{lemma}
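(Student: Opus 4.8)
The plan is to reconstruct the first two terms of the minimal free resolution of $M:=H^1_*(E)$ and then quote Horrocks' construction of the monad recalled above.

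\emph{Step 1: the Hilbert function of $M$.} Since $E$ is stable with $c_1=0$ we have $h^0(E(\ell))=0$ for $\ell\le 0$, and $E^*\simeq E$, so Serre duality gives $h^2(E(\ell))=h^1(E(-\ell-4))$ and $h^3(E(\ell))=h^0(E(-\ell-4))$. Feeding $h^1(E(-2))=1$ (the spectrum) and $h^1(E(-3))=0$ (Proposition \ref{P-generalities b(E)}, as $b(E)=3$) into Riemann--Roch I obtain $\dim M_{-2}=1$, $\dim M_{-1}=c_2$, $\dim M_0=2c_2-2$ and $M_\ell=0$ for $\ell\le -3$. By Lemma \ref{L-spe et gene H^1}, $M$ is generated in degrees $\le k_+-c_1-1=0$.

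\emph{Step 2: the generators.} The bottom piece forces exactly one generator $\xi$ in degree $-2$. The number of degree $-1$ generators is $c_2-\mathrm{rk}(\mu)$, and since $M_{-2}$ is one dimensional $\mathrm{rk}(\mu)=4-\dim\{v\in V: v\xi=0\}$. I claim this annihilator has dimension $\le 1$, so $\mathrm{rk}(\mu)\in\{3,4\}$ (equal to $4$ iff $\mu$ is injective, to $3$ otherwise). Indeed, two independent linear forms $v_1,v_2$ killing $\xi$ cut out a line $L$; tensoring the Koszul complex of $\oc_L$ with $E$ and chasing cohomology, $v_1\xi=v_2\xi=0$ would lift $\xi$ into $H^0(\ic_L\otimes E)\subseteq H^0(E)=0$, forcing $\xi=0$, which is absurd. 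This yields the dichotomy and the sheaf $\tilde L_0$.

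\emph{Step 3: the syzygies, and the absence of a degree $0$ generator.} Here I use the two structural facts $L_1\simeq L_1^*(c_1)$ and $\mathrm{rk}(L_1)=2\,\mathrm{rk}(L_0)+2$. Self-duality forces the twists of $L_1$ to be symmetric about $0$; since the lowest generator sits in degree $-2$, every minimal syzygy has degree $\ge -1$, hence the twists lie in $\{-1,0,1\}$. The degree $-1$ syzygies are precisely the relations $v\xi=0$, i.e. the linear annihilator of $\xi$, of dimension $4-\mathrm{rk}(\mu)$; by self-duality as many appear in degree $1$. Writing $t$ for the number of a priori possible degree $0$ generators, a comparison of the rank relation with the count $\dim(\ker(L_0\to M))_0=\dim(L_0)_0-\dim M_0$ — together with the fact that the syzygy module, being a submodule of the free module $L_0$, is torsion free (so that $V$ times the degree $-1$ syzygy is $4$-dimensional in case (ii)) — forces $t=0$. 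Thus $M$ is generated in degrees $\le -1$, and $L_1$ is $(2c_2-4)\oc$ in case (i) and $\oc(-1)\oplus(2c_2-4)\oc\oplus\oc(1)$ in case (ii).

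\emph{Step 4: the monad.} Substituting $L_0$ and $L_1$ into $\tilde L_0^*(c_1)\hookrightarrow\tilde L_1\twoheadrightarrow\tilde L_0$ with $c_1=0$ gives the two displayed monads. I expect the delicate points to be the stability/Koszul bound on the annihilator of $\xi$ in Step 2, and the bookkeeping of Step 3, where the self-duality of $L_1$, the rank formula and the torsion-freeness of the syzygy module must be combined to rule out a degree $0$ generator; everything else is Riemann--Roch and linear algebra.
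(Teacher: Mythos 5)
Your proof is correct, but it diverges from the paper's argument at the two decisive points, and the comparison is instructive. First, for the dichotomy $\mathrm{rk}(\mu)\in\{3,4\}$: the paper quotes Lemma \ref{L-spe et gene H^1} (Hartshorne--Rao), which bounds the number of degree $-1$ generators by $s(0)-1=c_2-3$ and hence gives $\mathrm{rk}(\mu)\geq 3$ directly; you instead prove that the annihilator of $\xi$ in $V$ is at most one-dimensional via the Koszul resolution of $\ic_L$ and stability ($H^0(\ic_L\otimes E)\subseteq H^0(E)=0$), which is self-contained and uses nothing beyond $h^0(E)=0$. Second, to exclude degree-$0$ generators: the paper passes to the monad and argues that a summand $a.\oc$ of $\tilde L_0^*$ could only map to $\tilde L_1$ by constants (killed by minimality) plus at most one $\oc(1)$-component, contradicting fiberwise injectivity of the monad map unless $a=0$; you instead combine the rank identity $\mathrm{rk}(L_1)=2\,\mathrm{rk}(L_0)+2$ with the count $\dim(\ker)_0=\dim(L_0)_0-\dim M_0$ and torsion-freeness of the syzygy module. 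Your bookkeeping does close: in case (i) one gets $t+2c_2-4$ minimal degree-zero syzygies against a required $2t+2c_2-4$, and in case (ii) $(t+2c_2)-4$ (after subtracting $\dim V\sigma=4$) against $2t+2c_2-4$ once the two twists $\pm 1$ are set aside, forcing $t=0$ both times. Your route trades the geometric input (fiberwise injectivity of the monad map) for pure Hilbert-function arithmetic; the paper's is shorter once the monad formalism is taken for granted, while yours makes the lemma essentially independent of it.

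One blemish to repair: in Step 1 you justify $h^1(E(-3))=0$ by Proposition \ref{P-generalities b(E)} ``as $b(E)=3$'', but the lemma makes no Buchsbaum hypothesis --- it is stated for any stable bundle with spectrum $\{-1,0^{c_2-2},1\}$ --- so as written your proof establishes a weaker statement than claimed. The vanishing is in fact a property of the spectrum itself: for $\ell\leq -1$ one has $h^1(E(\ell))=\sum_i h^0(\oc_{\Pun}(k_i+\ell+1))$, which is zero for $\ell=-3$ since $k_+=1$. Replace that citation by this remark and the proof is complete.
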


\begin{proof} (i) We know that $M:=H^1_*(E)$ is generated in degrees $\leq 0$. If $\mu$ is injective there are $c_2-4$ generators of degree -1 and no relations in degree one. Since $L_1 \simeq L_1^*$, by minimality $L_1= \aG. S$ and we have: $\cdots \to \aG. S \to a.S\oplus (c_2-4).S(1)\oplus S(2) \to M \to 0$. By minimality $a=0$ and the conclusion follows.\\
(ii) By Lemma \ref{L-spe et gene H^1}, $M$ has at most $c_2-3$ generators of degree -1, so $\mu$ has rank $\geq 3$. If the rank is three there is one relation of degree one and we have: $$\cdots \to S(-1)\oplus \aG. S\oplus S(1) \to a.S\oplus (c_2-3).S(1)\oplus S(2) \to M \to 0$$ 
The induced minimal monad is:
$$\oc (-2)\oplus (c_2-3).\oc (-1) \oplus a.\oc \hookrightarrow \oc (-1)\oplus \aG. \oc \oplus \oc (1)$$
Since this is a minimal injective morphism of vector bundles we get $a=0$ and the conclusion follows.
\end{proof}

\begin{lemma}
\label{c1=0 sp -1 no}
Let $E$ be a stable rank two vector bundle with $c_1=0, b(E)=3$ and $h^1(E(-2))\neq 0$. Then the spectrum of $E$ is $Sp(E) = \{-1^2, 0^{c_2-4}, 1^2\}$.
\end{lemma}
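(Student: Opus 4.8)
The plan is to rule out the case $h^1(E(-2))=1$. Since $b(E)=3$, Proposition \ref{P-generalities b(E)}(3) gives $u:=h^1(E(-2))\le h^0(E_H(1))\le 2+c_1=2$, and by the description recalled before the statement the spectrum is $\{-1^u,0^{c_2-2u},1^u\}$; as the hypothesis $h^1(E(-2))\neq 0$ forces $u\in\{1,2\}$, it suffices to exclude $u=1$. So I would assume $u=1$, i.e. $Sp(E)=\{-1,0^{c_2-2},1\}$, and derive a contradiction with $b(E)=3$.

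First I would pin down the bottom of the module. Proposition \ref{P-generalities b(E)}(3) gives $h^1(E(-3))=0$, while $h^1(E(-2))=1$; hence $H^1(E(-2))$ is one-dimensional, spanned by a single minimal generator $\xi$ in the lowest degree $-2$. Riemann--Roch, together with $h^0(E(-1))=0$ (stability) and $h^2(E(-1))=h^1(E(-3))=0$, yields $h^1(E(-1))=c_2$. Since $H^1(E(-3))=0$ the degree $-1$ part of $\frak m\cdot H^1_*(E)$ equals $V\cdot\xi\subseteq H^1(E(-1))$, so by Lemma \ref{L-spe et gene H^1} (with $s(0)=c_2-2$, whence $\rho(-1)\le s(0)-1=c_2-3$) one gets
$$\dim(V\xi)=h^1(E(-1))-\rho(-1)\ge c_2-(c_2-3)=3.$$

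Next I would push this bound one step up, and then play it against $b(E)=3$. For a general plane $H$ with equation $\ell$ the bundle $E_H$ is semistable with $c_1=0$, so $h^0(E_H(-1))=h^0(E_H)=0$; the restriction sequences then make $\cdot\ell\colon H^1(E(-2))\hookrightarrow H^1(E(-1))$ and $\cdot\ell\colon H^1(E(-1))\hookrightarrow H^1(E(0))$ injective (these are exactly the injections used in the proof of Proposition \ref{P-generalities b(E)}(3)). Applying the second one to $V\xi$ gives $\dim(\ell\cdot V\xi)=\dim(V\xi)\ge 3$, and since $\ell\cdot V\xi\subseteq S^2V\cdot\xi=\frak m^2\xi$ we obtain $\dim(\frak m^2\xi)\ge 3$ inside $H^1(E(0))$. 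On the other hand $\frak m^3\,H^1_*(E)=0$ forces $\ell\cdot\frak m^2\xi\subseteq\frak m^3\xi=0$, so $\frak m^2\xi$ lies in the kernel $K_H$ of $\cdot\ell\colon H^1(E(0))\to H^1(E(1))$. By the argument of Proposition \ref{P-generalities b(E)}(3) (the surjection $H^0(E_H(1))\twoheadrightarrow K_H$) together with Theorem \ref{T-res}, $\dim K_H\le h^0(E_H(1))\le 2+c_1=2$. Hence $3\le\dim(\frak m^2\xi)\le\dim K_H\le 2$, a contradiction; therefore $u=2$ and $Sp(E)=\{-1^2,0^{c_2-4},1^2\}$.

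The heart of the argument, and the step I expect to be the main obstacle, is the middle one: the single bottom generator $\xi$ must be shown to span a $3$-dimensional space already in degree $0$, which hinges on the injectivity of multiplication by a general linear form on $H^1(E(-2))$ and $H^1(E(-1))$. This in turn rests on the semistability of $E_H$ for general $H$ (Barth's restriction theorem), the same mechanism that underlies Proposition \ref{P-generalities b(E)}(3). One should also note that the bound $h^0(E_H(1))\le 2$ invokes Theorem \ref{T-res}, valid for $c_2\ge 4$; the lone small case $c_2=3$ (where already $\rho(-1)=0$ and $\dim(V\xi)=3$) would have to be dispatched by a direct estimate of $h^0(E_H(1))$ on $\Ptw$.
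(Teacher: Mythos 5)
For $c_2\geq 4$ your argument is correct, and it takes a genuinely different route from the paper. The paper proves the lemma by first pinning down the minimal monad for the spectrum $\{-1,0^{c_2-2},1\}$ (Lemma \ref{L-minMonad c1=0 -1}), bounding $c_2\leq 5$ (Lemma \ref{L-bd c2 c1=0}), and then killing the two monad cases separately: when $\mu$ has rank three, a \emph{special} plane $H_0$ with $h^0(E_{H_0}(-1))\neq 0$ leads to $h^1(\ic_Z(1))=0$ for a length $c_2+1$ scheme $Z$, which is absurd; when $\mu$ is injective, $c_2=4$ is excluded via the complete intersection structure of $H^1_*(E)$, and $c_2=5$ by exactly your mechanism (the image $W$ of $S^2V\otimes\langle\xi\rangle$ in $H^1(E)$ has dimension $\geq 4$ and sits in $K_H$, against Theorem \ref{T-res}). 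What you have done is promote that last subargument to a uniform proof: the generator bound $\rho(-1)\leq s(0)-1$ of Lemma \ref{L-spe et gene H^1} gives $\dim(V\xi)\geq 3$ with no monad computation, and the injection $\cdot\ell\colon H^1(E(-1))\hookrightarrow H^1(E)$ (from $h^0(E_H)=0$, Barth) transports this into a $\geq 3$-dimensional subspace of $K_H$, contradicting $\dim K_H\leq h^0(E_H(1))\leq 2$. This bypasses Lemma \ref{L-minMonad c1=0 -1}, the bound $c_2\leq 5$, and the case split altogether; it is shorter and arguably cleaner than the paper's treatment on this range.

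The genuine gap is the case $c_2=3$ (spectrum $\{-1,0,1\}$ is the only possibility with $h^1(E(-2))\neq 0$ there, and the lemma's conclusion fails for it, so it must be excluded, not ignored), and your proposed patch cannot work: by Riemann--Roch on $\Ptw$, for any plane $H$ with $E_H$ semistable one has $\chi(E_H(1))=2\chi(\oc_{\Ptw}(1))-c_2(E_H(1))=6-4=3$ and $h^2(E_H(1))=h^0(E_H(-4))=0$, so $h^0(E_H(1))\geq 3$ for \emph{every} general plane. There is no ``direct estimate'' giving $h^0(E_H(1))\leq 2$ when $c_2=3$ --- this is precisely why Theorem \ref{T-res} carries the hypothesis $c_2\geq 4$ --- and your final inequality $3\leq\dim K_H\leq 2$ evaporates in exactly the one case your main argument leaves open. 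A different mechanism is needed, and the paper's fits your setup: for $c_2=3$ your own count gives $\dim(V\xi)\geq 3=h^1(E(-1))$, so the four-dimensional space $V\otimes\langle\xi\rangle$ maps to $H^1(E(-1))$ with a kernel; a nonzero $h_0$ with $h_0\xi=0$ yields $h^0(E_{H_0}(-1))\neq 0$ on the special plane $H_0$, hence $0\to\oc_{H_0}\to E_{H_0}(-1)\to\ic_Z(-2)\to 0$ with $\deg Z=4$; then $h^1(E(2))=0$ forces $h^1(\ic_Z(1))=0$, absurd since $\chi(\ic_Z(1))=-1$. Note, however, that the step $h^1(E(2))=0$ uses $b(E)=3$ \emph{and} the fact that $H^1_*(E)$ has no minimal generator in degree $0$ (Proposition \ref{P-generalities b(E)}(2)), and that last fact is exactly what the monad-minimality argument of Lemma \ref{L-minMonad c1=0 -1} supplies --- so some resolution-theoretic input remains unavoidable for $c_2=3$ even in your streamlined approach.
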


\begin{proof} We have to show that $Sp(E)=\{-1,0^{c_2-2}, 1\}$ is impossible. By Lemma \ref{L-minMonad c1=0 -1} and Lemma \ref{L-bd c2 c1=0}, $c_2\leq 5$.  

If we are in case (ii) of Lemma \ref{L-minMonad c1=0 -1}  there exists a \emph{special} plane $H_0$ such that $H^1(E(-2)) \stackrel{m_{H_0}}{\to} H^1(E)$ is zero. It follows that $h^0(E_{H_0}(-1))\neq 0$. Since $h^0(E_H(-2))=0, \forall H$ (because $h^1(E(-3))=0$), the section of $E_{H_0}(-1))$ vanishes in codimension two: $0 \to \oc _{H_0} \to E_{H_0}(-1) \to \ic _Z(-2) \to 0$. We have $\deg (Z)=c_2+1$. Since $h^1(E(2))=0$ (because $b(E)=3$ and $H^1_*(E)$ is generated in degrees $\leq -1$), we get: $h^1(E_{H_0}(2))=0$ (because $h^2(E(1))=0$). It follows that $h^1(\ic _Z(1))=0$, which is absurd since $c_2 \geq 3$.

So we are necessarily in case (i) of Lemma \ref{L-minMonad c1=0 -1}, hence $c_2 \geq 4$. If $c_2=4$, then $H^1_*(E)=(S/I)(2)$, where $I$ is a complete intersection of type $(2,2,2,2)$. It follows that $d(E)=b(E)= 5$.

Assume $c_2=5$. The map $\left( S^2V \otimes \lag \xi \rag\right) \oplus \left( V \otimes \lag \aG \rag\right) \to H^1(E)$ is surjective. Since $h^1(E)=8$, we deduce that the map $S^2V \otimes \lag \xi \rag \to H^1(E)$ has an image, $W$, of dimension $\geq 4$. Since $b(E)=3$, if $H$ is any plane $H^1(E) \stackrel{m_H}{\to}H^1(E(1))$ has $W$ in its kernel, $K_H$. Since $h^0(E_H(1)) \geq \dim (K_H)$, this contradicts Theorem \ref{T-res}.
\end{proof}

Now we turn to the case $Sp(E) =\{-1^2, 0^{c_2-4}, 1^2\}$ (observe that necessarily $c_2 \geq 5$).

\begin{lemma}
\label{L-reso -1^2}
Let $E$ be a stable rank two vector bundle with $c_1=0$ and $Sp(E)=\{-1^2,0^{c-4},1^1\}$ ($c:=c_2 \geq 5$). Then the minimal free resolution of $H^1_*(E)$ is:
$$\cdots \to (8-e).S(-1)\oplus (2c-10).S\oplus (8-e).S(1) \to (c-e).S(1) \oplus 2.S(2) \to H^1_*(E) \to 0$$
where $5 \leq e \leq 8$. In particular $H^1_*(E)$ is generated in degrees $\leq -1$.
\end{lemma}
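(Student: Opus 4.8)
The plan is to read off the graded Betti numbers of $M:=H^1_*(E)$ from the spectrum together with Horrocks' self-duality. First I would record the Hilbert function $\dim M_l=h^1(E(l))$. Since $E$ is stable with $c_1=0$ one has $h^0(E(l))=0$ for $l\le 0$, and by Serre duality (using $E\simeq E^*$) $h^2(E(-1))=h^1(E(-3))=0$; hence Riemann--Roch gives $\dim M_{-1}=-\chi(E(-1))=c$, while $\dim M_{-2}=h^1(E(-2))=2$ by hypothesis and $M_l=0$ for $l\le -3$ by Proposition \ref{P-generalities b(E)}. Thus $M$ has exactly two minimal generators in its bottom degree $-2$ (the summand $2.S(2)$), and in degree $-1$ it has $\rho(-1)=c-e$ minimal generators, where $e:=rk(\mu)$ for the multiplication map $\mu:V\otimes M_{-2}\to M_{-1}$. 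Since $\dim(V\otimes M_{-2})=8$ we get $e\le 8$, and Lemma \ref{L-spe et gene H^1} gives $\rho(-1)\le s(0)-1=c-5$, i.e.\ $e\ge 5$; this already yields $5\le e\le 8$.

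Next I would pin down $L_1$ from the minimal monad. We know (from the discussion opening this subsection, via Lemma \ref{L-spe et gene H^1}) that $M$ is generated in degrees $\le 0$, so a priori $L_0=2.S(2)\oplus (c-e).S(1)\oplus g.S$ with $g:=\rho(0)\ge 0$. By minimality the entries of $L_1\to L_0$ lie in $\frak m$, so $L_1$ is generated in degrees $\ge -1$; and by Horrocks' self-duality $L_1\simeq L_1^*$, which forces the generator degrees of $L_1$ to be symmetric about $0$ and hence confined to $\{-1,0,1\}$. Writing $L_1=a.S(-1)\oplus b.S\oplus a.S(1)$, the $a$ summands $S(1)$ are exactly the minimal first syzygies of degree $-1$; since no minimal syzygy can carry a constant coefficient on a generator, and since degree-$(-1)$ or degree-$0$ generators would require coefficients of degree $0$ or $-1$, such a syzygy involves only the two degree-$(-2)$ generators with linear coefficients. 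These are precisely $\ker\mu$, so $a=8-e$, independently of $g$.

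The main obstacle is to show $g=0$, i.e.\ that $M$ is in fact generated in degrees $\le -1$. For this I would pass, exactly as in Lemma \ref{L-minMonad c1=0 -1}, to the induced minimal monad $\tilde L_0^*\hookrightarrow \tilde L_1\twoheadrightarrow \tilde L_0$: a $g.\oc$ summand of $L_0$ produces both a trivial summand $g.\oc$ of $\tilde L_0$ and, via $L_0^*\subset L_2$, a trivial summand of $\tilde L_0^*$, and since the left map is a minimal injective morphism of vector bundles such a summand cannot survive, just as the spurious term $a.\oc$ is shown to vanish there. Hence $g=0$, $L_0=(c-e).S(1)\oplus 2.S(2)$, and $M$ is generated in degrees $\le -1$. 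Finally the Horrocks identity $rk(L_1)=2\,rk(L_0)+2=2(c-e+2)+2$ together with $a=8-e$ forces $b=2c-10$, giving the asserted resolution; as a numerical check, the alternating sum of the Hilbert functions of $L_0$ and $L_1$ (the summand $L_0^*\subset L_2$ contributing nothing in degrees $\le 0$) reproduces $\dim M_l=2,\,c,\,2c-2$ for $l=-2,-1,0$. The delicate point throughout is the elimination of the degree-$0$ generators via monad minimality; everything else is bookkeeping with the spectrum and the self-dual shape of $L_1$.
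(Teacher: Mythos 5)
Your outline follows the paper's proof almost step for step: the Hilbert function of $M$ from the spectrum, the bound $5\le e\le 8$ via Lemma \ref{L-spe et gene H^1}, the identification of the $S(1)$-summands of $L_1$ with $\ker\mu$, the use of $L_1\simeq L_1^*$ to pin down the shape of $L_1$, and the final rank identity $rk(L_1)=2\,rk(L_0)+2$. But the step you yourself call the main obstacle --- eliminating the degree-zero generators --- is where your argument has a genuine gap. You claim that a trivial summand $g.\oc$ of $\tilde L_0^*$ ``cannot survive'' a minimal injective morphism of vector bundles, as if this were a general principle inherited from Lemma \ref{L-minMonad c1=0 -1}. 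It is not. Minimality only kills the component $g.\oc\to \alpha.\oc$ (constant entries), and the component $g.\oc\to (8-e).\oc(-1)$ vanishes for degree reasons; what survives is a fiberwise injective map $g.\oc\to (8-e).\oc(1)$, i.e.\ a subbundle inclusion given by a matrix of linear forms. Such inclusions exist perfectly well whenever $8-e\ge 4$ (the four coordinates give a nowhere-vanishing $\oc\hookrightarrow 4.\oc(1)$), so minimality alone proves nothing. Indeed, the paper's own Section \ref{sec-neg inst} exhibits minimal monads of negative instantons in which exactly such summands do survive (the case $c>0$, which by Theorem \ref{T-c<n} really occurs); your principle, applied there, would ``prove'' $c=0$. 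In Lemma \ref{L-minMonad c1=0 -1}(ii), which you cite as precedent, the relevant target is a \emph{single} $\oc(1)$, so impossibility is immediate --- that is precisely what does not carry over here.

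What is missing is the numerical input at the decisive moment. The paper argues that the quotient of $g.\oc\hookrightarrow (8-e).\oc(1)$ is a vector bundle with $H^1_*=0$, hence must have rank $\ge 3$, giving $8-e\ge g+3$; only then does $e\ge 5$ force $g=0$. (Equivalently and more elementarily: $g\ge 1$ would produce $8-e$ linear forms on $\Pt$ with no common zero, forcing $8-e\ge 4$, i.e.\ $e\le 4$, contradicting $e\ge 5$.) You establish $e\ge 5$ early on but never invoke it in the monad step, and without it the conclusion $g=0$ is false for purely formal reasons. A second, minor point: you justify $M_l=0$ for $l\le -3$ by Proposition \ref{P-generalities b(E)}, which presupposes knowledge of $b(E)$; the lemma as stated assumes only the spectrum, from which this vanishing (and $\dim M_{-2}=2$) follows directly by the standard spectrum formulas. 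The rest of your bookkeeping (the count $a=8-e$, the rank computation giving $2c-10$, the Hilbert-function check) is correct.
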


\begin{proof} Since $\rG (-1) \leq s(0)-1=c-5$, the image of $H^1(E(-2))\otimes V \to H^1(E(-1))$ has dimension $e \geq 5$. There are $c-e$ generators of degree $-1$ and exactly $8-e$ linear relations between the two generators of degree $-2$. Hence the resolution has the following shape:
$$\cdots \to \bigoplus S(b_i) \oplus (8-e).S(1) \to a.S \oplus (c-e).S(1)\oplus 2.S(2) \to H^1_*(E) \to 0$$
Since $L_1 = \bigoplus S(b_i) \oplus (8-e).S(1)$ satisfies $L_1 \simeq L_1^*$, we get $L_1= (8-e).S(-1)\oplus \aG .S\oplus (8-e).S(1)$. Now the minimal monad provides a minimal injective morphism of vector bundles: $\lc _0^* \hookrightarrow \lc _1$. It follows (by minimality) that $a.\oc \hookrightarrow (8-e).\oc (1)$. The quotient is a vector bundle with $H^1_*=0$ so it has to have rank $\geq 3$. This implies $8-e \geq a+3$. Since $e \geq 5$ it follows that $a=0$: there is no generator of degree zero. So the resolution is:
$$\cdots \to (8-e).S(-1)\oplus \aG .S\oplus (8-e).S(1) \to (c-e).S(1)\oplus 2.S(2) \to H^1_*(E) \to 0$$
Since $2.rk(L_0)+2 = rk (L_1)$, we get the result. 
\end{proof}

We are close to the end:

\begin{corollary}
\label{C-c1=0 cUnInstanton}
Let $E$ be a stable rank two vector bundle with $c_1=0$. If $b(E)=3$, then $E$ is an instanton with $3 \leq c_2(E) \leq 5$.
\end{corollary}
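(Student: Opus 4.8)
The plan is to exploit the dichotomy set up at the start of the section: either (a) $h^1(E(-2))>0$ or (b) $h^1(E(-2))=0$. Since for $c_1=0$ the instanton condition is \emph{exactly} $h^1(E(-2))=0$, case (b) is already the instanton case, while case (a) produces non-instantons and must therefore be shown to be vacuous. So the two tasks are: eliminate (a) entirely, and then extract the bound $3\le c_2\le 5$ in (b) from the lemmas already in hand.

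First I would dispose of case (a). By Lemma \ref{c1=0 sp -1 no} the spectrum is $\{-1^2,0^{c_2-4},1^2\}$, forcing $c_2\ge 5$, and by Lemma \ref{L-reso -1^2} the module $H^1_*(E)$ is generated in degrees $\le -1$; Lemma \ref{L-bd c2 c1=0} then gives $c_2\le 5$, hence $c_2=5$. Substituting $c=5$ into the resolution of Lemma \ref{L-reso -1^2} forces $e=5$ and $c-e=0$: there are no generators in degree $-1$, so $H^1_*(E)$ is generated by the two classes of degree $-2$ alone. As all minimal generators then lie in the single degree $-2$, Proposition \ref{P-generalities b(E)}(1) gives $d(E)=b(E)=3$; since $c'=-2$ (because $h^1(E(-2))=2\neq 0$ while $h^1(E(-3))=0$), this means $h^1(E(k))=0$ for $k\ge 1$. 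Now Riemann--Roch gives $\chi(E(1))=8-3c_2=-7$, whereas $h^1(E(1))=0$, $h^2(E(1))=h^1(E(-5))=0$ and $h^3(E(1))=h^0(E(-5))=0$ force $\chi(E(1))=h^0(E(1))\ge 0$, a contradiction. Thus case (a) is impossible, so $h^1(E(-2))=0$ and $E$ is an instanton.

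It remains to bound $c_2$ in case (b). By the properties of the spectrum the vanishing $h^1(E(-2))=0$ means no $k_i\ge 1$, hence by symmetry all $k_i=0$ and the spectrum is $\{0^{c_2}\}$; then $k_+=0$, so Lemma \ref{L-spe et gene H^1} shows $H^1_*(E)$ is generated in degrees $\le k_+-c_1-1=-1$, and Lemma \ref{L-bd c2 c1=0} yields $c_2\le 5$. For the lower bound, any stable bundle with $c_1=0$ and $c_2\le 2$ has $b\le 2$ (the null-correlation bundle, $c_2=1$, has $d=1$, and the $c_2=2$ instanton has $d=2$), which is also the content of Theorem \ref{T-b<3}; since $b(E)=3$ we conclude $c_2\ge 3$. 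This gives $3\le c_2\le 5$, as claimed.

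The main obstacle is the elimination of case (a). It is not numerically obvious on its own and becomes tractable only after the precise resolution of Lemma \ref{L-reso -1^2} is combined with $c_2\le 5$ to pin $c_2=5$ and the generation in the single degree $-2$; at that point the clash between $\chi(E(1))=-7$ and $\chi(E(1))=h^0(E(1))\ge 0$ closes the argument.
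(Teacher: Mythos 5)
Your proposal is correct and follows essentially the same route as the paper: the same dichotomy on $h^1(E(-2))$, elimination of the spectrum $\{-1^2,0,1^2\}$ via generation in degree $-2$, the contradiction $\chi(E(1))=-7$ against $h^1(E(1))=0$, and the bound $c_2\le 5$ from Lemma \ref{L-bd c2 c1=0}. The only cosmetic differences are that you extract the absence of degree $-1$ generators from the resolution of Lemma \ref{L-reso -1^2} (forcing $e=5$) where the paper cites Lemma \ref{L-spe et gene H^1}, and that you spell out the lower bound $c_2\ge 3$ via Theorem \ref{T-b<3}, which the paper leaves implicit.
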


\begin{proof} If $h^1(E(-2))\neq 0$, by Lemma \ref{c1=0 sp -1 no} the spectrum is $\{-1^2, 0^{c_2-4},1^2\}$. According to Lemma \ref{L-reso -1^2} if $E$ has such a spectrum, $H^1_*(E)$ is generated in degrees $\leq -1$. By Lemma \ref{L-bd c2 c1=0}, $c_2\leq 5$. So it remains to show that the case $Sp(E)=\{-1^2,0,1^2\}$ is impossible. 
By Lemma \ref{L-spe et gene H^1}, $H^1_*(E)$ is generated by its degree -2 piece. Hence $d(E)=b(E)$ (Proposition \ref{P-generalities b(E)}). If $b(E)=3$, then $h^1(E(1))=0$ (Proposition \ref{P-generalities b(E)}). Since $\chi (E(1))= -7$, this is impossible.
\end{proof}
\medskip

\subsection{Instanton bundles with $b=3$.}\quad \\

We recall that an \emph{instanton} is a stable rank two vector bundle, $E$, on $\Pt$ with $c_1(E)=0$ and $h^1(E(-2))=0$. Equivalently $E$ is an instanton if it is stable and its spectrum is $\{0^{c_2}\}$. As it is well known $H^1_*(E)$ is generated by its degree -1 piece, hence (Proposition \ref{P-generalities b(E)}) $d(E)=b(E)$.

We recall an important result, due to Hartshorne-Hirschowitz (\cite{HaHi}):

\begin{theorem}
\label{T-InstNatCoh}
For every $c_2\geq 1$ there exists an instanton bundle with Chern classes $c_1=0, c_2$ and with natural cohomology (i.e. at most one of the four groups $H^i(E(k)), 0 \leq i \leq 3$ is non-zero, $\forall k \in \bZ$).
\end{theorem}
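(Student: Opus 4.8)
The plan is to reduce ``natural cohomology'' to a short finite list of vanishings, observe that this list cuts out an open condition on a parameter space of instantons, and then produce one instanton meeting it for each $c_2$ by an inductive degeneration.

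First I would restate the defining condition cohomologically. By Riemann--Roch on $\Pt$,
\[
\chi(E(k)) = \frac{(k+1)(k+2)(k+3)}{3} - c_2\,(k+2).
\]
Stability with $c_1=0$ gives $h^0(E(k))=0$ for $k\le 0$; the instanton condition together with connectedness (Theorem \ref{T-Buraggina}) gives $h^1(E(k))=0$ for $k\le -2$; and $E\simeq E^*$ with Serre duality gives $h^2(E(k))=h^1(E(-4-k))$ and $h^3(E(k))=h^0(E(-4-k))$. Comparing the degree ranges in which each group can be nonzero, one sees that the only two groups that can ever be simultaneously nonzero are $H^0(E(k))$ and $H^1(E(k))$ (the pair $H^2,H^3$ being their Serre dual). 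Hence $E$ has natural cohomology if and only if $h^0(E(k))\cdot h^1(E(k))=0$ for every $k$. Writing $k_1$ for the least integer with $\chi(E(k))\ge 0$, this is the finite list of vanishings $h^0(E(k))=0$ for $1\le k\le k_1-1$ together with $h^1(E(k))=0$ for $k\ge k_1$. This already explains why the \emph{'t Hooft} bundles, which carry a section of $E(1)$, cannot have natural cohomology once $c_2\ge 3$: such a section is an ``early'' element of $H^0(E(1))$ at a twist where $\chi(E(1))<0$.

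Next I would realise instantons of charge $c_2$ as cohomology bundles of the minimal linear monads $c_2\,\oc(-1)\hookrightarrow (2c_2+2)\,\oc \twoheadrightarrow c_2\,\oc(1)$ recalled before Section 3. The dimensions $h^i(E(k))$ are upper semicontinuous in flat families, and each $h^i(E(k))$ is always $\ge$ its ``natural'' value $\max\{0,(-1)^i\chi(E(k))\}$; therefore the finite list of vanishings from the first step defines an open condition, and to prove the theorem it suffices to exhibit, for each $c_2$, a single flat family of sheaves with these invariants whose special member already attains the natural (minimal) cohomology. The general member of such a family is then a bundle, hence an instanton, with natural cohomology.

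Finally I would build such a family by induction on $c_2$. The base case $c_2=1$ is the null-correlation bundle, whose $H^1_*$ is one-dimensional and concentrated in degree $-1$, so naturality is immediate. For the inductive step I would pass from a charge-$c_2$ instanton with natural cohomology to charge $c_2+1$ by an elementary modification supported on a general line $\ell$ (a two-step transformation arranged to preserve $c_1=0$ and raise $c_2$ by one), reading off the new cohomology from the long exact sequences of the modification together with the inductive hypothesis. The hard part will be precisely this step: one must guarantee that the modification creates no ``early section'', i.e. that the critical vanishings $h^0(E'(k))=0$ for $1\le k\le k_1'-1$ and $h^1(E'(k))=0$ for $k\ge k_1'$ survive the jump in $c_2$. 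This is a maximal-rank statement, and proving it requires the general position of $\ell$ together with a careful Horace-type degeneration; the failure of the \emph{'t Hooft} bundles shows that a genericity hypothesis of this kind cannot be dispensed with.
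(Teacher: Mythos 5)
The first thing to note is that the paper does not prove Theorem \ref{T-InstNatCoh} at all: it is a known result recalled verbatim from Hartshorne--Hirschowitz, and the only ``proof'' in the paper is the citation \cite{HaHi}. Your preliminary reductions are correct and cleanly done: using stability, the instanton vanishing $h^1(E(k))=0$ for $k\le -2$, and Serre duality together with $E\simeq E^*$, natural cohomology is indeed equivalent to the single condition $h^0(E(k))\cdot h^1(E(k))=0$ for all $k$, i.e.\ to a finite list of vanishings; and since $h^0(E(k))\ge \max\{0,\chi(E(k))\}$ and $h^1(E(k))\ge\max\{0,-\chi(E(k))\}$ for $k\ge -1$, natural cohomology is the minimal, hence open, behaviour in a flat family of instantons.

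However, your third step is not a proof but a restatement of the problem. The inductive passage from charge $c_2$ to charge $c_2+1$ by an elementary modification along a general line $\ell$ requires two things you do not establish: (a) the maximal-rank assertion that the modified sheaf still satisfies the critical vanishings --- this is precisely the interpolation statement whose proof, by a delicate Horace-type induction with auxiliary degenerations (not merely ``general position of $\ell$''), occupies essentially the whole of \cite{HaHi}; and (b) the smoothability assertion that the modified sheaf is a flat limit of honest instanton bundles of charge $c_2+1$ --- without this, semicontinuity transfers the vanishings to nothing, since openness is vacuous unless the family you build around the degenerate member genuinely contains locally free, stable members, and that too requires an argument. You explicitly flag (a) as ``the hard part'', which is accurate: what you have written is the correct skeleton of the Hartshorne--Hirschowitz argument with its entire mathematical content left blank, so as a proof of the theorem it has a genuine gap.
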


\begin{corollary}
\label{C-Instb=3}
There exists an instanton bundle, $E$, with $b(E)=3$ if and only if $c_2(E) \in \{3,4,5\}$.
\end{corollary}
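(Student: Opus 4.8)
The plan is to prove the two implications separately, using throughout that for an instanton $d(E)=b(E)$ (Proposition \ref{P-generalities b(E)}, since $H^1_*(E)$ is generated by its degree $-1$ piece). Thus the whole statement reduces to locating the nonzero graded pieces of $H^1_*(E)$ and reading off its diameter.

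For the necessity (``only if'') I would simply observe that an instanton with $b(E)=3$ is in particular a stable bundle with $c_1=0$ and $b(E)=3$, so Corollary \ref{C-c1=0 cUnInstanton} applies verbatim and yields $3\le c_2(E)\le 5$. No further work is needed in this direction.

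For the existence (``if'') I would, for each fixed $c_2\in\{3,4,5\}$, take as a witness an instanton $E$ with second Chern class $c_2$ and \emph{natural} cohomology, which exists by Theorem \ref{T-InstNatCoh}. Since $d(E)=b(E)$ and $H^1_*(E)$ is generated in degree $-1$ and connected (Theorem \ref{T-Buraggina}), we have $c'=-1$, so it remains only to show that the right end is $c=1$, i.e.\ that $h^1(E(1))\neq 0$ while $h^1(E(2))=0$; this gives $d(E)=1-(-1)+1=3$. By Riemann--Roch $\chi(E(k))=2\binom{k+3}{3}-c_2(k+2)$, and natural cohomology forces $h^1(E(k))=\max(0,-\chi(E(k)))$ for $k\ge -2$ (where $h^3(E(k))=h^0(E(-4-k))=0$ by stability). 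Now $\chi(E(1))=8-3c_2<0$ because $c_2\ge 3$, so $h^1(E(1))\neq 0$; and $\chi(E(2))=20-4c_2\ge 0$ because $c_2\le 5$, so $h^1(E(2))=0$. Hence $c=1$, $d(E)=3$ and $b(E)=3$, as required.

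The only point needing a word of care — though it is not a genuine obstacle — is the boundary value $c_2=5$, where $\chi(E(2))=0$: here I would stress that $h^1(E(2))=0$ follows from natural cohomology itself rather than from a strict sign, since a nonzero $H^1(E(2))$ with all other cohomology vanishing would force $\chi(E(2))\neq 0$. No larger twist can revive $H^1_*(E)$ for $c_2\le 5$, as $\chi(E(k))$ remains nonnegative for $k\ge 2$, and no smaller twist contributes since the spectrum $\{0^{c_2}\}$ already gives $h^1(E(k))=0$ for $k\le -2$. This pins the diameter to $3$ uniformly on $\{3,4,5\}$ and closes the equivalence.
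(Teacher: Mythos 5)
Your proof is correct and takes essentially the same route as the paper: existence comes from Hartshorne--Hirschowitz natural-cohomology instantons (Theorem \ref{T-InstNatCoh}) combined with $b(E)=d(E)$ for instantons and the signs of $\chi(E(1))=8-3c_2$ and $\chi(E(2))=20-4c_2$, including the correct handling of the boundary case $\chi(E(2))=0$ at $c_2=5$. The only cosmetic difference is in the necessity direction, where you cite Corollary \ref{C-c1=0 cUnInstanton} (a legitimate, non-circular appeal, which moreover covers the lower bound $c_2\geq 3$ explicitly), whereas the paper rederives the upper bound on the spot from $\chi(E(2))<0$ for $c_2\geq 6$, forcing $h^1(E(2))\neq 0$ and hence $d(E)\geq 4$.
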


\begin{proof} Since $\chi (E(2))=20-4c_2$ is $<0$ if $c_2\geq 6$, we have $h^1(E(2))\neq 0$, hence $d(E)\geq 4$. Since $b(E)=d(E)$ for an instanton, we conclude that if $b(E)=3$, then $c_2(E) \leq 5$.

If $E$ has natural cohomology $h^1(E(2))=0 \Leftrightarrow c_2 \leq 5$. Moreover since $\chi (E(1))=8-3c_2$, $h^1(E(1))\neq 0$ if $c_2\geq 3$. In conclusion, if $E$ has natural cohomology: $d(E)=3 \Leftrightarrow 3\leq c_2\leq 5$. Since in any case $d(E)=b(E)$ for an instanton, we conclude.
\end{proof}

Gathering everything together:

\begin{proposition}
\label{P-c1=0 final}
Let $E$ be a stable rank two vector bundle with $c_1=0$. If $b(E)=3$, then $E$ is an instanton with $3 \leq c_2(E) \leq 5$. Moreover for any $3\leq c_2 \leq 5$ there exists an instanton, $E$, with $c_2(E)=c_2$ and $b(E)=3$.
\end{proposition}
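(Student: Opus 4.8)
The plan is to assemble the two corollaries just established, since the proposition is precisely their conjunction. The first assertion --- that a stable $E$ with $c_1=0$ and $b(E)=3$ is an instanton with $3\leq c_2(E)\leq 5$ --- is exactly the content of Corollary \ref{C-c1=0 cUnInstanton}, so I would simply quote it. For the record, its force comes from the dichotomy on $h^1(E(-2))$: if $h^1(E(-2))\neq 0$ one is driven by Lemmas \ref{c1=0 sp -1 no} and \ref{L-reso -1^2} to the spectrum $\{-1^2,0^{c_2-4},1^2\}$ with $c_2\leq 5$ (Lemma \ref{L-bd c2 c1=0}), and the only surviving spectrum $\{-1^2,0,1^2\}$ is excluded by a $\chi(E(1))$ computation; hence $h^1(E(-2))=0$, so $E$ is an instanton, for which generation of $H^1_*(E)$ in degrees $\leq -1$ and Lemma \ref{L-bd c2 c1=0} again force $c_2\leq 5$.

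For the ``Moreover'' clause I would invoke the existence (the ``if'') direction of Corollary \ref{C-Instb=3}. Starting from the Hartshorne--Hirschowitz bundles of Theorem \ref{T-InstNatCoh}, which furnish for every $c_2$ an instanton of natural cohomology, one checks via $\chi(E(1))=8-3c_2$ and $\chi(E(2))=20-4c_2$ that such a bundle has $h^1(E(1))\neq 0$ exactly when $c_2\geq 3$ and $h^1(E(2))=0$ exactly when $c_2\leq 5$. Since $d(E)=b(E)$ for an instanton, this gives $b(E)=3$ precisely for $c_2\in\{3,4,5\}$, producing the required example in each of the three cases.

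The point worth flagging is that no new argument is needed at this stage: all the genuine work --- the spectral analysis, the minimal-monad bookkeeping, and the restriction theorem \ref{T-res} that eliminates the large-$c_2$ possibilities --- has already been carried out in the preceding lemmas and corollaries. The only thing to verify is that the two ranges coincide, i.e.\ that the values $c_2\in\{3,4,5\}$ permitted by the classification are exactly those realized by the natural-cohomology construction; this match is immediate from the two $\chi$-computations above, so I do not expect any real obstacle at this final assembling step.
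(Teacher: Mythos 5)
Your proposal is correct and matches the paper exactly: the paper offers no separate argument for this proposition (it is introduced with ``Gathering everything together''), since it is precisely the conjunction of Corollary \ref{C-c1=0 cUnInstanton} and the existence direction of Corollary \ref{C-Instb=3}, which is how you assemble it. Your recap of how those corollaries were obtained (the dichotomy on $h^1(E(-2))$, the exclusion of the spectrum $\{-1^2,0,1^2\}$ via $\chi(E(1))$, and the natural-cohomology bundles of Theorem \ref{T-InstNatCoh} with the two $\chi$-computations) is likewise faithful to the paper's reasoning.
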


\newpage

\section{Negative instanton bundles.}
\label{sec-neg inst}

Let us start with a definition:

\begin{definition}
\label{D-neg instanton}
A \emph{negative instanton} is a stable rank two vector bundle, $E$, with $c_1(E)=-1$ and $h^1(E(-2))=0$.
\end{definition}

Equivalently $E$ is a negative instanton if it is stable with spectrum $\{-1^{c_2/2},0^{c_2/2}\}$. Although there are some analogies with the case $c_1=0$, the situation is quite different. For instance if $E$ is a negative instanton then $H^1_*(E)$ is not necessarily generated by its elements of degree -1. All we can say is that $H^1_*(E)$ is generated in degrees $\leq 0$ (Lemma \ref{L-spe et gene H^1}). We denote by $c$ the number of minimal generators of degree zero. Also we set $n:= c_2/2$. To conclude the proof of Theorem \ref{T-the thm} we will need in the next section the following:

\begin{proposition}
\label{P-bd c n<4}
Let $E$ be a negative instanton with $4 \leq c_2 \leq 6$, then $c \leq c_2/2 -1$. Moreover if $c_2=4$ and $c=1$, then $h^0(E(1))\neq 0$.
\end{proposition}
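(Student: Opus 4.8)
The plan is to convert the statement about minimal generators into a statement about the rank of one multiplication map, prove the ``easy'' bound $c\le n$ by restricting to planes, and then gain the last dimension from Theorem \ref{T-res}. First I would compute, by Riemann--Roch on $\Pt$ together with the spectrum $\{-1^n,0^n\}$ (which, with Serre duality and Theorem \ref{T-Buraggina}, gives $h^1(E(k))=0$ for $k\le -2$ and $h^2(E)=h^2(E(-1))=0$), that $h^1(E(-1))=n$ and $h^1(E)=3n-1$. Since $H^1_*(E)$ is generated in degrees $\le 0$ (Lemma \ref{L-spe et gene H^1}) and $H^1(E(-2))=0$, every element of $H^1(E(-1))$ is a minimal generator, and the degree-$0$ generators form a complement of the image of the multiplication map $\mu:H^1(E(-1))\otimes V\to H^1(E)$. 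Hence $c=\dim\mathrm{coker}(\mu)=(3n-1)-\mathrm{rk}(\mu)$, and the desired inequality $c\le n-1$ is equivalent to $\mathrm{rk}(\mu)=\dim\big(V\cdot H^1(E(-1))\big)\ge 2n$.

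\emph{The bound $c\le n$.} For a general plane $H$ Barth's theorem gives $h^0(E_H)=0$, so $\cdot H:H^1(E(-1))\hookrightarrow H^1(E)$ is injective and $V_H:=H\cdot H^1(E(-1))=\ker\big(H^1(E)\to H^1(E_H)\big)$ has dimension $n$. For two general planes $H,H'$ meeting along a general line $L$ (so $E_L\cong\oc_L\oplus\oc_L(-1)$ and $h^0(E_L)=1$), I would tensor $E$ with $0\to\oc(-2)\to\oc\to\oc_{H\cup H'}\to 0$ and with the Mayer--Vietoris sequence $0\to\oc_{H\cup H'}\to\oc_H\oplus\oc_{H'}\to\oc_L\to 0$. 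Using $h^1(E(-2))=0$, $h^2(E)=0$ and $h^0(E_H)=h^0(E_{H'})=0$, the two long exact sequences identify $V_H\cap V_{H'}=\ker\big(H^1(E)\to H^1(E_H)\oplus H^1(E_{H'})\big)$ with a subspace of $H^0(E_L)\cong k$. Thus $\dim(V_H\cap V_{H'})\le 1$, so $\dim(V_H+V_{H'})\ge 2n-1$, and therefore $\mathrm{rk}(\mu)\ge 2n-1$, i.e. $c\le n$.

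\emph{Gaining the last dimension.} This is the crux. The pair-of-planes count cannot by itself separate $\mathrm{rk}(\mu)=2n-1$ from $2n$: if $\dim(V\cdot H^1(E(-1)))=2n-1$ then every $V_H$ already lies in the fixed space $V_H+V_{H'}$ and extra planes contribute nothing. To exclude $c=n$ I would use that $c_2\ge 4$, so $h^0(E_H(1))\le 1$ by Theorem \ref{T-res}, whence $\dim K_H\le 1$ for $K_H:=\ker(\cdot H:H^1(E)\to H^1(E(1)))$, exactly as in Proposition \ref{P-generalities b(E)}(3). Assuming $c=n$, I would then specialize to $c_2\in\{4,6\}$ and exploit the minimal monad $\tilde L_0^*(-1)\hookrightarrow\tilde L_1\twoheadrightarrow\tilde L_0$ with $\tilde L_0=c\,\oc\oplus n\,\oc(1)$: the self-duality $\tilde L_1\cong\tilde L_1^*(-1)$ leaves only finitely many possible shapes for $\tilde L_1$, and computing $\dim K_H$ (equivalently $h^0(E_H(1))$) from the monad of such a bundle contradicts $\dim K_H\le 1$. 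Turning the single unit of slack into a violation of the restriction theorem is where I expect the real work to be.

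\emph{The borderline section.} For $c_2=4$ (so $n=2$) and $c=1$ the previous inequality is an equality, $\mathrm{rk}(\mu)=2n=4$. To prove $h^0(E(1))\neq 0$ I would argue by contradiction: if $h^0(E(1))=0$, then $0\to E\to E(1)\to E_H(1)\to 0$ together with $h^0(E)=0$ gives $h^0(E_H(1))=\dim K_H$, hence $\dim K_H\le 1$ by Theorem \ref{T-res}. I would then show that the tightness $\mathrm{rk}(\mu)=4$, fed through the self-dual monad as above, forces $\dim K_H\ge 2$ for a suitable plane, a contradiction; so $h^0(E(1))\neq 0$. As indicated, the main obstacle throughout is the monad bookkeeping in the two cases $c_2=4,6$ needed to make the restriction bound $h^0(E_H(1))\le 1$ bite.
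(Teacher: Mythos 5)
Your reduction ($c=(3n-1)-\mathrm{rk}(\mu)$, so that $c\le n-1$ is equivalent to $\mathrm{rk}(\mu)\ge 2n$) and your two-plane argument for the easy bound $c\le n$ are both correct; the paper gets $c\le n$ by restricting to a general line instead, but that part is not the issue. The problem is that everything beyond this point --- which is the entire content of the Proposition --- is left as a plan, and the plan as stated does not work. You propose, assuming $c=n$, to enumerate the ``finitely many possible shapes'' of the self-dual middle term $\tilde L_1$ and to compute $\dim K_H$ (i.e.\ $h^0(E_H(1))$) from the monad so as to contradict Theorem \ref{T-res}. But when $c=n$ the numerical shape of the minimal monad is already uniquely determined, namely $n.\oc (-2)\oplus n.\oc (-1)\hookrightarrow (2n+1).(\oc (-1)\oplus \oc )\twoheadrightarrow n.\oc \oplus n.\oc (1)$: there is nothing to enumerate, and any obstruction must come from the \emph{maps} of the monad, not from its shape. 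Moreover you give no argument that $c=n$ forces $\dim K_H\ge 2$ for a \emph{general} plane $H$; Theorem \ref{T-res} constrains only general planes, so exhibiting one ``suitable'' (possibly special) plane with $\dim K_H\ge 2$ yields no contradiction. The same objections apply to your treatment of the borderline case $c_2=4$, $c=1$. You yourself flag this step as ``where I expect the real work to be''; that work is missing, so the proposal does not prove the statement.

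For comparison, the paper's mechanism is global, not a plane-restriction argument. From the display of the monad one extracts $\psi :(c+n+1).\oc \to n.\oc (1)$ with kernel $K$ and image $J$; self-duality of the monad shows $Coker(\tilde \bG )$ is locally free (Lemma \ref{L-cok bG tilde loc free}); one then bounds $c\le h^0(K(1))\le c+1$ (Lemma \ref{L-h0 K,N,E}, and this is the only place Theorem \ref{T-res} enters, through $h^0(E(1))\le 1$). The elementary estimate $h^0(J(1))\le 10\,rk(J)$ (Lemma \ref{L-F dans n.O Pk}) applied to $0\to K\to (c+n+1).\oc \to J\to 0$ gives $4n+3c+3\le 10\,rk(J)$; hence $c=n$ together with $rk(J)\le n-1$ forces $n\ge 5$, while $rk(J)=n$ forces (Corollary \ref{C-rkJ=n}, using that the quotient $\jc$ is rank one reflexive, hence $\oc (a)$ with $a<0$) $c=c_1(J)+a\le n-1$, a contradiction in either case for $2\le n\le 3$. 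The borderline statement $h^0(E(1))=1$ for $c_2=4$, $c=1$ falls out of the same computation ($a=-1$, $c_1(J)=n$, so $h^0(K(1))=c+1$). None of this section-counting structure is recoverable from your sketch.
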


This is a particular case of the following result proved in \cite{EG-2}:

\begin{theorem}
\label{T-c<n}
Let $E$ be a negative instanton with $c-2 \geq 2$. Then $c \leq c_2/2 -1$. Moreover if $c=c_2/2-1$, then $h^0(E(1)) =1$. Finally for every $c_2 \geq 2$ there exists a negative instanton with $c = c_2/2 -1$.
\end{theorem}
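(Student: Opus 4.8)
The plan is to reduce all three assertions to a single inequality tying the number $c$ of degree-zero generators of $H^1_*(E)$ to the number of sections of $E(1)$, namely
$$c \leq \frac{c_2}{2} - 2 + h^0(E(1)).$$
Writing $n := c_2/2$, Theorem \ref{T-res} gives $h^0(E(1)) \leq h^0(E_H(1)) \leq 2+c_1 = 1$ for a general plane $H$ (this needs $c_2 \geq 4$; the case $c_2=2$ is immediate since then $n-1=0$). Granting the displayed inequality, part (1) is the specialisation $h^0(E(1)) \leq 1$, while part (2) follows by contraposition: if $c = n-1$ then $n-1 \leq n-2+h^0(E(1))$ forces $h^0(E(1)) \geq 1$, hence $=1$.

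First I would fix the numerology. From the spectrum $\{-1^n,0^n\}$, Riemann--Roch and Serre duality (for $c_1=-1$ one has $E^* \cong E(1)$ and $h^2(E(k))=h^1(E(-k-3))$), one gets $h^1(E(-1))=n$, $h^1(E(0))=3n-1$, and $h^1(E(k))=0$ for $k\leq -2$. Since $h^1(E(-2))=0$, every element of $H^1(E(-1))$ is a minimal generator, and by Lemma \ref{L-spe et gene H^1} there are no generators in degree $\geq 1$; hence $c$ equals the dimension of the cokernel of $\mu : V\otimes H^1(E(-1)) \to H^1(E(0))$, and the target inequality is equivalent to $\mathrm{rk}\,\mu \geq 2n+1-h^0(E(1))$.

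The core step is to bound $\mathrm{rk}\,\mu$ from below by restriction to a general line. Let $L=V(\ell_1,\ell_2)$ be a general line, so that $E_L\cong \oc_L\oplus\oc_L(-1)$. Tensoring the Koszul resolution of $\oc_L$ by $E$ and splitting it gives
$$0 \to E(-2) \xrightarrow{(\ell_2,-\ell_1)} 2.E(-1) \xrightarrow{(\ell_1,\ell_2)} \ic_L\otimes E(0) \to 0,$$
$$0 \to \ic_L\otimes E(0) \to E(0) \to E_L(0) \to 0.$$
Because $h^1(E(-2))=0$, the first sequence embeds $2.H^1(E(-1))$ into $H^1(\ic_L\otimes E(0))$ as the kernel of the connecting map $\partial$ to $H^2(E(-2))$; the second identifies $H^1(\ic_L\otimes E(0))\to H^1(E(0))$ with a surjection whose kernel is the one-dimensional image of the section of $E_L$. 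The composite $2.H^1(E(-1)) \xrightarrow{(\ell_1,\ell_2)} H^1(E(0))$ lands in $\mathrm{im}\,\mu$, and its kernel is the intersection of these two distinguished subspaces; I would show this intersection vanishes for general $L$ (equivalently, $\partial$ is nonzero on the section class), giving $\mathrm{rk}\,\mu \geq 2n$. The passage from $2n$ to $2n+1-h^0(E(1))$ comes from feeding in a third general form $\ell_3$: the space $\ell_3\cdot H^1(E(-1))$ escapes $\ell_1 H^1(E(-1)) + \ell_2 H^1(E(-1))$ unless a precise compatibility holds, and the only obstruction to enlarging the image is a global section of $E(1)$. Here I would use the identification, coming from $E^*\cong E(1)$, of the degree-zero generators of $H^1_*(E)$ with the degree $(-1)$ generators of $H^1_*(E(1))$, which when $h^0(E(1))=1$ is the (shifted) Hartshorne--Rao module of the vanishing curve $C$ of the section, via $0\to\oc\to E(1)\to\ic_C(1)\to 0$.

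For existence I would run the Serre correspondence in reverse: for each even $c_2=2n$ produce a curve $C$ of degree $2n$ with $\omega_C\cong\oc_C(-3)$ whose Rao module has exactly $n-1$ minimal generators in the relevant degree, so that the extension $0\to\oc\to E(1)\to\ic_C(1)\to 0$ yields a stable bundle $E$ with $c_1=-1$, $h^1(E(-2))=0$ and $c=n-1$; alternatively one builds the minimal monad $\tilde L_0^*(-1)\hookrightarrow\tilde L_1\twoheadrightarrow\tilde L_0$ with $\tilde L_0 = n.\oc(1)\oplus(n-1).\oc$ and checks that a general such monad has locally free, stable cohomology. The main obstacle is the sharp ``$+1$'': proving that the image of $\mu$ can fail to grow beyond $2n$ only in the presence of a section of $E(1)$. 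This is precisely where the restriction theorem (Theorem \ref{T-res}) and the self-duality $E^*\cong E(1)$ must be combined most delicately, and it is the step for which the companion paper \cite{EG-2} develops the full machinery; the ad hoc arguments of Proposition \ref{P-bd c n<4} suffice for the range $c_2\leq 6$ required in the present paper.
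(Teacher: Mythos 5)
Your setup is correct as far as it goes: the numerology $h^1(E(-1))=n$, $h^1(E)=3n-1$, the identification of $c$ with $\dim\mathrm{coker}\,\mu$, the reduction of the first two assertions to $\mathrm{rk}\,\mu\geq 2n+1-h^0(E(1))$, and the translation of injectivity of $(\ell_1,\ell_2)\colon 2.H^1(E(-1))\to H^1(E)$ into non-vanishing of the two-step connecting map on the section class are all sound. But the proof stops exactly where the theorem begins. The steps you introduce with ``I would show'' are the entire content. (i) The vanishing of $\mathrm{im}(j)\cap \mathrm{Ker}(p)$ for general $L$ is asserted with no argument; it implies $c\leq n-1$, so it is at least as strong as the first assertion, and it is precisely the hard point: the soft version of this restriction argument (which the paper itself records as a remark after Proposition \ref{P-bd c n<4}) only yields $\mathrm{rk}\,\mu\geq 2n-1$, i.e.\ $c\leq n$, because the one-dimensional section class in $H^1(E\otimes\ic_L)$ may a priori lie in the image of $2.H^1(E(-1))$, and nothing you write rules this out. (ii) The sharp ``$+1$'' -- that stagnation of the image under a third linear form forces $h^0(E(1))\neq 0$ -- is explicitly left open by you (``the main obstacle''), deferred to the machinery of \cite{EG-2}. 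Note that any such argument must genuinely detect the section, since the bound $\mathrm{rk}\,\mu\geq 2n+1$ really does fail when $h^0(E(1))=1$; no general-position or dimension count can give it. (iii) The existence assertion is likewise not proved: ``produce a curve $C$ of degree $2n$ with $\omega_C\cong\oc_C(-3)$ whose Rao module has exactly $n-1$ minimal generators in the relevant degree'', or ``check that a general such monad has locally free stable cohomology'', is a restatement of what is to be constructed, not a construction. So all three assertions remain unproven.

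For comparison: this paper does not prove Theorem \ref{T-c<n} either -- it defers the full statement to the forthcoming paper \cite{EG-2} -- and what it does prove (the range $4\leq c_2\leq 6$, the argument in fact covering $c_2\leq 8$, which is all that the classification needs) is obtained by a quite different, monad-theoretic route. From the display of the minimal monad one extracts an exact sequence $0\to K\to (n+c+1).\oc\to J\to 0$ with $J\subset n.\oc(1)$ and $c\leq h^0(K(1))\leq c+1$ (Lemma \ref{L-h0 K,N,E}); the bound $c\leq n-1$ then follows from the section count of Lemma \ref{L-F dans n.O Pk}, while the extremal case $\mathrm{rk}(J)=n$ is settled by Corollary \ref{C-rkJ=n}, where the local freeness of $\mathrm{Coker}(\tilde\beta)$ (Lemma \ref{L-cok bG tilde loc free}, resting on the self-duality of the monad) is what produces the refinement $h^0(E(1))=1$. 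If you wish to complete a proof along your line-restriction route, statement (i) above is the claim you must actually establish; as written, your text assumes it.
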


However to keep this paper self-contained we will proceed now to prove Proposition \ref{P-bd c n<4} with an ad-hoc argument (completely different from the one used in \cite{EG-2}), see Corollary \ref{C-pf Prop}.

Notice by the way that it is easy to get the bound $c \leq n$: let $L$ be a general line. By combining $0 \to \ic _L \to \oc \to \oc _L \to 0$, and
$0 \to \oc (-2) \to 2.\oc (-1) \to \ic _L \to 0$, twisted by $E$, we get $2.H^1(E(-1)) \stackrel{j}{\to} H^1(E \otimes \ic _L) \stackrel{p}{\to}H^1(E)$. Now $j$ is injective and $p$ is surjective with $Ker(p) = H^0(E_L)$. We conclude with Riemann-Roch.


\subsection{Negative instantons with small Chern classes.}\quad \\

let $E$ be a negative instanton, we set $n:=c_2/2$ and denote by $c$ the number of minimal generators of $H^1_*(E)$ of degree zero. We assume $c>0$. We know that $c\leq n$. The minimal monad is:

\begin{equation}
\label{eq: min monad}
n.\oc (-2)\oplus c.\oc (-1) \hookrightarrow (c+n+1).\oc (-1)\oplus (c+n+1).\oc \twoheadrightarrow c.\oc\oplus n.\oc (1)
\end{equation}

The display of the monad is:

$$\begin{array}{ccccccccc}
 & & & &0& &0 & & \\
 & & & &\downarrow& &\downarrow & & \\
0 & \to &c.\oc (-1)\oplus n.\oc (-2)& \to &\nc & \to &E&\to &0\\
 & & ||& &\downarrow& &\downarrow & & \\
0 & \to &c.\oc (-1)\oplus n.\oc (-2)& \stackrel{\bG}{\to} &(c+n+1).(\oc (-1)\oplus \oc ) & \to &\fc&\to &0\\
 & & & &\downarrow \aG& &\downarrow & & \\
 & & & &c.\oc \oplus n.\oc (1)& =&c.\oc \oplus n.\oc (1) & & \\
 & & & &\downarrow& &\downarrow & & \\
 & & & &0& &0 & & \end{array}$$
 
\noindent By minimality $\bG$ induces: 
\begin{equation}
\label{eq:bG tilde}
c.\oc (-1) \stackrel{\tilde \bG}{\hookrightarrow} (c+n+1).\oc
\end{equation} 
Also $\aG$ induces: 
\begin{equation}
\label{eq.aG tilde}
(c+n+1).\oc (-1) \stackrel{\tilde \aG}{\twoheadrightarrow} c.\oc
\end{equation}

The first main remark is:

\begin{lemma}
\label{L-cok bG tilde loc free}
With notations as above, $\ec := Coker\, (\tilde \bG )$ is locally free.
\end{lemma}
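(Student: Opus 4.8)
The plan is to show that $\tilde\bG$ is not merely injective as a map of sheaves but is a \emph{subbundle inclusion}, i.e.\ fibrewise injective at every point of $\Pt$; this is precisely the condition for $\ec=\mathrm{Coker}(\tilde\bG)$ to be locally free. Since $\tilde\bG$ is a $(c+n+1)\times c$ matrix of linear forms, the locus where $\ec$ fails to be free is exactly the degeneracy locus where the fibre rank of $\tilde\bG$ drops below $c$, so everything reduces to excluding such points.

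The key observation is that $\tilde\bG$ is a block of the full monad map $\bG$, and the complementary blocks have a convenient vanishing. Decompose $\bG$ according to the splittings $n.\oc(-2)\oplus c.\oc(-1)\to (c+n+1).\oc(-1)\oplus (c+n+1).\oc$. The block $c.\oc(-1)\to(c+n+1).\oc(-1)$ is given by constants, hence vanishes by minimality of the monad, while the block $c.\oc(-1)\to(c+n+1).\oc$ is by definition $\tilde\bG$. Thus, relative to these summands, $\bG$ is block lower triangular, with $\tilde\bG$ in the bottom-right corner and zero in the top-right corner.

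Now I would argue fibrewise. Fix $x\in\Pt$ and suppose $\tilde\bG_x(v)=0$ for some $v$ in the fibre of $c.\oc(-1)$ at $x$. Lift $v$ to the element $(0,v)$ of the fibre of $n.\oc(-2)\oplus c.\oc(-1)$ at $x$. Because the top-right block vanishes, $\bG_x(0,v)=(0,\tilde\bG_x(v))=0$. Since $\bG$ is a monad map, hence a subbundle inclusion, $\bG_x$ is injective; therefore $(0,v)=0$ and so $v=0$. This shows $\tilde\bG_x$ is injective for every $x$, so $\tilde\bG$ is a subbundle inclusion and $\ec$ is locally free.

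The only points requiring care are the two structural inputs, and this is where the argument would collapse without the minimal monad: that the block $c.\oc(-1)\to(c+n+1).\oc(-1)$ is zero (the use of minimality of the Horrocks monad), and that $\bG$ is fibrewise injective (the defining subbundle property of a monad, recorded by the symbol $\hookrightarrow$ in \eqref{eq: min monad}). Granting these, no computation with the entries of $\tilde\bG$ is needed: the triangular shape does all the work.
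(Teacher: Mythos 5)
Your proof is correct, and it takes a genuinely different route from the paper's. You deduce that $\tilde{\bG}$ is fibrewise injective from two facts: the constant block $c.\oc (-1)\to (c+n+1).\oc (-1)$ of $\bG$ vanishes by minimality (this is exactly what makes \eqref{eq:bG tilde} well defined), and $\bG$ itself is a subbundle inclusion. The paper argues by duality instead: since $E^*(-1)\simeq E$, dualizing the display identifies $\tilde{\bG}$ with $\tilde{\aG}^*(-1)$; as $\tilde{\aG}$ is a surjection of vector bundles, its kernel $\ac$ is locally free, and dualizing $0\to \ac \to (c+n+1).\oc (-1)\stackrel{\tilde{\aG}}{\to} c.\oc \to 0$ exhibits $\ec$ as (a twist of) $\ac ^*$, hence locally free. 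Your argument is more elementary and more general: it never uses the self-duality of $E$, so it would apply verbatim to the minimal monad of any bundle whose monad map is a subbundle inclusion, whereas the paper's argument is tied to normalized rank two bundles; in exchange, the paper's proof produces the extra identification of $\ec$ with the dual of $\ac =\ker \tilde{\aG}$, a symmetry between the two halves of the monad that your approach does not see (though only local freeness is used later, in Corollary \ref{C-rkJ=n}). One point worth tightening: your justification that $\bG$ is fibrewise injective (``the defining subbundle property of a monad'') is convention-dependent, since some authors only require sheaf-theoretic injectivity; in the present setting it is cleanest to read it off the display, where $\mathrm{Coker}(\bG )=\fc$ is an extension of $c.\oc \oplus n.\oc (1)$ by $E$, hence locally free, which forces $\bG _x$ to be injective on every fibre.
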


\begin{proof} By dualizing the display of the monad and since $E^*(-1) \simeq E$ we see that (up to isomorphism) $\aG ^*(-1) = \bG$ and also $\tilde \aG ^*(-1)=\tilde \bG$. Now we have an exact sequence:
$$0 \to \ac \to (c+n+1).\oc (-1) \stackrel{\tilde \aG}{\to} c.\oc \to 0$$
where $\ac$ is a vector bundle. By the above remark:
$$0 \to c.\oc (-1) \stackrel{\tilde \bG}{\to} (c+n+1).\oc \to \ec \simeq \ac ^* \to 0$$
\end{proof} 

The map $\tilde \aG$ yields the following commutative diagram:

$$\begin{array}{ccccccccc}
 & &0 & &0& &0 & & \\
 & &\downarrow & &\downarrow& &\downarrow & & \\
0& \to &K&\to& (c+n+1).\oc &\stackrel{\psi}{\to}& n.\oc(1)& & \\
 & &\downarrow & &\downarrow& &\downarrow & & \\
0& \to &\nc&\to& (c+n+1).(\oc(-1)\oplus \oc ) &\stackrel{\aG}{\to}& c.\oc \oplus n.\oc(1)&\to &0 \\
 & &\downarrow \lG & &\downarrow& &\downarrow & & \\
0& \to &\ac&\to& (c+n+1).\oc(-1) &\stackrel{\tilde \aG}{\to}& c.\oc&\to &0 \\
 & & & &\downarrow& &\downarrow & & \\
  & & & &0& &0 & & \end{array}$$

The map $\psi$ need not be surjective. The snake lemma applied to the two bottom row of the diagram shows that: $Coker(\lG )\simeq Coker(\psi )$. Let us define $J := Im(\psi )$.

\begin{lemma}
\label{L-h0 K,N,E}
With notations as above:\\
(i) $h^0(K)=h^0(\nc )=h^0(E)=0$\\
(ii) $h^0(K(1)) \leq h^0(\nc (1)) = c+h^0(E(1)) \leq c+1$\\
(iii) $h^0(K(1)) \geq c$.
\end{lemma}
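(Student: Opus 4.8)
The plan is to read all three assertions off the display of the monad (\ref{eq: min monad}) and the auxiliary diagram defining $K$, reducing each to one of three inputs: the vanishing $h^0(E)=0$ coming from stability, the restriction bound of Theorem \ref{T-res}, and---for the lower bound (iii)---the minimality of Horrocks' monad. No genuinely new construction is needed; it is a chase through the two diagrams plus the cohomology of line bundles on $\Pt$.

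For (i) I would use the top row of the display $0 \to c.\oc(-1)\oplus n.\oc(-2) \to \nc \to E \to 0$. Since $\oc(-1)$ and $\oc(-2)$ have no global sections and $h^0(E)=0$ ($E$ is stable with $c_1=-1$), the long exact sequence gives $h^0(\nc)=0$. The left column $0 \to K \to \nc \to \ac$ then yields $h^0(K)\le h^0(\nc)=0$, so all three vanish. For (ii) the same left column twisted by $1$ gives $h^0(K(1))\le h^0(\nc(1))$. Twisting the top row by $1$ produces $0 \to c.\oc \oplus n.\oc(-1) \to \nc(1) \to E(1)\to 0$; here $h^0(c.\oc\oplus n.\oc(-1))=c$ while $H^1(c.\oc\oplus n.\oc(-1))=0$, so $h^0(\nc(1))=c+h^0(E(1))$. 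Finally $h^0(E(1))\le h^0(E_H(1))\le 2+c_1=1$ by Theorem \ref{T-res} (lifting from the plane uses $h^0(E)=0$), giving the claimed bound $c+1$.

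The heart of the matter is (iii), and the key observation is that the first $c$ summands of the subbundle $c.\oc(-1)\oplus n.\oc(-2)\hookrightarrow \nc$ already lie in $K$. Indeed, the map $\nc \to \ac$ is the restriction to $\nc$ of the projection $(c+n+1).(\oc(-1)\oplus\oc)\twoheadrightarrow (c+n+1).\oc(-1)$ onto the degree $-1$ summands, followed by the inclusion $\ac\hookrightarrow (c+n+1).\oc(-1)$. The inclusion $c.\oc(-1)\hookrightarrow \nc$ is induced by $\bG$, so the composite $c.\oc(-1)\to (c+n+1).\oc(-1)$ is exactly the $\oc(-1)\to\oc(-1)$ block of $\bG$ on the $c.\oc(-1)$ factor, i.e.\ a matrix of scalars. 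By minimality of the monad this block vanishes. Hence $c.\oc(-1)\subset \ker(\nc\to\ac)=K$, and twisting by $1$ gives an injection $c.\oc\hookrightarrow K(1)$, whence $h^0(K(1))\ge h^0(c.\oc)=c$.

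The main obstacle is precisely locating this copy of $c.\oc(-1)$ inside $K$: it rests entirely on minimality (no nonzero constant entries between generators of equal degree in the resolution $\tilde L_0^*(c_1)\hookrightarrow\tilde L_1$), so the delicate point is to argue that the relevant block of $\bG$ is \emph{zero}, not merely of maximal rank or injective as a bundle map. Once that is granted, everything else is routine: stability kills $h^0$ in (i), the two short exact sequences compute $h^0(\nc(1))$ in (ii), and the restriction theorem supplies the final $\le 1$.
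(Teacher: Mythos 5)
Your proposal is correct and follows essentially the same route as the paper: (i) and (ii) are read off the display and the diagram defining $K$, using stability and $h^0(E(1))\leq 1$ from Theorem \ref{T-res}, exactly as the paper indicates. Your argument for (iii) is precisely the paper's: minimality forces the $\oc(-1)\to\oc(-1)$ block of $\bG$ to vanish (this is the content of (\ref{eq:bG tilde})), so the composite $c.\oc(-1)\to\nc\to(c+n+1).\oc(-1)$ is zero, and since $\ac$ injects into $(c+n+1).\oc(-1)$ the copy of $c.\oc(-1)$ factors through $K$, giving $h^0(K(1))\geq c$.
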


\begin{proof} The first two statements follow easily from the display of the monad and the diagram above (taking into account that $h^0(E(1)) \leq 1$ by Theorem \ref{T-res}).

For (iii) consider the following diagram:
$$\xymatrix{
c.\oc (-1) \ar@{^{(}->}[rrd]^i& &K \ar@{^{(}->}[d]\\
 & &\nc \ar@{^{(}->}[r]^>>>>j \ar [d]^p &(c+n+1).(\oc (-1)\oplus \oc) \ar@{->>}[d]^ \pi \\
 & &\ac \ar@{^{(}->}[r]^>>>>>>>s & (c+n+1).\oc (-1)}$$
We have $\pi \circ j\circ i=0$ by the monad. So $s\circ p \circ i=0$. Since $s$ is injective $p\circ i=0$ and $c.\oc (-1) \stackrel{i}{\hookrightarrow} \nc$ factors through $K$.
\end{proof}

\begin{corollary}
\label{C-rkJ=n} With notations as above, if $rk(J)=n$, then $c \leq n-1$. Moreover if $c=n-1$, then $h^0(E(1))=1$.
\end{corollary}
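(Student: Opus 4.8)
The hypothesis $rk(J)=n$ says precisely that $\psi$, equivalently the induced map $\oL{\psi}:\ec\to n.\oc(1)$ (where $\ec=Coker\,\tilde\bG$), is generically surjective. My plan is to analyze the kernel of $\oL{\psi}$: I would show it is a line bundle, compute its degree, and then read off the bound on $c$ by feeding that value into the cohomological sandwich $c\le h^0(K(1))\le c+1$ already supplied by Lemma \ref{L-h0 K,N,E}.

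First I would record a factorization. In the proof of Lemma \ref{L-h0 K,N,E}(iii) the inclusion $c.\oc(-1)\hookrightarrow\nc$ is shown to pass through $K$, and composing with $K\hookrightarrow(c+n+1).\oc$ recovers $\tilde\bG$; thus $c.\oc(-1)\hookrightarrow K\hookrightarrow(c+n+1).\oc$. Passing to the quotient by $c.\oc(-1)$, a short diagram chase identifies $K':=K/c.\oc(-1)$ with $\ker\oL{\psi}$, giving
$$0\to K'\to\ec\stackrel{\oL{\psi}}{\to}J\to 0.$$
Since $rk(\ec)=n+1$ and $rk(J)=n$, the sheaf $K'$ has rank one; being the kernel of a morphism of locally free sheaves it is reflexive, hence (on the smooth threefold $\Pt$) a line bundle $\oc(a)$.

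Next I would compute $a=c_1(K')=c_1(\ec)-c_1(J)$. From $0\to c.\oc(-1)\stackrel{\tilde\bG}{\to}(c+n+1).\oc\to\ec\to 0$ one reads off $c_1(\ec)=c$; and since $J\subseteq n.\oc(1)$ is a full-rank subsheaf, its cokernel is a codimension-one torsion sheaf of some degree $\delta\ge 0$, whence $c_1(J)=n-\delta$ and $a=c-n+\delta$. On the other hand, twisting $0\to c.\oc(-1)\to K\to K'\to 0$ by $\oc(1)$ and using $H^1(\oc)=0$ yields $h^0(K(1))=c+h^0(\oc(a+1))$. Now Lemma \ref{L-h0 K,N,E} gives $c\le h^0(K(1))\le c+1$, so $h^0(\oc(a+1))\le 1$, which forces $a\le -1$ (because $h^0(\oc(m))\ge 4$ for $m\ge 1$). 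Hence $c-n+\delta\le -1$ and, as $\delta\ge 0$, $c\le n-1$. In the borderline case $c=n-1$ these estimates are all equalities: $\delta=0$, $a=-1$, so $h^0(\oc(a+1))=h^0(\oc)=1$ and $h^0(K(1))=c+1$; comparing with the bound $h^0(K(1))\le c+h^0(E(1))$ of Lemma \ref{L-h0 K,N,E}(ii) forces $h^0(E(1))\ge 1$, and together with $h^0(E(1))\le 1$ (Theorem \ref{T-res}) this gives $h^0(E(1))=1$.

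I expect the delicate step to be the identification of $K'=\ker\oL{\psi}$ as a genuine line bundle, together with the exact Chern-class computation $c_1(\ec)=c$ and the inequality $c_1(J)\le n$; once $K'\simeq\oc(c-n+\delta)$ is in hand, the rest is the routine bookkeeping with the sandwich $c\le h^0(K(1))\le c+1$.
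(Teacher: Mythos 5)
Your proof is correct and follows essentially the same route as the paper: you form the quotient $K/c.\oc(-1)$, identify it with the rank-one kernel of $\ec \to J$, use local freeness of $\ec$ (Lemma \ref{L-cok bG tilde loc free}) and reflexivity to see it is a line bundle $\oc(a)$, bound $a$ and $c_1(J)$ exactly as the paper does, and close with the sandwich $c \leq h^0(K(1)) \leq c + h^0(E(1)) \leq c+1$ from Lemma \ref{L-h0 K,N,E}. The only cosmetic difference is that the paper splits $K = c.\oc(-1)\oplus\oc(a)$ and uses $h^0(K)=0$ to get $a<0$, whereas you avoid the splitting and extract $a\leq -1$ directly from the cohomology sequence of $0 \to c.\oc \to K(1) \to \oc(a+1)\to 0$.
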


\begin{proof} By Lemma \ref{L-h0 K,N,E} we have a commutative diagram:
$$\xymatrix{
0\ar [r] &c.\oc (-1)\ar [r] \ar@{=}[d] &K\ar [r] \ar@{^{(}->}[d]& \jc \ar [r]\ar@{^{(}->}[d]& 0\\
0\ar [r] &c.\oc (-1)\ar [r] &(c+n+1).\oc \ar [r]\ar@{->>}[d]&\ec \ar [r]\ar@{->>}[d]& 0\\
 & &J \ar@{=}[r]& J}$$
Since $rk(J)=n$, we get $rk(\jc )=1$. By Lemma \ref{L-cok bG tilde loc free} $\ec$ is locally free. Since $J$ is torsion free, $\jc$ is reflexive. So $\jc = \oc (a)$ and $K = c.\oc (-1)\oplus \oc (a)$. From $h^0(K)=0$ (Lemma \ref{L-h0 K,N,E}), we get $a<0$. Now $c_1(J)=-c_1(K)= c-a$. Since $c_1(J) \leq n$ (because $J \subset n.\oc (1)$), we have $c =c_1(J)+a < n$. Finally if $c=n-1$, the only possibility is $c_1(J)=n$, $a=-1$. So $h^0(K(1))=c+1$ and we conclude with Lemma \ref{L-h0 K,N,E}.
\end{proof} 

Now we have the following simple lemma:

\begin{lemma}
\label{L-F dans n.O Pk}
Let $\fc$ be a coherent sheaf of rank $r$ on $\Pk$, $k \geq 1$, such that $\fc \subset n.\oc _{\Pk}$. Then $h^0(\fc (m)) \leq r.h^0(\oc _{\Pk}(m))$, for every $m \in \bZ$. Moreover if there is equality for some $m \geq 0$, then $\fc = r.\oc _{\Pk}$.
\end{lemma}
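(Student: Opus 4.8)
The plan is to reduce everything to producing a single injection of $\fc$ into $r.\oc_{\Pk}$. First I would record that $\fc$ is torsion-free, being a subsheaf of the torsion-free sheaf $n.\oc_{\Pk}$. Let $\eta$ be the generic point and $K$ the function field of $\Pk$; then the stalk $\fc_\eta$ is an $r$-dimensional $K$-subspace of $K^n = (n.\oc_{\Pk})_\eta$. Any $\oc$-linear map $n.\oc_{\Pk} \to r.\oc_{\Pk}$ is given by a constant $r \times n$ matrix $A$ over the base field (since $\mathrm{Hom}(\oc_{\Pk},\oc_{\Pk}) = k$), and for a general such $A$ the induced linear map $K^n \to K^r$ restricts injectively to the $r$-dimensional subspace $\fc_\eta$. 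Composing the inclusion $\fc \hookrightarrow n.\oc_{\Pk}$ with this projection yields a map $\phi : \fc \to r.\oc_{\Pk}$ that is injective at $\eta$. Because $\fc$ is torsion-free, a map that is injective at the generic point has trivial kernel, so $\phi : \fc \hookrightarrow r.\oc_{\Pk}$ is injective.

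Given the injection $\phi$, the inequality is immediate: for every $m \in \bZ$ the induced map $H^0(\fc(m)) \hookrightarrow H^0(r.\oc_{\Pk}(m)) = r.H^0(\oc_{\Pk}(m))$ is injective, whence $h^0(\fc(m)) \leq r.h^0(\oc_{\Pk}(m))$.

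For the equality statement, suppose $h^0(\fc(m)) = r.h^0(\oc_{\Pk}(m))$ for some $m \geq 0$. Then the injection $H^0(\fc(m)) \hookrightarrow H^0(r.\oc_{\Pk}(m))$ is an isomorphism, i.e. every global section of $r.\oc_{\Pk}(m)$ already lies in the image subsheaf $\phi(\fc)(m)$. Since $m \geq 0$, the sheaf $r.\oc_{\Pk}(m)$ is globally generated, so the evaluation map $H^0(r.\oc_{\Pk}(m)) \otimes \oc_{\Pk} \to r.\oc_{\Pk}(m)$ is surjective; but this map factors through $\fc(m)$, forcing $\fc(m) = r.\oc_{\Pk}(m)$ and hence $\phi$ to be an isomorphism $\fc \simeq r.\oc_{\Pk}$.

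The individual steps are all short; the only point requiring genuine care is the construction of $\phi$, namely checking that a general constant projection $K^n \to K^r$ is injective on the fixed $r$-dimensional subspace $\fc_\eta$ (equivalently, that some $r \times r$ minor of a spanning matrix of $\fc_\eta$ is nonzero, which holds precisely because $\fc_\eta$ has dimension exactly $r$). Once $\phi$ exists, torsion-freeness upgrades generic injectivity to genuine injectivity, and the global-generation argument closes the equality case with no computation. An alternative, should one prefer to avoid the generic-point discussion, is induction on $k$: the case $k=1$ follows from the splitting of a torsion-free sheaf on $\Pun$ as $\bigoplus \oc(a_i)$ with each $a_i \leq 0$ (the latter forced by the embedding into $n.\oc_{\Pun}$), and a general hyperplane section reduces $\Pk$ to $\mathbb{P}^{k-1}$; but the direct argument above is cleaner and avoids the restriction subtleties.
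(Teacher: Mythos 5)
Your proof is correct, and it takes a genuinely different route from the paper's. You build, once and for all, an injection $\phi:\fc\hookrightarrow r.\oc_{\Pk}$, by composing $\fc\hookrightarrow n.\oc_{\Pk}$ with a constant projection $n.\oc_{\Pk}\to r.\oc_{\Pk}$ that is injective on the generic stalk $\fc_\eta$ (indeed one may simply take the coordinate projection onto an $r$-element set of indices where a spanning matrix of $\fc_\eta$ has a nonvanishing $r\times r$ minor, which exists exactly because $\dim_K\fc_\eta=r$); torsion-freeness of $\fc$ then upgrades generic injectivity to injectivity, the inequality for every twist is immediate, and equality for some $m\geq 0$ forces $\phi(m)$ to be surjective because the evaluation map of the globally generated sheaf $r.\oc_{\Pk}(m)$ factors through it. The paper argues instead by double induction on $(k,m)$: the case $k=1$ uses the splitting $\fc=\bigoplus\oc_{\Pun}(a_i)$ with $a_i\leq 0$; the inductive step restricts to a general hyperplane $H$, using $h^0(\fc(-1))=0$ and the sequence $0\to\fc(m-1)\to\fc(m)\to\fc_H(m)\to 0$; and in the equality case it descends to $h^0(\fc)=r$ and shows that the cokernel $\gc$ of the evaluation map $r.\oc\to\fc$ embeds in $(n-r).\oc$ while having rank zero, hence vanishes. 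Your approach buys a stronger intermediate statement (a single embedding $\fc\hookrightarrow r.\oc_{\Pk}$ controlling all twists at once), avoids the hyperplane-restriction technicalities (for general $H$ one must check that $\fc_H\subset n.\oc_H$ remains injective and that the displayed sequence is exact on the left), and works verbatim on any integral projective variety; the paper's induction stays entirely within the hyperplane-restriction technique used throughout the paper and never needs to discuss generic stalks. The only point in your write-up deserving the extra line you essentially give it is that the nonvanishing condition $\det(AM)\neq 0$ lives over the function field $K$ while $A$ has entries in the base field; this is harmless since the base field is infinite (or is bypassed entirely by taking $A$ to be the coordinate projection).
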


\begin{proof} We make a double induction on $k,m$. If $k=1$, $\fc = \bigoplus _{i=1}^r \oc _{\Pun}(a_i)$ with $a_i \leq 0$ and the statement follows immediately. Assume the Lemma proved for $k-1$. Since $\fc \subset n.\oc _{\Pk}$, $h^0(\fc (-1))=0$. Let $H$ be a general hyperplane. We have $\fc _H \subset n.\oc _H$ and an exact sequence $0\to \fc (m-1) \to \fc (m) \to \fc _H(m)\to 0$. We get $h^0(\fc ) \leq h^0(\fc _H)\leq r$. Then we conclude by induction on $m, m\geq 0$.

If $h^0(\fc (m))=r.h^0(\oc _{\Pk}(m))$ for some $m \geq 0$, then by descending induction $h^0(\fc )=r$. The evaluation map yields $0 \to r.\oc \to \fc \to \gc \to 0$. The inclusion $r.\oc \hookrightarrow n.\oc$ shows that $\gc \hookrightarrow (n-r).\oc$. Since $\gc$ has rank zero, it follows that $\gc =0$. 
\end{proof}

By considering $\fc = r.\oc _{\Pk}$ we see that the Lemma is sharp.

Now we turn back to $\Pt$ and the application we had in mind, i.e. the proof of Proposition \ref{P-bd c n<4}:

\begin{corollary}
\label{C-pf Prop}
(1) Let $0 \to K \to (n+c+1).\oc \to J \to 0$, be an exact sequence with $J \subset n.\oc (1)$. Assume $h^0(K(1)) \leq c+1$, with $c\leq n$. If $n \geq 2$, then $rk(J) \geq 2$. Moreover if $2 \leq n \leq 4$, then $c \leq n-1$ or $rk(J)=n$.\\
(2) Let $E$ be a negative instanton with $4\leq c_2 \leq 8$, then $c \leq c_2/2 -1$, where $c$ is the number of minimal generators of degree zero of $H^1_*(E)$. Moreover if $c_2=4$ and $c=1$, then $h^0(E(1))=1$.
\end{corollary}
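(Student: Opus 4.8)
The plan is to prove the two parts of Corollary \ref{C-pf Prop} in sequence, using part (1) as the technical engine for part (2).

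\medskip

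\noindent\textbf{Strategy for part (1).} Starting from the exact sequence $0 \to K \to (n+c+1).\oc \to J \to 0$ with $J \subset n.\oc(1)$, I first record the obvious rank identity: $rk(K) = n+c+1 - rk(J)$. The heart of the argument is to combine the hypothesis $h^0(K(1)) \leq c+1$ with a lower bound for $h^0(K(1))$ coming from the geometry of the sequence. Since $J \subset n.\oc(1)$, Lemma \ref{L-F dans n.O Pk} (applied on $\Pt$, i.e. $k=3$) gives $h^0(J(m)) \leq rk(J)\cdot h^0(\oc(m+1))$; in particular the twist by $1$ yields a ceiling on $h^0(J(1))$, and hence (via the long exact sequence in cohomology, using $h^0((n+c+1).\oc(1)) = 4(n+c+1)$) a \emph{lower} bound on $h^0(K(1))$. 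Confronting this lower bound with the hypothesized upper bound $h^0(K(1))\le c+1$ should force $rk(J)\ge 2$ once $n\ge 2$: a rank-one subsheaf $J$ of $n.\oc(1)$ would be too ''thin'' to absorb enough sections of the trivial bundle, leaving $K(1)$ with more than $c+1$ sections. For the refinement when $2\le n\le 4$, I would run the same section count more carefully by rank: assuming $c\ge n$ (the case we want to exclude unless $rk(J)=n$) and $rk(J)$ strictly between $2$ and $n-1$, the numerical inequalities among $n$, $c$, $rk(J)$ and the $h^0$-ceilings should become contradictory, pinning $rk(J)$ to the extreme value $n$.

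\medskip

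\noindent\textbf{Strategy for part (2).} Here $E$ is a negative instanton with $4\le c_2\le 8$, so $n=c_2/2\in\{2,3,4\}$ and the minimal monad is (\ref{eq: min monad}). The diagram preceding Lemma \ref{L-h0 K,N,E} produces exactly the exact sequence $0\to K\to (c+n+1).\oc \to J\to 0$ with $J=Im(\psi)\subset n.\oc(1)$ needed for part (1), and Lemma \ref{L-h0 K,N,E}(ii) supplies the hypothesis $h^0(K(1))\le c+1$. Assuming for contradiction that $c\ge n$, and recalling that we already know $c\le n$, we are in the boundary case $c=n$. Part (1) then forces $rk(J)=n$ (since $rk(J)\ge 2$ and the only remaining option in the range $n\le 4$ is $rk(J)=n$), and Corollary \ref{C-rkJ=n} immediately gives $c\le n-1$, contradicting $c=n$. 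This establishes $c\le c_2/2-1$. For the final clause, when $c_2=4$ we have $n=2$ and the claim is $c\le 1$, which is the bound just proved; the case $c=1=n-1$ is precisely the equality case of Corollary \ref{C-rkJ=n}, which yields $h^0(E(1))=1$.

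\medskip

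\noindent\textbf{Main obstacle.} The delicate point is part (1): extracting the \emph{sharp} numerical inequality that simultaneously forces $rk(J)\ge 2$ in general and collapses the intermediate ranks when $n\le 4$. The upper bounds from Lemma \ref{L-F dans n.O Pk} are clean, but I must be careful that the section-counting in the long exact cohomology sequence of the twisted short exact sequence is tight enough — the inequality $h^0(K(1))\le c+1$ is a genuine constraint only because $c\le n$ is small, so the argument cannot afford slack. I expect the verification that the case $3 = rk(J)$ (relevant when $n=4$) is incompatible with $c\ge n$ to require the most careful bookkeeping, possibly needing the equality clause of Lemma \ref{L-F dans n.O Pk} (forcing $J$ to be a trivial summand, hence splitting the sequence) to rule out the borderline configuration.
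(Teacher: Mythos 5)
Your proposal follows the paper's proof essentially verbatim: twisting by $1$ and combining $4(n+c+1) \le h^0(K(1)) + h^0(J(1)) \le (c+1) + 10\,rk(J)$ (the last bound from Lemma \ref{L-F dans n.O Pk}) gives $4n+3c+3 \le 10\,rk(J)$, which yields both claims of (1), and (2) then follows from (1) together with Corollary \ref{C-rkJ=n}, exactly as you outline. The borderline case you flag ($n=4$, $c=4$, $rk(J)=3$) is killed by this same inequality ($31 \le 30$ is false), so the equality clause of Lemma \ref{L-F dans n.O Pk} is never needed.
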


\begin{proof} (1) We have $h^0((n+c+1).\oc (1)) \leq h^0K(1))+h^0(J(1))$, hence $4(n+c+1) \leq c+1+h^0(J(1))$. By Lemma \ref{L-F dans n.O Pk}: $h^0(J(1)) \leq 10r$, where $r:=rk(J)$. It follows that $4n+3c+3 \leq 10r$. Hence $r \geq 2$ if $n \geq 2$.

If $c=n$ we get $7n+3 \leq 10r$. If $r \leq n-1$, then $13\leq 3n$, hence $n \geq 5$.\\
(2) Follows from (1) above and Corollary \ref{C-rkJ=n}. 
\end{proof}

\section{Stable bundles with $c_1=-1$ and $b=3$.}

In this section $E$ will denote a stable rank two vector bundle on $\Pt$ with Chern classes $(-1,c_2)$ and with $b(E)=3$. For such a bundle we have:

\begin{lemma}
\label{L-c1=-1 first}
With notations as above $h^1(E(-2))=0$, $h^1(E(3))=0$ and $c_2 \leq 6$.
\end{lemma}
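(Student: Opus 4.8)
The plan is to prove Lemma \ref{L-c1=-1 first} by assembling the vanishing statements from the general results established earlier and then squeezing $c_2$ through a Riemann--Roch inequality. Since $E$ is stable with $c_1=-1$ and $b(E)=3$, Proposition \ref{P-generalities b(E)}(3) immediately gives $h^1(E(-k))=0$ for $k \geq b=3$; in particular $h^1(E(-3))=0$. The point $h^1(E(-2))=0$ is exactly the content of the boundary case: since $-2 = -b+1$ when $b=3$, part (4) of Proposition \ref{P-generalities b(E)} applies (provided $c_2 \geq 4$, which I will need to handle separately for small $c_2$) and yields $h^1(E(-b+1)) = h^1(E(-2)) = 0$. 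Thus $E$ is forced to be a negative instanton in the sense of Definition \ref{D-neg instanton}, with spectrum $\{-1^{c_2/2}, 0^{c_2/2}\}$.

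Next I would extract the upper vanishing $h^1(E(3))=0$. By Lemma \ref{L-spe et gene H^1}, with $k_+ = 0$ the maximum value of the spectrum and $c_1=-1$, the module $H^1_*(E)$ is generated in degrees $\leq k_+ - c_1 - 1 = 0$. Feeding this into Proposition \ref{P-generalities b(E)}(2) with $\aG = 0$ the top generator degree and $b(E)=3$ gives $h^1(E(n))=0$ for $n \geq \aG + b = 3$, hence $h^1(E(3))=0$ in particular.

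Having both $h^1(E(-2))=0$ and $h^1(E(3))=0$, the bound on $c_2$ follows from a cohomological count. The idea is that once $h^1(E(3))=0$, and using $h^2$- and $h^3$-vanishing in that range (from stability and Serre duality, so that $\chi(E(3))$ equals $h^0(E(3))\geq 0$), the inequality $\chi(E(3)) \geq 0$ becomes a linear constraint on $c_2$. Computing $\chi(E(3))$ by Riemann--Roch for the normalized bundle with $c_1=-1$ yields a quantity that is nonnegative precisely when $c_2 \leq 6$, giving the desired bound. This parallels exactly the argument of Lemma \ref{L-bd c2 c1=0} in the $c_1=0$ case.

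The main obstacle I anticipate is the applicability of Proposition \ref{P-generalities b(E)}(4) and Theorem \ref{T-res}, both of which carry the hypothesis $c_2 \geq 4$. For small $c_2$ (namely $c_2 = 2$, the only even value below $4$ consistent with stability and $c_1=-1$) I cannot invoke part (4) directly, so the vanishing $h^1(E(-2))=0$ must be argued by hand or dispatched by the separate observation that such bundles fall under the already-understood low-$c_2$ classification. The cleanest route is probably to first establish the $c_2 \leq 6$ bound under the standing assumption $c_2 \geq 4$ via the Riemann--Roch count, and then to treat $c_2 = 2$ as an exceptional small case. The only genuinely delicate checkpoint is confirming that the spectrum really is $\{-1^{c_2/2},0^{c_2/2}\}$, since this underlies the generation-degree estimate; but this is forced by $h^1(E(-2))=0$ together with the standard spectrum properties for stable bundles with $c_1=-1$.
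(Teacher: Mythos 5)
Your proposal follows the paper's own proof almost verbatim: Proposition \ref{P-generalities b(E)}(4) gives $h^1(E(-2))=0$, then Lemma \ref{L-spe et gene H^1} gives generation of $H^1_*(E)$ in degrees $\leq 0$, Proposition \ref{P-generalities b(E)}(2) gives $h^1(E(3))=0$, and $\chi(E(3))=30-\tfrac{9c_2}{2}\geq 0$ yields $c_2\leq 6$. If anything, you are slightly more careful than the paper, which applies part (4) without comment despite its $c_2\geq 4$ hypothesis (for $c_2=2$ the vanishing $h^1(E(-2))=0$ is automatic from the Hartshorne--Sols description used in Lemma \ref{c1=-1 c2=2 stable}), and note that only $h^1$- and $h^3$-vanishing are actually needed for $\chi(E(3))\geq 0$, since $h^2$ enters with a positive sign.
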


\begin{proof} By Proposition \ref{P-generalities b(E)} (iv), $h^1(E(-2))=0$. In particular (Lemma \ref{L-spe et gene H^1}) $H^1_*(E)$ is generated in degrees $\leq 0$. By Proposition \ref{P-generalities b(E)} (ii), we get $h^1(E(3))=0$. Since $h^3(E(3))=0$ it follows that $\chi (E(3)) \geq 0$. Since $\chi (E(3))= 30-(9c_2)/2$, we get $c_2 \leq 6$.
\end{proof}

Since $c_2$ is even we are left with three cases: $c_2\in \{2, 4, 6\}$.

\begin{lemma}
\label{c1=-1 c2=2 stable}
Every stable rank two vector bundle, $E$, with $c_1=-1, c_2=2$ has $b(E)=d(E)=3$.
\end{lemma}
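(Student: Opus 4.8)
The plan is to analyze the bundle $E$ with $c_1=-1$, $c_2=2$ completely via its cohomology, which should be rigidly determined by stability. First I would compute the Euler characteristics. Since $E$ is stable with $c_1=-1$, Riemann--Roch gives $\chi(E(k))$ as an explicit cubic in $k$; for $c_2=2$ these numbers are small and I would tabulate $\chi(E(k))$ for the relevant twists $k \in \{-2,-1,0,1,2,3\}$. Stability forces $h^0(E)=0$ (indeed $h^0(E(k))=0$ for $k\le 0$), and by Serre duality, using $E^*\simeq E(1)$ for a normalized $c_1=-1$ bundle, we get $h^3(E(k))=h^0(E(-k-3)(1))=h^0(E(-k-2))$, which vanishes for $k\ge -2$. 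So in the middle range only $h^1$ and $h^2$ can be nonzero.

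Next I would pin down the spectrum. Since $c_2=2$ is small, the spectrum must be $\{-1,0\}$ (i.e. $\{-1^{c_2/2},0^{c_2/2}\}$ with $c_2/2=1$), so $E$ is a negative instanton: in particular $h^1(E(-2))=0$ by Definition~\ref{D-neg instanton}. From the spectrum and Lemma~\ref{L-spe et gene H^1}, $H^1_*(E)$ is generated in degrees $\le 0$, and the number $c$ of degree-zero generators satisfies $c\le c_2/2=1$. I would then invoke Corollary~\ref{C-pf Prop}(2) (the case $c_2=4$) carefully — but for $c_2=2$ the crude bound $c\le n=1$ is what we have, and I expect that a direct computation shows $c=0$, so $H^1_*(E)$ is generated purely in degree $-1$. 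This would give $d(E)=b(E)$ by Proposition~\ref{P-generalities b(E)}(1).

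The main work is then to compute the diameter $d(E)$. With the spectrum fixed I would show $h^1(E(-2))=0$, $h^1(E(-1))\neq 0$, $h^1(E(0))\neq 0$, $h^1(E(1))\neq 0$, and $h^1(E(2))=0$, so that the nonzero pieces of $H^1_*(E)$ sit exactly in degrees $-1,0,1$, giving $c'=-1$, $c=1$, and hence $d(E)=c-c'+1=3$. The vanishing $h^1(E(-2))=0$ is the negative-instanton condition; the vanishing $h^1(E(2))=0$ should follow from $\chi(E(2))\ge 0$ together with $h^2(E(2))=0$ (by Serre duality as above) and $h^0(E(2))$ computed from Riemann--Roch and the fact that $E$ has a section after a suitable twist. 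The nonvanishing in degrees $-1,0,1$ follows because the relevant $\chi(E(k))$ is negative there while $h^0=h^3=0$, forcing $h^1>0$ (here $h^2(E(k))=h^0(E(-k-2))$ also vanishes in this range). Given $d(E)=3$ and $d(E)=b(E)$, we conclude $b(E)=3$.

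The hard part will be verifying the nonvanishing of $h^1(E(1))$ and the vanishing of $h^1(E(2))$ with the correct multiplicities, since these depend on the exact values from Riemann--Roch and on ruling out any degree-zero generator $c$. In fact the cleanest route may be to bypass the multiplicity question: since any stable bundle with $c_1=-1$, $c_2=2$ is a negative instanton, Theorem~\ref{T-Buraggina} (connectedness) guarantees the nonzero pieces form a contiguous block, so once I establish $c'=-1$, the vanishing at degree $2$ (from $\chi(E(2))\ge 0$ and the duality vanishings), and a single nonvanishing somewhere in $\{0,1\}$, connectedness fills in the rest and forces $d(E)=3$. I would therefore lean on Theorem~\ref{T-Buraggina} and Proposition~\ref{P-generalities b(E)} to minimize the explicit numerics, reducing the proof to a short Euler-characteristic check at the two endpoints of the block.
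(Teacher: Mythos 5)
Your proposal has two genuine gaps, both sitting exactly where the paper instead invokes an external computation (the paper's proof is essentially a citation: by \cite{Ha-Sols}, Prop.~2.2, $H^1_*(E)$ is concentrated in degrees $-1,0,1$ with dimensions $1,2,1$, and it is, up to twist, the Hartshorne--Rao module of two disjoint conics, hence generated in lowest degree, so Proposition~\ref{P-generalities b(E)}(1) gives $b=d=3$). First, your claim $c=0$ (no degree-zero generator) is only asserted: ``I expect that a direct computation shows $c=0$'' is the crux, not a detail. None of the paper's self-contained tools cover it: Proposition~\ref{P-bd c n<4} and Corollary~\ref{C-pf Prop}(2) are stated for $c_2\geq 4$, and their proofs go through Lemma~\ref{L-h0 K,N,E}, which uses Theorem~\ref{T-res}, itself valid only for $c_2\geq 4$. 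Without $c=0$ you cannot apply Proposition~\ref{P-generalities b(E)}(1), and your whole route to $b(E)=d(E)$ collapses.

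Second, your Euler-characteristic endpoint checks fail numerically. Riemann--Roch gives $\chi(E(-1))=-1$, $\chi(E)=-2$, but $\chi(E(1))=0$ and $\chi(E(2))=7$. So at degree $1$ there is no negative $\chi$ forcing $h^1(E(1))\neq 0$ (in fact $h^1(E(1))=h^0(E(1))$, and the nonvanishing of $h^0(E(1))$ is precisely a nontrivial fact from \cite{Ha-Sols}); and at degree $2$, positive $\chi$ together with $h^2=h^3=0$ only yields $h^0(E(2))=7+h^1(E(2))$, which can never force $h^1(E(2))=0$ --- a positive Euler characteristic rules out nothing about $h^1$. Your fallback via Theorem~\ref{T-Buraggina} does not repair this: connectedness makes the nonzero pieces contiguous but cannot extend the block rightward, so with nonvanishing known only in degrees $-1,0$ and vanishing at $2$, the diameter could still be $2$ as far as your argument goes. (The degree-$1$ nonvanishing is rescuable within the paper: by Theorem~\ref{T-b<3} a stable bundle with $c_1=-1$ cannot have $b(E)\leq 2$, so $d(E)\geq b(E)\geq 3$, and connectedness then forces $h^1(E(1))\neq 0$; but the vanishing $h^1(E(2))=0$ and the generation statement $c=0$ still require the Hartshorne--Sols input or an equivalent monad computation you have not carried out.) As a minor point, your Serre duality formulas are off by a twist: with $E^*\simeq E(1)$ one gets $h^3(E(k))=h^0(E(-k-3))$ and $h^2(E(k))=h^1(E(-k-3))$, not $h^0(E(-k-2))$; the vanishing ranges you claim happen to remain valid, but the identities as written are incorrect.
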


\begin{proof} We have (\cite{Ha-Sols} Prop. 2.2) that $H^1_*(E)$ is concentrated in degrees -1, 0, 1 with $h^1(E(-1))=h^1(E(1))=1$, $h^1(E)=2$. The module $H^1_*(E)$ is isomorphic (up to twist) to the Hartshorne-Rao module of the disjoint union of two conics, such a module is generated by its lowest degree piece.
\end{proof}

Concerning the case $c_2=4$ we first recall (see \cite{Banica-Manolache}):

\begin{lemma}
\label{L-BanicaMano}
Let $E$ be a stable rank two vector bundle with $c_1=-1, c_2=4$, then $h^1(E(2)) = h^0(E(2)) \neq 0$.
\end{lemma}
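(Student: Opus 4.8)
The plan is to pin down which of the groups $H^i(E(2))$ can survive, reduce the equality $h^1(E(2))=h^0(E(2))$ to a Riemann--Roch computation, and then to recognize that the only real content is the \emph{nonvanishing}. First I would compute $\chi$: for a normalized rank two bundle with $c_1=-1$ one gets $\chi(E(t))=\tfrac16\bigl(2t^3+9t^2+(13-6c_2)t+6-9c_2\bigr)$, so for $c_2=4$ this gives $\chi(E(2))=0$. Since $E$ has rank two and $c_1=-1$ we have $E^*\simeq E(1)$, hence by Serre duality $h^3(E(2))=h^0(E(-5))=0$ (stability) and $h^2(E(2))=h^1(E(-5))=0$ by Proposition \ref{P-generalities b(E)}(3) (as $5\geq b(E)=3$). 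Therefore $\chi(E(2))=0$ forces $h^0(E(2))=h^1(E(2))$. The same bookkeeping at $t=1$, using $h^2(E(1))=h^1(E(-4))=0$ and $h^3(E(1))=0$, yields $h^1(E(1))=h^0(E(1))+5$, and at $t=0,-1$ gives $h^1(E)=5$, $h^1(E(-1))=2$; so the module is already substantial in degrees $-1,0,1$.

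It remains to prove $h^0(E(2))\neq 0$, and here I would first reduce to a cleaner target: it suffices to show $h^0(E(1))\neq 0$, because multiplying a nonzero section of $E(1)$ by a nonzero linear form produces a nonzero section of $E(2)$ (as $E(2)$ is torsion free). In fact the two statements are equivalent under $b(E)=3$: if $H^1_*(E)$ were generated in degree $-1$ only (i.e.\ if the number $c$ of degree zero generators were $0$), then by Lemma \ref{L-c1=-1 first} and Theorem \ref{T-Buraggina} the module would sit in degrees $\{-1,0,1\}$ and $H^1(E(2))=\frak m^3.H^1(E(-1))=0$ by $b(E)=3$, whence $h^0(E(2))=0$ and also $h^0(E(1))=0$. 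Thus the whole lemma is equivalent to showing that $H^1_*(E)$ carries a minimal generator in degree zero, i.e.\ $c\geq 1$. Combined with Corollary \ref{C-pf Prop} (which gives $c\leq 1$) and Corollary \ref{C-rkJ=n} (which gives $h^0(E(1))=1$ once $c=1$), the entire problem collapses to \emph{excluding the case $c=0$} for a stable bundle with $c_1=-1,c_2=4$.

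I expect this last exclusion to be the genuine obstacle, and I would attack it through the minimal monad (\ref{eq: min monad}) with $n=c_2/2=2$, together with its self-duality $\tilde\alpha=\tilde\beta^*(-1)$ from Lemma \ref{L-cok bG tilde loc free}: the point is to show that a degree $-1$--generated module is incompatible with the internal structure of this particular monad, i.e.\ that the constrained maps force a degree zero generator (equivalently $h^0(E(1))\neq 0$). The hard part is precisely that the soft invariants do not suffice: the diameter argument is inconclusive, since a module with Hilbert function $(2,5,5)$ in degrees $\{-1,0,1\}$ is perfectly compatible with $b(E)=3$; and the restriction theorem (Theorem \ref{T-res}) only bounds $h^0(E_H(1))\leq 1$ and controls the kernel of $\cdot H$ on $H^1(E)$, which does not obstruct such a module. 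One therefore has to use the self-dual monad for $n=2$ in an essential way — this is exactly the Banica--Manolache analysis — to manufacture the section of $E(1)$ and conclude $h^1(E(2))=h^0(E(2))\neq 0$.
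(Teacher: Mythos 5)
First, a point of comparison: the paper itself gives \emph{no} proof of this lemma --- it is recalled verbatim from \cite{Banica-Manolache} (``we first recall''), so your attempt has to stand on its own rather than be measured against an internal argument. The easy half of your proposal is correct in substance: $\chi(E(2))=0$ by Riemann--Roch, $h^3(E(2))=h^0(E(-5))=0$ by Serre duality and stability, and $h^2(E(2))=h^1(E(-5))=0$, giving $h^0(E(2))=h^1(E(2))$. One flaw: you justify $h^1(E(-5))=0$ via Proposition \ref{P-generalities b(E)}(3) ``as $5\geq b(E)=3$'', thereby importing the hypothesis $b(E)=3$, which is not in the statement; the lemma is applied in the subsequent Proposition to an \emph{arbitrary} stable bundle with $c_1=-1$, $c_2=4$, before anything is known about its Buchsbaum index. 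This is fixable without the extra hypothesis: the spectrum of such a bundle is contained in $\{-2,-1,0,1\}$, so $h^1(E(-5))=\sum_i h^0(\mathcal{O}_{\mathbb{P}^1}(k_i-4))=0$ by \cite{SRS}, 7.1.

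The genuine gap is the nonvanishing $h^0(E(2))\neq 0$, which is the entire content of the lemma, and which you do not prove. You reduce it --- again only under the added hypothesis $b(E)=3$ --- to the claim that $H^1_*(E)$ has a minimal generator in degree zero ($c\geq 1$), and then you explicitly defer that claim to ``the Banica--Manolache analysis'' of the self-dual monad, i.e.\ to the very reference being cited, without carrying it out. Moreover, the reduction runs the paper's logic backwards: in the Proposition that follows, the case $c=0$ is disposed of precisely by invoking the \emph{unconditional} nonvanishing (generation in degree $-1$ gives $b=d$, and $h^1(E(2))\neq 0$ gives $d\geq 4$, hence $b\geq 4$). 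Your route would instead require an independent proof that $c=0$ is incompatible with $b(E)=3$ --- exactly the step you leave open --- so the reformulation restates the difficulty rather than resolving it. Note also that excluding $c=0$ cannot work unconditionally: Corollary \ref{C-pf Prop} bounds $c\leq 1$ but in no way forces $c\geq 1$, and a bundle with $c=0$ would not contradict the lemma (it would simply have $b=d\geq 4$); the exclusion you need only makes sense inside the class of bundles with $b(E)=3$, a class the paper ultimately proves to be empty. In short: you prove the equality, and you correctly locate the hard point, but the nonvanishing --- the heart of the lemma --- remains unproven.
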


\begin{proposition}
Let $E$ be a stable rank two vector bundle with $c_1=-1, c_2=4$. Then $b(E)\geq 4$.
\end{proposition}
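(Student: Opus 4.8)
The goal is to show that a stable rank two bundle $E$ with $c_1=-1$, $c_2=4$ satisfies $b(E)\geq 4$. The strategy is to assume $b(E)=3$ and derive a contradiction using the structural results already assembled. By Lemma \ref{L-c1=-1 first}, assuming $b(E)=3$ forces $h^1(E(-2))=0$ and $h^1(E(3))=0$, so $E$ is a negative instanton with $c_2=4$, hence $n:=c_2/2=2$. The key tension I would exploit is Lemma \ref{L-BanicaMano}: for such a bundle $h^1(E(2))=h^0(E(2))\neq 0$. Since $H^1_*(E)$ is generated in degrees $\leq 0$ (Lemma \ref{L-spe et gene H^1}) and $\mathfrak m^3$ annihilates it, a nonzero class in $H^1(E(2))$ would have to be reached from a generator in degree $\leq -1$ by multiplication by a degree-$\geq 3$ element — but such products vanish by $b(E)=3$ unless the generator sits in degree exactly $0$. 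So the nonvanishing of $h^1(E(2))$ should force the existence of a degree-zero generator; I expect this to pin down $c\geq 1$.

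First I would make the reduction precise: suppose $b(E)=3$, so $E$ is a negative instanton with $n=2$ and $H^1_*(E)$ generated in degrees $\{-1,0\}$. Write $c$ for the number of degree-zero generators. If $c=0$, then $H^1_*(E)$ is generated in degree $-1$, and by Proposition \ref{P-generalities b(E)}(2) (with $\alpha=-1$ and $b=3$) we get $h^1(E(n))=0$ for $n\geq 2$, i.e. $h^1(E(2))=0$, directly contradicting Lemma \ref{L-BanicaMano}. Hence $c\geq 1$. Now apply Proposition \ref{P-bd c n<4}: for a negative instanton with $c_2=4$ one has $c\leq c_2/2-1=1$, so $c=1$ exactly, and the proposition's second clause gives $h^0(E(1))\neq 0$. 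This is the crux: combine it with stability.

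The contradiction I would aim for comes from stability versus $h^0(E(1))\neq 0$. Since $E$ is stable with $c_1=-1$, the normalized bundle $E$ has $h^0(E)=0$, and a nonzero section of $E(1)$ gives a section of $E(1)$ vanishing in codimension two (stability rules out a divisorial zero locus when $c_1(E(1))=1$). I would then analyze the resulting sequence $0\to\oc\to E(1)\to\ic_X(1)\to 0$ with $X$ a curve of degree $c_2(E(1))=c_2+c_1+1=4-1+1=4$, compute its Hartshorne-Rao module or its genus, and check this against $b(E)=3$. The main obstacle is precisely this final numerical/geometric step: extracting a clean contradiction from the curve $X$, most likely by showing that the diameter of $H^1_*(\ic_X)$ (equivalently the spread of $H^1_*(E)$) is forced to exceed $3$, or by showing the degree-zero generator together with the section produces a nonzero product in $\mathfrak m^3\cdot H^1_*(E)$.

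Alternatively, and perhaps more cleanly, I would route the contradiction through the monad of Section \ref{sec-neg inst}: with $n=2$, $c=1$ the minimal monad \eqref{eq: min monad} and the diagram feeding Corollary \ref{C-rkJ=n} show that $c=n-1$ forces $rk(J)=n=2$ and $h^0(E(1))=1$, so everything is consistent so far — the nonvanishing $h^1(E(2))\neq 0$ of Lemma \ref{L-BanicaMano} must then be confronted with the annihilation condition $\mathfrak m^3 H^1_*(E)=0$. Concretely, the single degree-zero generator $\alpha$ must survive to degree $2$ under multiplication by $S^2V$, i.e. the map $S^2V\otimes\langle\alpha\rangle\to H^1(E(2))$ is nonzero; but $\alpha\in H^1(E)$ lies in the kernel of multiplication by a general $H$ (as $b(E)=3$ pushes the top of the module to degree $\leq 2$), and I would argue via Theorem \ref{T-res} (giving $h^0(E_H(1))\leq 2+c_1=1$) that this kernel is too small to support a two-step nonzero multiplication, forcing $h^1(E(2))=0$ and contradicting Lemma \ref{L-BanicaMano}. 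I expect the delicate point to be the bookkeeping of which multiplication maps are forced nonzero; the restriction bound $h^0(E_H(1))\leq 1$ is the lever that makes the counting tight.
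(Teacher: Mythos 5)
Your first half is correct and coincides with the paper's own reduction: assuming $b(E)=3$, Lemma \ref{L-c1=-1 first} makes $E$ a negative instanton with $n=2$; if $c=0$ the module is generated in degree $-1$, so Proposition \ref{P-generalities b(E)} forces $h^1(E(2))=0$, against Lemma \ref{L-BanicaMano}; hence $c\geq 1$, and Corollary \ref{C-pf Prop} gives $c=1$ and $h^0(E(1))=1$. From there on, however, you have a plan rather than a proof, and the mechanism you propose for the final contradiction is wrong. Your key claim in the second route --- that the degree-zero generator $\alpha\in H^1(E)$ lies in the kernel of multiplication by a general plane $H$ --- does not follow from $b(E)=3$: the Buchsbaum condition kills only threefold products $\mathfrak m^3\cdot H^1_*(E)$, so it says nothing about $H\cdot\alpha\in H^1(E(1))$. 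Moreover your stated goal, ``forcing $h^1(E(2))=0$,'' cannot be reached by any argument, since Lemma \ref{L-BanicaMano} asserts precisely that $h^1(E(2))\neq 0$; that lemma must be used positively, not contradicted by a vanishing. Your first route (the curve cut out by the section of $E(1)$) is left explicitly unfinished at the decisive step.

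The contradiction the paper actually extracts is a dimension count one twist higher. Let $W\subset H^1(E)$ be the image of $H^1(E(-1))\otimes V\to H^1(E)$; since $c=1$ and $h^1(E)=5$ (Riemann--Roch), $\dim W=4$. Because $h^0(E(1))=1=h^0(E_H(1))$ (Theorem \ref{T-res}), the restriction sequence gives an injection $H^1(E)\hookrightarrow H^1(E(1))$, so the image $W'$ of $W$ still has dimension $4$; and $b(E)=3$ places $W'$ (twofold products of linear forms with degree $-1$ classes) inside the kernel of multiplication $H^1(E(1))\to H^1(E(2))$. On the other hand, the section of $E(1)$ gives $\oc (1)\hookrightarrow E(2)$, hence $h^0(E(2))\geq 4$, hence $h^1(E(2))\geq 4$ by Lemma \ref{L-BanicaMano}; furthermore $h^1(E_H(2))=0$ (the zero scheme $Z$ of the section of $E_H(1)$ is four points not on a line, by Theorem \ref{T-res}, so it imposes independent conditions on conics), so that multiplication map is surjective and, since $h^1(E(1))=6$ by Riemann--Roch, its kernel has dimension $6-h^1(E(2))\leq 2$. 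This contradicts $\dim W'=4$. These quantitative steps --- the injection $H^1(E)\hookrightarrow H^1(E(1))$, the bound $h^0(E(2))\geq 4$, the vanishing $h^1(E_H(2))=0$, and the values $h^1(E)=5$, $h^1(E(1))=6$ --- are exactly the ``bookkeeping'' you deferred, and they are not routine: with only the facts you secured ($c=1$, $h^0(E(1))=1$, $h^1(E(2))\neq 0$, $\mathfrak m^3$-annihilation), the numbers $h^1(E(2))=2$ and $\dim\ker=4$ are perfectly consistent, so no contradiction can be reached at the level of generality where you stopped.
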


\begin{proof}From Lemma \ref{L-BanicaMano} it turns out that $d(E) \geq 4$. The module $H^1_*(E)$ is generated in degrees -1, 0. If there is no generator in degree 0 then by Proposition \ref{P-generalities b(E)}, $b(E)=d(E) > 3$. So we may assume that $H^1_*(E)$ has some generator of degree zero, i.e. (Corollary \ref{C-pf Prop}) one generator of degree zero. So the image, $W$, of $\mu : H^1(E(-1))\otimes V \to H^1(E)$ has dimension 4. Furthermore, always by Corollary \ref{C-pf Prop}, $h^0(E(1))=1$. If $H$ is a general plane we have $0 \to \oc _H \to E_H(1) \to \ic _Z(1) \to 0$, where $\deg (Z)=4$ and (Theorem \ref{T-res}) $h^0(\ic _Z(1))=0$. It follows that $h^1(\ic _Z(2))=h^1(E_H(2))=0$. Moreover the exact sequence $0 \to E \to E(1) \to E_H(1) \to 0$ yields an inclusion $H^1(E) \hookrightarrow H^1(E(1))$. Let $W' \subset H^1(E(1))$ be the image of $W$. The assumption $b(E)=3$ implies that $W'$ is contained in the kernel of $H^1(E(1)) \stackrel{.H}{\to} H^1(E(2))$. It follows that $h^0(E_H(2))\geq 4$. In conclusion we have: $\cdots \to H^0(E_H(2)) \to H^1(E(1)) \stackrel{\fG}{\to} H^1(E(2)) \to 0$ and $W' \subset Ker(\fG )$. By Riemmann-Roch $h^1(E(1))=6$. Since $h^0(E(1))=1$, we get $h^0(E(2))\geq 4$, hence (Lemma \ref{L-BanicaMano}), $h^1(E(2))\geq 4$. It follows that $\dim (Ker(\fG ))\leq 2$. This is a contradiction since $\dim W' =4$.
\end{proof}

Finally let's turn to the last case $c_2=6$. First we have:

\begin{lemma}
\label{L-coh ordre jline}
let $E$ be a stable rank two vector bundle with $c_1=-1$. Assume $h^1(E(3))=0$ and $h^1(E(-2))=0$. If $L$ is a line and if $E_L \simeq \oc _L(a)\oplus \oc _L(-a-1), a \geq 0$, then $a < 4$.
\end{lemma}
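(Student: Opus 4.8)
The plan is to restrict $E$ to the line $L$ and transfer the cohomology of $E_L$ back to $E$, exploiting the very narrow cohomological window forced by the two vanishing hypotheses. First I would record that window. Since $E$ is stable with $c_1=-1$ and $h^1(E(-2))=0$, it is a negative instanton (Definition \ref{D-neg instanton}), so its spectrum is $\{-1^{c_2/2},0^{c_2/2}\}$ and hence $h^1(E(k))=0$ for all $k\leq -2$. Because $c_1=-1$ we have $E\simeq E^*(-1)$, so Serre duality gives $h^2(E(k))=h^1(E(-3-k))$ and $h^3(E(k))=h^0(E(-3-k))$; combined with stability (which forces $h^0(E(j))=0$ for $j\leq 0$) this yields in particular $h^2(E(2))=h^1(E(-5))=0$ and $h^3(E(1))=h^0(E(-4))=0$.

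Next I would compute the cohomology of $E_L(3)$ directly on $L\simeq \Pun$. From $E_L\simeq \oc_L(a)\oplus \oc_L(-a-1)$ we get $E_L(3)\simeq \oc_L(a+3)\oplus \oc_L(-a+2)$, so $h^1(E_L(3))=h^1(\oc_L(-a+2))=\max(0,a-3)$. Thus if $a\geq 4$ then $h^1(E_L(3))\geq 1$, and it is exactly this positivity that I aim to contradict.

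The heart of the argument is to lift this nonvanishing to $E$. Tensoring $0\to \oc(-2)\to 2.\oc(-1)\to \ic_L\to 0$ by $E(3)$ shows that $H^2((E\otimes \ic_L)(3))$ sits in the exact sequence $2.H^2(E(2))\to H^2((E\otimes \ic_L)(3))\to H^3(E(1))$, and both flanking groups vanish by the first step, so $H^2((E\otimes \ic_L)(3))=0$. Feeding this into the restriction sequence $0\to E\otimes \ic_L\to E\to E_L\to 0$ twisted by $3$ gives $H^1(E(3))\to H^1(E_L(3))\to H^2((E\otimes \ic_L)(3))=0$; since $h^1(E(3))=0$ by hypothesis, the middle term $H^1(E_L(3))$ must vanish. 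This contradicts $h^1(E_L(3))\geq 1$, so necessarily $a\leq 3$, i.e. $a<4$.

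The only genuinely delicate point is the control of the higher cohomology in the first step: everything hinges on proving $H^2((E\otimes \ic_L)(3))=0$, which in turn rests on the self-duality $E\simeq E^*(-1)$ together with the propagation of $h^1(E(-2))=0$ to $h^1(E(k))=0$ for all $k\leq -2$ (the spectrum of a negative instanton). Once these vanishings are secured, the restriction sequence closes the argument at once, with no need to analyze the module structure of $H^1_*(E)$ or the minimal monad.
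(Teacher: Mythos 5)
Your proof is correct, and while it draws on the same toolkit as the paper, it runs the argument in the opposite direction. The paper assumes $a\geq 4$, observes that $h^0(E_L(-a))\neq 0$ while $h^0(E(-a))=0$ by stability, pushes this nonvanishing through the Koszul sequence $0 \to \oc (-2) \to 2.\oc (-1) \to \ic _L \to 0$ to get $h^2(E(-a-2))\neq 0$ (using $h^1(E(-1-a))=0$), and then dualizes to $h^1(E(a-1))\neq 0$; since $a-1$ can be strictly larger than $3$, it must finally invoke the connectedness theorem (Theorem \ref{T-Buraggina}) to derive a contradiction with $h^1(E(3))=0$. You instead work at the \emph{fixed} twist $3$: you push the vanishing hypotheses downward, proving $H^2(\ic _L\otimes E(3))=0$ from $h^2(E(2))=h^1(E(-5))=0$ and $h^3(E(1))=h^0(E(-4))=0$ (both resting on $E\simeq E^*(-1)$, the negative-instanton vanishing $h^1(E(k))=0$ for $k\leq -2$, and stability), and then the restriction sequence forces $h^1(E_L(3))=0$, contradicting the direct computation $h^1(E_L(3))=a-3\geq 1$ when $a\geq 4$. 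The two arguments are essentially Serre-dual to one another (note $h^1(\oc _L(2-a))=h^0(\oc _L(a-4))$), which is why the same vanishings appear on both sides; but working at a fixed twist is what lets you dispense with Buraggina's theorem entirely, a small but genuine simplification over the paper's proof. The only ingredient you should make explicit is the justification that $h^1(E(k))=0$ for all $k\leq -2$ (and not just at $k=-2$): as you indicate, this follows from the spectrum $\{-1^{c_2/2},0^{c_2/2}\}$ of a negative instanton, which the paper states as an equivalent characterization in Section \ref{sec-neg inst}; the paper itself uses this same fact silently when it asserts $h^1(E(-1-a))=0$.
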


\begin{proof} Assume $a \geq 4$ for some line $L$. The exact sequence $0 \to \ic _L\otimes E \to E \to E_L \to 0$ shows that $h^1(\ic _L\otimes E(-a))\neq 0$. Now consider the exact sequence:
$0 \to E(-2) \to 2.E(-1) \to \ic _L\otimes E \to 0$. Since $h^1(E(-1-a))=0$, from $h^1(\ic L\otimes E(-a))\neq 0$, we get $h^2(E(-a-2))\neq 0$. By duality $h^2(E(-a-2))=h^1(E(a-1))$. Since $a-1\geq 3$, this is impossible ($h^1(E(3))=0$ and Theorem \ref{T-Buraggina}).
\end{proof}

We can now complete the proof of Theorem \ref{T-the thm}:

\begin{proposition}
Let $E$ be a stable rank two vector bundle with $c_1=-1, c_2=6$. Then $b(E)>3$.
\end{proposition}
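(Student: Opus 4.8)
The goal is to rule out $b(E)=3$ for a stable bundle with $c_1=-1,\,c_2=6$, so the plan is to \emph{assume} $b(E)=3$ and derive a contradiction. By Lemma \ref{L-c1=-1 first} we already know $h^1(E(-2))=0$ and $h^1(E(3))=0$; thus $E$ is a negative instanton with $n=c_2/2=3$, and $H^1_*(E)$ is generated in degrees $\leq 0$, say with $c$ generators in degree $0$. Since $h^1(E(-2))=0=h^1(E(-3))$ (the latter by stability and $b=3$, Proposition \ref{P-generalities b(E)} (iv)), the whole module lives in degrees $-1,0,1$, and $b(E)=3$ forces $\frak m^3$ to kill it — automatic on this diameter. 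The real constraint comes from requiring $\frak m^2$ \emph{not} to kill the top piece, i.e.\ that the multiplication maps into $H^1(E(1))$ survive.

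First I would pin down $c$. By Corollary \ref{C-pf Prop} (2), for a negative instanton with $c_2=6$ we have $c\leq c_2/2-1=2$. I would treat the subcases $c=0,1,2$ according to how many degree-$0$ generators occur, using the shape of the minimal monad \eqref{eq: min monad}. If $c=0$ the module is generated in degree $-1$, so by Proposition \ref{P-generalities b(E)} (1) $d(E)=b(E)=3$, and I would then run a Riemann–Roch / Euler-characteristic computation: $h^1(E(3))=0$ gives $\chi(E(3))\geq 0$, while $b=3$ with degree $-1$ generation forces $h^1(E(2))=0$, hence $\chi(E(2))=h^0(E(2))-h^3(E(2))\geq 0$; comparing the two with $c_2=6$ should produce an inconsistency (this mirrors Lemma \ref{L-bd c2 c1=0}). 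For $c\geq 1$ the argument of the $c_2=4$ proposition transplants almost verbatim.

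The heart of the matter, as in the $c_2=4$ case, is a \emph{dimension count on the kernel} of a multiplication map. Let $W\subset H^1(E)$ be the image of $\mu:H^1(E(-1))\otimes V\to H^1(E)$; the $c$ degree-$0$ generators mean $W$ has codimension $c$ in $H^1(E)$. Because $b(E)=3$, for a general plane $H$ the composite $H^1(E)\xrightarrow{.H}H^1(E(1))\xrightarrow{.H}H^1(E(2))$ annihilates everything (it is $\frak m^2$ on generators of degree $-1$, which generate), and more to the point the image $W'$ of $W$ in $H^1(E(1))$ lands in the kernel of $.H:H^1(E(1))\to H^1(E(2))$. Via the restriction sequence $0\to E\to E(1)\to E_H(1)\to 0$ (note $H^1(E)\hookrightarrow H^1(E(1))$ since $h^0(E_H(1))$ controls the kernel and $E$ is stable), this kernel is bounded by $h^0(E_H(2))$, which I would compute from $0\to\oc_H\to E_H(1)\to\ic_Z(1)\to 0$ with $\deg Z=c_2=6$ and $h^0(\ic_Z(1))=0$ by Theorem \ref{T-res}. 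The inequality $\dim W'\leq\dim\ker(.H)$ together with the Riemann–Roch values of $h^1(E(1))$ and $h^1(E(2))$ for $c_2=6$ should be violated.

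\textbf{The main obstacle} I anticipate is twofold. First, Lemma \ref{L-coh ordre jline} has just been proved specifically to bound the splitting type ($a<4$) of $E$ on lines, so the intended route almost certainly uses it rather than (or in addition to) the bare $c_2=4$ argument: I expect one must choose a line $L$ (or plane $H$) on which the generic splitting is as balanced as Lemma \ref{L-coh ordre jline} permits, then run the Castelnuovo-type restriction/lifting sequences $0\to E(m-2)\to 2.E(m-1)\to \ic_L\otimes E(m)\to 0$ to force the relevant $H^1$ to propagate one step too far, contradicting $h^1(E(3))=0$ or $b=3$. Getting the numerics of $h^0(E_H(2))$ versus $h^1(E(1))-h^1(E(2))$ to close is the delicate part, since for $c_2=6$ the surplus is tighter than for $c_2=4$ and may require the sharper input from Lemma \ref{L-coh ordre jline} to eliminate a borderline splitting type on the generic line.
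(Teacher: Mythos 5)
Your reduction and your $c=0$ case are fine: with no degree-$0$ generators the module is generated in degree $-1$, so $b(E)=3$ forces $h^1(E(2))=0$, while $\chi(E(2))=-7<0$ for $c_1=-1$, $c_2=6$, a contradiction. (One slip in the setup: for $c\geq 1$ the module can live in degrees $-1,0,1,2$, not just $-1,0,1$, since Proposition \ref{P-generalities b(E)} (2) only gives $h^1(E(3))=0$ when there is a degree-$0$ generator; so killing by $\frak m^3$ is not ``automatic on this diameter''.)

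The genuine gap is the case $c\geq 1$, which is the heart of the matter, and your claim that the $c_2=4$ argument ``transplants almost verbatim'' is false. That argument's final contradiction ($\dim \mathrm{Ker}(\fG)\leq 2 < 4=\dim W'$) rests on two inputs with no analogue at $c_2=6$: the Banica--Manolache equality $h^0(E(2))=h^1(E(2))$, which is specific to $c_1=-1,c_2=4$, and the fact $h^0(E(1))=1$, which Corollary \ref{C-pf Prop} supplies for $c_2=6$ only when $c=2$. Moreover, with $\deg Z=6$ one can no longer deduce $h^1(\ic _Z(2))=0$ from $h^0(\ic _Z(1))=0$ (five collinear points plus one do not impose independent conditions on conics), which is precisely where the $c_2=4$ numerics break down — as you yourself suspect. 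But your fallback idea (balanced splitting on a \emph{generic} line plus Castelnuovo-type propagation against $h^1(E(3))=0$) is a guess, not a proof, and it is not the mechanism that works. The paper instead splits on $h^0(E_H(1))=0$ or $\neq 0$ for a general plane $H$ (not on $c$; the only use of $c\leq 2$ is $\dim W\geq 6$). In either case $b=3$ forces $h^0(E_H(2))\geq 6$ (resp. $\geq 5$), hence many sections of $\ic _Z(3)$ (resp. $\ic _Z(2)$), where $Z$ is the degree-$8$ (resp. degree-$6$) zero scheme of a section of $E_H(2)$ (resp. $E_H(1)$); this forces $7$ (resp. $5$) points of $Z$ to lie on a line, and restricting the section's exact sequence to that line yields $E_L\simeq \oc _L(5)\oplus \oc _L(-6)$ (resp. $\oc _L(4)\oplus \oc _L(-5)$), contradicting the bound $a<4$ of Lemma \ref{L-coh ordre jline}. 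That step — large $h^0$ of an ideal sheaf of plane points forces collinearity, hence an extremely \emph{unbalanced} splitting on a \emph{special} line — is the missing idea in your proposal.
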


\begin{proof} First observe that, by Corollary \ref{C-pf Prop}, the natural map $\mu :H^1(E(-1))\otimes V \to H^1(E)$ has an image, $W$, of dimension $\geq 6$.

Assume $h^0(E_H(1))=0$ for $H$ a general plane. Then we have $H^1(E) \hookrightarrow H^1(E(1))$. Let $W'$ denote the image of $W$. Now twisting by one we have $\cdots \to H^0(E_H(2)) \to H^1(E(1)) \stackrel{\fG}{\to} H^1(E(2))$. If $b(E)=3$, then $W' \subset Ker(\fG )$ and this implies $h^0(E_H(2)) \geq 6$. Since $h^0(E_H(1))=0$, $E_H(2)$ has a section vanishing in codimension two: $0 \to \oc _H \to E_H(2) \to \ic _Z(3) \to 0\,\,(+)$, where $\deg (Z)=8$. Since $h^0(\ic _Z(3))\geq 5$ and $h^0(\ic _Z(2))=0$, $Z$ has seven points lying on a line $L$. Restricting the exact sequence to $L$ we get $E_L(2) \twoheadrightarrow \oc _L(-4)$. It follows that $E_L \simeq \oc _L(5)\oplus \oc _L(-6)$. By Lemma \ref{L-coh ordre jline} this is impossible. hence $b > 3$ if $h^0(E_H(1))=0$.

Assume $h^0(E_H(1))\neq 0$. Then we have $0 \to \oc _H \to E_H(1) \to \ic _Z(1) \to 0\,\,(*)$, where $\deg (Z)=6$. By Theorem \ref{T-res}, $h^0(\ic _Z(1))=0$. With notations as above $W' \subset H^1(E(1))$ has $\dim W' \geq 5$, hence $h^0(E_H(2)) \geq 5$. This implies $h^0(\ic _Z(2)) \geq 2$. Since $h^0(\ic _Z(1))=0$, it follows that $Z$ has 5 points on a line $R$. Restricting $(*)$ to $R$ we get: $E_R(1) \twoheadrightarrow \oc _R(-4)$. it follows that $E_R \simeq \oc _R(4)\oplus \oc _R(-5)$; in contradiction with Lemma \ref{L-coh ordre jline}. So $b>3$ again.
\end{proof}



\end{document}